\newtheorem{theorem}{Theorem}[section]
\newtheorem{lemma}[theorem]{Lemma}
\theoremstyle{definition}
\numberwithin{subcase}{case}
\theoremstyle{definition}
\newtheorem{remark}[]{\textbf{Remark}}
\numberwithin{equation}{section}
\begin{document}
\title{Subconvexity for $GL(3)\times GL(2)$ and $GL(3)$ $L$-functions in $GL(3)$ spectral aspect}
\author{Prahlad Sharma \vspace{-1cm}}
\address{School of Mathematics, Tata Institute of Fundamental Research, Mumbai}
\email{prahlad@math.tifr.res.in}
\maketitle	
\begin{abstract}
Let $f$ be a $SL(2,\mathbb{Z})$ holomorphic cusp form or the Eisenstien series $E(z,1/2)$  and $\pi$ be a $SL(3,\mathbb{Z})$ Hecke-Maass cusp form with its Langlands parameter $\mu$ in generic position i.e. away from Weyl chamber walls and away from self dual forms. We study an amplified second moment $\sum_{j} A(\pi_j)|L(1/2,\pi_j\times f)|^2$ and deduce the subconvexity bound
\begin{equation*}
L(1/2,\pi\times f)\ll_{f,\epsilon} \|\mu\|^{3/2-1/2022+\epsilon}.
\end{equation*} 
As a corollary, when $f=E(z,1/2)$, we also obtain the subconvexity bound
\begin{equation*}
L(1/2,\pi)\ll_{\epsilon} \|\mu\|^{3/4-1/4044+\epsilon}.
\end{equation*}
\end{abstract}
\vspace{8mm}
\section{Introduction}
Let $f$ be a $SL(2,\mathbb{Z})$ holomorphic cusp form with the $n$-th fourier coefficient $\lambda_f(n)$. For an $SL(3,\mathbb{Z})$ Hecke Maass cusp form $\pi$, we denote its Langlands parameter by $\mu=\mu_{\pi}=(\mu_1,\mu_2,\mu_3)$. This triplet satisfies $\mu_1+\mu_2+\mu_3=0$, normalised such that Ramanujan predicts $\mu\in(i\mathbb{R})^3$. Let $\pi_0$ be such that its Langlands parameter $\mu_0=(\mu_{0,1},\mu_{0,2},\mu_{0,3})$ is in generic position, i.e. there exists constants $C>c>0$ such that
\begin{equation}
c\leq\frac{|\mu_{0,j}|}{\|\mu_0\|}\leq C\,\,\,(1\leq j\leq 3),\,\,\,\hbox{and}\,\,\, c\leq\frac{|\mu_{0,j}-\mu_{0,i}|}{\|\mu_0\|}\leq C\,\,\,(1\leq j\leq 3),
\end{equation}
Note that for a suitable choice of $c$ and $C$, this set cover $99\% $ of Maass forms. The Rankin-Selberg product of $\pi_0$ with $f$ is given by
\begin{equation}
L(s,\pi_0\times f)= \mathop{\sum\sum}_{r,n=1}^{\infty}\frac{A_{\pi_0}(r,n)\lambda_{f}(n)}{(nr^2)^s}\,,
\end{equation}
which converges absolutely for $\Re(s)>1$. This series extends to a entire function and satisfies functional equation of the Reimman type and has conductor $\|\mu_0\|^{6}$. Consequently, the Phragmen-Lindelof principle yeilds the convexity bound
\begin{equation*}
L(1/2,\pi_0\times f)\ll_{\pi,f,\epsilon}\|\mu_0\|^{\frac{3}{2}+\epsilon}.
\end{equation*} The Lindel\"{o}f hypothesis asserts that the exponent $3/2+\epsilon$ can be replaced by any positive number. In this paper we prove the following subconvex bound.
\begin{theorem}\label{subcon}
Let $\pi_0$ be an $SL(3,\mathbb{Z})$ Hecke-Maass cusp form with Langlands parameter $\mu_0$ in generic position. Let $f$ be a $SL(2,\mathbb{Z})$ holomorphic Hecke cusp form or the Eisenstein series $E(z,1/2)$. Then
\begin{equation}
L(1/2,\pi_0\times f)\ll_{f,\epsilon}\|\mu_0\|^{3/2-1/1960+\epsilon}\,.
\end{equation}
\end{theorem}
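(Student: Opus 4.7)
The approach is the amplified second moment indicated in the abstract. I would set up
\begin{equation*}
\mathcal{M} \;:=\; \sum_{j}\, \omega_j\, A(\pi_j)\, |L(1/2,\pi_j\times f)|^2,
\end{equation*}
where $\pi_j$ runs over $SL(3,\mathbb{Z})$ Hecke-Maass cusp forms with Langlands parameter in a short generic window around $\mu_0$, $\omega_j\geq 0$ is a Kuznetsov harmonic weight normalised so that $\omega_{\pi_0}\gg \|\mu_0\|^{-\epsilon}$, and
\begin{equation*}
A(\pi_j) \;=\; \Big|\sum_{\ell\sim L,\ \ell\text{ prime}} x_\ell\, \lambda_{\pi_j}(\ell)\Big|^2
\end{equation*}
is a standard Rankin-Selberg amplifier with $A(\pi_0)\gg L^{2-\epsilon}$. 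Positivity gives $|L(1/2,\pi_0\times f)|^2 \ll \mathcal{M}/A(\pi_0)$, so the target is an estimate $\mathcal{M} \ll L^{2}\|\mu_0\|^{3-2\delta+\epsilon}$ with $L=\|\mu_0\|^\eta$ a small positive power, which produces the subconvex exponent $3/2-\delta$.

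The first step is to apply the approximate functional equation, expressing each $|L(1/2,\pi_j\times f)|^2$ as a smoothly-truncated double sum of length $\|\mu_j\|^{3+\epsilon}$ in the variables $nr^2$. Absorbing the amplifier factors $\lambda_{\pi_j}(\ell_1)\overline{\lambda_{\pi_j}(\ell_2)}$ into the Rankin-Selberg coefficients via $GL(3)$ Hecke multiplicativity, $\mathcal{M}$ becomes a fourfold arithmetic sum weighted by the spectral average $\sum_j \omega_j A_{\pi_j}(r'_1,n'_1)\overline{A_{\pi_j}(r'_2,n'_2)}$. This average is evaluated by the $GL(3)$ Kuznetsov trace formula, producing a diagonal main term of essentially convexity size together with off-diagonal contributions attached to each non-identity Weyl element and supported on $GL(3)$ Kloosterman sums against Bessel-type integral transforms depending on $\mu_0$.

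The heart of the argument is the estimation of the off-diagonal. For each Weyl contribution, the plan is to dualise both sides: apply $GL(3)$ Voronoi summation in the variables tied to the spectral side to shorten those sums, apply $GL(2)$ Voronoi summation (or Poisson, in the Eisenstein case) to the $\lambda_f(n)$ sum, and then apply Cauchy-Schwarz in a carefully chosen outer variable to open the resulting square and handle the inner sum by a further Poisson summation. What remains is a weighted Kloosterman sum multiplied by an oscillatory integral to be analysed by stationary phase; the genericity hypothesis on $\mu_0$ (away from Weyl chamber walls and self-dual points) enters decisively here to keep the phase of the Bessel transform non-degenerate and of predictable size $\|\mu_0\|$.

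The main obstacle will be the contribution of the long Weyl element, for which the $GL(3)$ Bessel kernel has the most intricate oscillatory structure and the dual sums remain the longest, so that one must extract genuine cancellation from the stationary-phase integral rather than rely on its absolute bound. A careful balancing of the amplifier length $L$ against the saving from the off-diagonal, together with tracking the arithmetic structure through $GL(3)$ Voronoi, whose dual is not jointly multiplicative in $(r,n)$, will fix the numerical exponent $1/1960$ and determine the efficiency of the final Cauchy-Schwarz step.
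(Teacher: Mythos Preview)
Your overall strategy is right up through the Kuznetsov step, but your plan for the off-diagonal has a structural misconception. Once the $GL(3)$ Kuznetsov formula is applied to $\sum_j \omega_j A_{\pi_j}(r_1',n_1')\overline{A_{\pi_j}(r_2',n_2')}$, there are no $GL(3)$ Fourier coefficients left on the geometric side: the output is a sum over moduli $(D_1,D_2)$ of $SL_3$ Kloosterman sums times the integral transforms $\Phi_{w}$. So there is nothing to which one could apply ``$GL(3)$ Voronoi summation in the variables tied to the spectral side''. The only arithmetic sums that survive and carry free variables are the $m$ and $n$ sums weighted by $\lambda_f$, and it is to these that $GL(2)$ Voronoi is applied. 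There is no Cauchy--Schwarz/Poisson step in the paper's argument either; the saving mechanism is different from the Munshi delta-method template you seem to be invoking.

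What actually drives the off-diagonal estimate is the following. First, by size considerations only the long Weyl element $w_6$ survives. Second, the long Weyl Kloosterman sum is decomposed (via a result of Kiral--Nakasuji) into a finite sum of products $S(m,\ast;d_1)S(n,\ast;d_2)$ of classical Kloosterman sums. Applying $GL(2)$ Voronoi in $m$ and $n$ then converts each of these into a \emph{Ramanujan} sum, giving a full saving of $d_1d_2$ rather than $(d_1d_2)^{1/2}$ --- this is the key arithmetic input and is absent from your sketch. Third, the remaining work is a hard stationary-phase analysis of the two-variable Fourier transform $\mathcal{K}(\theta_1,\theta_2,U,V)$ of the $w_6$ kernel, carried out through the Bessel-function integral representations of $K_{w_6}$; genericity of $\mu_0$ is used there. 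The exponent $1/1960$ comes from balancing the amplifier length $L$ against the power of $T$ saved in this integral analysis (roughly $T^{-1/280}$ in the off-diagonal), not from a Cauchy--Poisson endgame. Your amplifier also needs the three-power variant $\sum_{j=1}^3|\sum_\ell A_\pi(1,\ell^j)x(\ell^j)|^2$ to guarantee $A(\pi_0)\gg L^{2-\epsilon}$ via the Hecke relation, since $|\lambda_{\pi_0}(\ell)|$ alone need not be $\gg 1$.
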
Note that when $f(z)=E(z,1/2)$,
\begin{equation}
L(s,\pi_0\times f)=\mathop{\sum\sum}_{r,n=1}^{\infty}\frac{A_{\pi_0}(r,n)d(n)}{(nr^2)^s}=\left(\sum_{m=1}^{\infty}\frac{A_{\pi_0}(1,m)}{m^s}\right)^2=(L(s,\pi_0))^2.
\end{equation}Hence, as corollary we obtain following, which an improvement over the subconvex bound obtained in Blomer-Buttcane \cite{Bl1}.
\begin{theorem}\label{•}
Let $\pi_0$ be an $SL(3,\mathbb{Z})$ Hecke-Maass cusp form with Langlands parameter $\mu_0$ in generic position. Then
\begin{equation}
L(1/2,\pi_0)\ll_{\epsilon}\|\mu_0\|^{3/4-1/3920+\epsilon}.
\end{equation}
\end{theorem}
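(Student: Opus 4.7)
The plan is to deduce the theorem directly from Theorem \ref{subcon} by specialising to the Eisenstein case $f = E(z, 1/2)$. The crucial observation, already recorded in the introduction, is the factorisation
\[
L(s, \pi_0 \times E(\cdot, 1/2)) = L(s, \pi_0)^2.
\]
This identity is visible directly from the Dirichlet series for the Rankin-Selberg $L$-function once one uses that the Hecke eigenvalues of $E(\cdot, 1/2)$ are the divisor function $d(n)$; it also reflects the representation-theoretic fact that the (non-tempered) principal series attached to $E(\cdot, 1/2)$ is the isobaric sum of two trivial characters, so that the Rankin-Selberg $L$-function splits as $L(s, \pi_0) \cdot L(s, \pi_0)$.

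With this identity in hand, the argument becomes a one-liner. I would apply Theorem \ref{subcon} with $f = E(\cdot, 1/2)$, a choice explicitly permitted by its hypotheses, to obtain
\[
|L(1/2, \pi_0)|^2 = \bigl|L(1/2, \pi_0 \times E(\cdot, 1/2))\bigr| \ll_\epsilon \|\mu_0\|^{3/2 - 1/1960 + \epsilon}.
\]
Extracting square roots and observing the arithmetic identity $(3/2 - 1/1960)/2 = 3/4 - 1/3920$ delivers the claimed subconvex bound.

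There is essentially no analytic obstacle in this deduction; all of the work has been packaged into Theorem \ref{subcon}. The only point worth checking is that $E(\cdot, 1/2)$ is an admissible choice of "$f$" in the main theorem, which is immediate from its statement. The implicit dependence of the constant on $f$ is harmless once $f$ is fixed to be the standard Eisenstein series, and the factor-of-two loss in the exponent of the saving is a purely arithmetic artefact of taking a square root, not of any additional analytic sacrifice.
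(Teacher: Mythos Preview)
Your proposal is correct and is precisely the deduction the paper makes: it records the factorisation $L(s,\pi_0\times E(\cdot,1/2))=L(s,\pi_0)^2$ in the introduction and states the theorem as an immediate corollary of Theorem~\ref{subcon}, with the exponent halved from $3/2-1/1960$ to $3/4-1/3920$ upon taking the square root.
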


Subconvexity for degree three $L$-functions was first obtained by X. Li \cite{Li2} for a fixed self dual Maass form. In his series of papers  \cite{munshi2},\cite{munshi7},\cite{munshi8},\cite{munshi9},\cite{munshi10}, Munshi introduced a different approach to subconvexity through which he obtained subconvex bounds for more general degree three $L$-functions. In \cite{munshi1}, Munshi adapted his new apporach to obtain $t$-aspect subconvexity  for $GL(3)\times GL(2)$ $L$-functions, where the $GL(3)$ form is any Hecke-Maass cusp form for $SL(3, \mathbb{Z})$. Using Munshi's approach, Kumar-Mallesham-Singh \cite{kms} obtained subconvexity  for $GL(3)\times GL (2)$ in the $GL(3)$ spectral aspect, where the Langlands parameter lie in a region complementary to ours. Similar results are available for twists by Dirichlet characters \cite{Bltwist, sharma1}. In a recent breakthrough preprint, P. Nelson \cite{nelson}  established subconvexity for all `standard' $GL(n)$ $L$-functions in the spectral aspect having `uniform parameter growth'. His work is motivated by the fundamental work of Michel and Venkatesh \cite{venkatesh}.

Subconvexity using the spectral theory of $GL(3)$ automorphic forms was first obtained by Blomer-Buttcane \cite{Bl1}. We apply this spectral theory to the Ranking--Selberg product of a $GL(3)$ and a $GL(2)$ form. The main tool is the $GL(3)$ Kuznetsov formula as developed in \cite{Bl2} applied to the second moment of the Ranking--Selberg product $L(1/2, \pi \times f )$. Note that Theorem \ref{subcon} considerably improves upon the bound of Blomer-Buttcane and further generalizes to all $GL(3)\times GL(2)$ $L$-functions in a single shot. The main technical difficulty in this paper (as was in \cite{Bl1}) is the analysis of the multi-dimensional oscillatory integral arising from the Fourier transform of the integral kernel \eqref{58}. In \cite{Bl1}, with the help of the Mellin-Barnes representation for the integral kernels,  this analysis was reduced to obtaining non-trivial bounds for certain two-dimensional oscillatory integrals to which they apply, among other things, Morse theory in the form of a theorem of Milnor and Thom. Here we take a different route and proceed using the Bessel function representation (see \eqref{522}-\eqref{525}) for the integral kernels, which seems to fit well with the $GL(2)$ coefficients. We are then led to studying Fourier transforms of Bessel functions with a certain non-linear twist which we provide in the appendix. We finally reduce our problem to obtaining a non-trivial bound for a one-dimensional oscillatory integral with essentially a degree 13 polynomial as the phase function. At this stage, just a simple application of a Van der Corput type of lemma serves as the endgame. 

An important feature in the  $GL(3)\times GL(2)$ structure is that it allows us to achieve a crucial saving when the Kloosterman sums arising from the Kuznetsov formula get transformed into Ramanujan sum after the application of the $GL(2)$ Voronoi formula. However, this saving becomes ineffective when the corresponding moduli are unbalanced. Our analysis of the integral kernel takes care of this by gaining more in the ``analytic part" in this case. The handling of the long Weyl Kloosterman sum becomes more subtle in our case (as compared to \cite{Bl1}) due to the coprimality conditions in the $GL(2)$ Voronoi formula. We handle this by using a recent result of Kiral and Nakasuji \cite{kiral}, which gives a representation of the long Weyl Kloosterman sum as a finite sum of a product of two classical Kloosterman sums. Another feature of the $GL(3)\times GL(2)$ set-up is that off-diagonal after the Kuznetsov formula is reduced to only the long Weyl element, the others contributing negligibly small due to size considerations.

\section{Sketch}
We are interested in evaluating the second moment
\begin{equation}\label{2.1}
\sum_{\pi_j=\mu_0+O(1)}|L\left(1/2,\pi_j\times f\right)|^2,
\end{equation}which contains about $T^3$ terms. If we can show the off-diagonal term is $\ll T^{3-\delta}$ for some $\delta>0$, then an amplification to the above sum will establish subconvexity. For simplicity, we suppress the amplification part in the sketch. By the approximate functional equation, we roughly have 
\begin{equation}
|L\left(1/2,\pi_j\times f\right)|^2\approx T^{-3}\sum_{m\asymp T^3}\sum_{n\asymp T^3}\lambda_f(m)\overline{\lambda_f(n)} A_{\pi_j}(1,m)\overline{A_{\pi_j}(1,n)}\,.
\end{equation}Ignoring the positive contribution from the continuous spectrum, we  then apply the Kuznetsov formula to the $\mu_j$ sum in \eqref{2.1}. The only non-negligible contribution in the off diagonal comes from the long Weyl element in the Kuznetsov formula which is of the form
\begin{equation}\label{2.3}
T^{-3}\sum_{m,n\asymp T^3}\lambda_f(m)\overline{\lambda_f(n)}\sum_{D_1,D_2}\frac{S(m,1,1,n;D_1,D_2)}{D_1D_2}\Phi_{w_6}\left(\frac{mD_2}{D_1^2},\frac{nD_2}{D_1^2}\right)\,,
\end{equation}where $S(m_1,m_2,n_1,n_2;D_1,D_2)$ is a certain $SL_3$ Kloosterman sum and $\Phi_{w_6}$ is an integral transform of the form
\begin{equation}
\Phi_{w_6}(y_1,y_2)=\int_{\mu=\mu_0+O(1)}K(y_1,y_2;\mu)\hbox{spec}(\mu
)d\mu\,,
\end{equation}where $\hbox{spec}(\mu)d\mu\approx \|\mu\|^3d\mu $ is the spectral measure and $K$ is the kernel function of the $GL(3)$ Kuznetsov transform, an analogue of a Bessel $K_{2it}$ or $J_{2it}$ function. Lemma 5 of \cite{Bl1} gives a representation of this kernel as an integral over a product of two Bessel functions. This formula suggests that the typical size of $K$ is $T^{3/2}$; each Bessel function saves $T^{1/2}$, and the $u$ -integral saves $T^{1/2}$ by stationary phase. Hence the size of $\Phi_{w_6}(y_1,y_2)$ is roughly $T^{3/2}$ and Lemma \ref{5.2} suggests that  it oscillates like $e(y_1^{1/2})e(y_2^{1/2})$. Lemma \ref{5.2} also restricts $D_1,D_2\ll T$. We then execute the $m$ and $n$ sum using the Voronoi summation formula. Note that for $(D_1,D_2)=1$, we have 
\begin{equation}\label{2.4}
S(m,1,1,n;D_1,D_2)=S(m,D_2;D_1)S(n,D_1;D_2)\,,
\end{equation}and for the general case, Theorem \ref{Kiral} allows us to treat it in a similar fashion as we do for the coprime case \eqref{2.4}. Proceeding with \eqref{2.4}, we execute the $m$ sum using Voronoi summation which has conductor $D_1^2T^2$. Hence the dual sum after Voronoi is roughly bounded by $D_1T$. Note that the Klooserman sum $S(m,D_2;D_1)$ gets transformed into Ramanujan sum after the application of Voronoi summation and hence we save the whole modulus $D_1$ instead of just $D_1^{1/2}$ in the Kloosterman sum. This is a crucial point in the proof. The $n$ sum is handled similarly. Now taking the absolute values of the dual sums and executing the remaining sum trivially we see that \eqref{2.3} is bounded by $T^{-3}\times T^2\times T^2\times T^{3/2}=T^{(3-1/2)}$  and we win.  

We should however admit that the above sketch barely touches the heart of the matter, and much more happens while furnishing the details. Firstly, for the case $f=E(z,1/2)$, there is an additional main term of order $T^3$ in the off-diagonal of the Kuznetsov formula coming from the zero frequencies of the Voronoi summation for $d(n)$ and $d(m)$. We calculate the main term explicitly and save in the $L$-aspect of the amplifier. As observed in \cite{Bl1}, this follows from the existence of a zero in the Mellin transform of the Kuznetsov kernel, which becomes apparent only after some non-trivial manipulations. Lastly and most importantly, the desired square root savings in $\Phi_{w_6}$ by stationary phase are very hard to show due to the complexity of algebraic equations defining the stationary points. In Theorem \ref{average}, we manage to obtain an upper bound of $T^{2-1/140}$  on an average instead of our idealistic estimate $T^{3/2}$. With this change of bound, our bound for the off-diagonal contribution becomes $T^{3-1/140}$.

\section{Preliminaries}
We recall some basic results which we require in the upcoming sections.
\begin{lemma}[\textbf{GL(2) Voronoi summation formula.}]Let $\lambda_{f}(n)$ be the fourier coefficients of a holomorphic cusp form $f$ with weight $k_f$. Let $h$ be a compactly supported smooth function on the interval $(0,\infty)$. Let $q>0$ an integer and $a\in\mathbb{Z}$ be such that $(a,q)=1$. Then we have

\begin{equation}\label{•}
\sum_{n=1}^{\infty}\lambda_{f}(n)e\left(\frac{an}{q}\right)h(n)=\frac{1}{q}\sum_{n=1}^{\infty}\lambda_{f}(n)e\left(\frac{-\overline{a}n}{q}\right)H_{f}\left(\frac{n}{q^2}\right)\,,
\end{equation}
where
\begin{equation*}
\begin{aligned}
& H_f(y)=2\pi i^{k_f}\int_{0}^{\infty}h(x)J_{k_f-1}(4\pi\sqrt{xy})dx 
\end{aligned}
\end{equation*} 
and for $\lambda_f(n)=d(n)$ we have
\begin{equation}
\begin{aligned}
&\sum_{n=1}^{\infty}d(n)e\left(\frac{an}{q}\right)h(n)=\frac{2}{q}\int_{0}^{\infty}\left(\log \frac{\sqrt{x}}{q}+\gamma\right) h(x)\,dx\\
\frac{1}{q}\sum_{n=1}^{\infty}d(n)\int_{0}^{\infty}&\left(-2\pi e\left(\frac{-\overline{a}n}{q}\right)Y_0\left(\frac{4\pi\sqrt{nx}}{q}\right)+4e\left(\frac{\overline{a}n}{q}\right)K_0\left(\frac{4\pi\sqrt{nx}}{q}\right)\right)h(x)\,dx\,.
\end{aligned}
\end{equation}
\end{lemma}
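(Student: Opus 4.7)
The plan is to derive both formulae via Mellin inversion combined with the functional equation of the additively twisted $L$-function. Let $\tilde h(s)=\int_0^\infty h(x) x^{s-1}\,dx$; since $h$ is smooth and compactly supported, $\tilde h$ is entire and of rapid decay in vertical strips. Mellin inversion gives $h(n)=\frac{1}{2\pi i}\int_{(\sigma)}\tilde h(s)\,n^{-s}\,ds$, and for $\sigma>1$ the $n$-sum and $s$-integral may be swapped to yield
\begin{equation*}
\sum_{n=1}^{\infty}\lambda_f(n)\, e\!\left(\frac{an}{q}\right) h(n) \;=\; \frac{1}{2\pi i}\int_{(\sigma)} \tilde h(s)\, L(s,f,a/q)\, ds,
\end{equation*}
where $L(s,f,a/q)=\sum_{n\geq 1}\lambda_f(n)\, e(an/q)\, n^{-s}$ is the additively twisted $L$-series.

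The key input is the functional equation for $L(s,f,a/q)$. In the holomorphic cuspidal case with weight $k_f$ and level one, the standard Hecke-type argument (applying the modular transformation law of $f$ to a suitable $\gamma\in SL_2(\mathbb{Z})$ that sends $a/q$ to $-\overline{a}/q$) produces
\begin{equation*}
L(s,f,a/q) \;=\; i^{k_f}\, q^{1-2s}\, \frac{\Gamma(1-s+(k_f-1)/2)}{\Gamma(s+(k_f-1)/2)}\, L(1-s,f,-\overline{a}/q).
\end{equation*}
Substituting this and shifting the contour to $\Re(s)=-\sigma$ (no poles are crossed since $f$ is cuspidal), then expanding $L(1-s,f,-\overline{a}/q)$ as a Dirichlet series on the shifted line where it converges absolutely, one arrives at the dual sum
\begin{equation*}
\frac{1}{q}\sum_{n=1}^{\infty}\lambda_f(n)\, e\!\left(\frac{-\overline{a} n}{q}\right) H_f\!\left(\frac{n}{q^2}\right).
\end{equation*}
The kernel $H_f(y)$ emerges as an inverse Mellin transform whose shape matches the classical Mellin pair $\int_0^\infty J_{\nu}(4\pi\sqrt{xy})\, x^{s-1}\,dx = (2\pi)^{-2s} y^{-s}\,\Gamma(s+\nu/2)/\Gamma(1-s+\nu/2)$, identifying $H_f(y)=2\pi i^{k_f}\int_0^\infty h(x)\, J_{k_f-1}(4\pi\sqrt{xy})\,dx$.

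For the Eisenstein case $\lambda_f(n)=d(n)$ the procedure is identical, except that now $\sum_n d(n)\,e(an/q)\,n^{-s}$ has a double pole at $s=1$ coming from $\zeta(s)^2$. The contour shift therefore picks up a residue, and a direct computation using the Laurent expansions of $\tilde h$ and of the twisting gamma factor at $s=1$ (the Euler--Mascheroni constant $\gamma$ appears precisely from the derivative of the gamma ratio there) produces the main term $\frac{2}{q}\int_0^\infty\bigl(\log\frac{\sqrt{x}}{q}+\gamma\bigr) h(x)\,dx$. The dual sum is then handled via the Mellin pairs $\int_0^\infty K_0(x)\, x^{s-1}\,dx = 2^{s-2}\Gamma(s/2)^2$ and $\int_0^\infty Y_0(x)\, x^{s-1}\,dx = (2^{s-1}/\pi)\cos(\pi s/2)\Gamma(s/2)^2$; a reflection-formula identity then recombines the two contributions coming from the functional equation of the twisted $\zeta(s)^2$ into the stated combination of $-2\pi Y_0$ and $4 K_0$.

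The main obstacle is the careful bookkeeping of the gamma factors: one must establish the functional equation of $L(s,f,a/q)$ with the correct phase $i^{k_f}$ and then match the resulting gamma ratio to the Mellin transform of the appropriate Bessel kernel. The only additional subtlety in the divisor case is isolating the residue at $s=1$ correctly and producing both Bessel kernels from a single functional equation via a trigonometric decomposition; the rest of the argument is routine Mellin--Barnes manipulation.
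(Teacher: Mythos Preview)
Your proposal is correct in outline; the Mellin-inversion plus functional-equation route you sketch is the standard derivation of the Voronoi formula. The paper itself does not give a proof but simply refers the reader to Appendix~A.4 of \cite{kmv}, where precisely this argument is carried out, so there is nothing to compare beyond noting that you have reproduced the expected content of that citation.
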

\begin{proof}
See appendix A.4 of \cite{kmv}.
\end{proof}
We need the following asymptotics for the Bessel functions to extract the oscillation (see \cite{watson}, p. 206).
\begin{lemma}For $y>0$, the Bessel functions $J_{\nu}(y)$ and $K_{i\nu}(y)$, ($\nu\in \mathbb{R}$) satisfy the following oscillatory behaviour 
\begin{equation}\label{bes} 
J_{\nu}(y)=e^{iy}P_{ \nu}(y)+e^{-iy}Q_{ \nu}(y)\,\,\,\hbox{and}\,\,\,\, \left|y^kK_{i\nu}^{(k)}(y)\right|\ll_{k,\nu}\frac{e^{-y}}{\sqrt{y}}\,,
\end{equation}where the function $P_{ \nu}(y)$ (and similarly $Q_{ \nu}(y)$) satisfies 
\begin{equation}\label{besselweight}
y^jP_{\nu}^{(j)}(y)\ll_{j,\nu}\frac{1}{\sqrt{y}}.
\end{equation}
\end{lemma}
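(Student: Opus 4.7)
The lemma is classical, and my plan is to derive it from Hankel's asymptotic expansion and the integral representation of $K_{i\nu}$; no genuinely new argument is needed, only the task of putting standard material into the stated form.

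For $J_\nu(y)$, I would write $J_\nu = \tfrac{1}{2}(H_\nu^{(1)} + H_\nu^{(2)})$ and invoke Hankel's expansion
\[
H_\nu^{(\pm)}(y) = \sqrt{\tfrac{2}{\pi y}}\, e^{\pm i(y - \nu\pi/2 - \pi/4)}\, W_\pm(y,\nu),
\]
valid for $y$ bounded away from zero, with $W_\pm(y,\nu)$ smooth in $y$ and admitting a full asymptotic expansion $1 + O(1/y)$ in descending powers of $y$. Regrouping the non-$e^{\pm iy}$ factors into $P_\nu$ and $Q_\nu$ gives the decomposition $J_\nu(y) = e^{iy}P_\nu(y) + e^{-iy}Q_\nu(y)$ on $[1,\infty)$, and differentiating term by term yields $y^j P_\nu^{(j)}(y) \ll 1/\sqrt{y}$, since each derivative costs one factor of $y$ from both the prefactor $y^{-1/2}$ and the series $W_\pm$. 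For $y \in (0,1)$, and for the integer values of $\nu$ relevant to the paper (coming from the weight $k_f - 1$ of the holomorphic cusp form), $J_\nu$ and its derivatives are smooth and bounded, so a smooth partition of unity at $y = 1$ extends $P_\nu, Q_\nu$ to all of $(0,\infty)$ without damaging the bound.

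For $K_{i\nu}$ I would work from the integral representation
\[
K_{i\nu}(y) = \int_0^\infty e^{-y\cosh t}\cos(\nu t)\,dt
\]
and apply Laplace's method at the saddle $t=0$. The substitution $u = \cosh t - 1$ localises the main contribution to $t \asymp y^{-1/2}$, giving $K_{i\nu}(y) = \sqrt{\pi/(2y)}\,e^{-y}(1 + O_\nu(1/y))$ for $y \gg 1$, with the tail $t \geq y^{-1/2}$ exponentially smaller. Differentiating under the integral sign, each $\partial_y$ brings down a factor $-\cosh t = -1 + O(t^2)$ near the saddle, so the same Laplace analysis gives $y^k K_{i\nu}^{(k)}(y) \ll e^{-y}/\sqrt{y}$. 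For small $y$ the right-hand side diverges while $y^k K_{i\nu}^{(k)}(y)$ is easily bounded using the small-$y$ expansion of $K_{i\nu}$ in powers of $y^{\pm i\nu}$, so the inequality is automatic. The only mildly delicate point is uniformity in $\nu$, which the statement allows the implicit constants to absorb, so no real obstacle arises.
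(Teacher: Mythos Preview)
Your proposal is correct and aligns with the paper's treatment: the paper does not give its own proof but simply cites Watson's treatise (p.~206), which is precisely Hankel's asymptotic expansion that you invoke. Your sketch via $H_\nu^{(1)}, H_\nu^{(2)}$ and Laplace's method for $K_{i\nu}$ is the standard derivation underlying that reference.
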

We will frequently encounter integrals of the form 
\[ I=\int w(t)e(h(t))dt\,,\]
where $w$ is a smooth function supported on $[a,b]$. The  next two lemma gives asymptotics of $I$ depending on the stationary point of $h(t)$ (see \cite{bky}, Section 8).
\begin{lemma}\label{s1}
Let $Y\geq 1, X,Q,U,R >0$, and suppose $w(t)$ and $h(t)$ satisfies 
\begin{equation}
w^{(j)}(t)\ll_{j}XU^{-j}
\end{equation}and
\begin{equation}
|h'(t)|\geq R \,\,\,\hbox{and}\,\,\,\,\, h^{(j)}(t)\ll_{j} YQ^{-j},\,\,\,\,\,\, j=2,3,4,...
\end{equation}then we have
\begin{equation}
I\ll_{A} (b-a)X\left( (QR/\sqrt{Y})^{-A}+(RU)^{-A}\right)\,.
\end{equation}
\end{lemma}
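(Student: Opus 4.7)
The lemma is a standard non-stationary phase bound, and my plan is to prove it by iterated integration by parts. Define the differential operator
\[
(Df)(t) := \frac{d}{dt}\!\left( \frac{f(t)}{2\pi i\, h'(t)} \right).
\]
Using $\frac{d}{dt} e(h(t)) = 2\pi i\, h'(t)\, e(h(t))$ together with the fact that $w$ is smooth and compactly supported in $[a,b]$ (so boundary contributions vanish), a single integration by parts gives $I = -\int (Dw)(t)\, e(h(t))\, dt$. Iterating $A$ times yields
\[
I = (-1)^A \int_a^b (D^A w)(t)\, e(h(t))\, dt,
\]
so everything reduces to obtaining a pointwise bound for $|D^A w(t)|$ on $[a,b]$.

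For the pointwise bound, my plan is a combinatorial induction on $A$. Expanding $D^A w$ by Leibniz, it is a finite sum of terms of the shape $w^{(k_0)}(t) \cdot \prod_{j=1}^{m} h^{(l_j)}(t) \cdot (h'(t))^{-N}$, where $k_0, m, N \geq 0$, $l_j \geq 2$, and the total derivative count equals $A$. Inserting the hypotheses $|w^{(k_0)}| \ll_{k_0} X U^{-k_0}$, $|h^{(l_j)}| \ll_{l_j} Y Q^{-l_j}$, and $|h'| \geq R$, each such term is bounded by
\[
X \cdot U^{-k_0} \cdot Y^{m} Q^{-\sum_j l_j} \cdot R^{-N}.
\]
Heuristically, each application of $D$ either pushes a new derivative onto $w$, saving a factor $(UR)^{-1}$, or differentiates a $1/h'$ factor, in which case the leading contribution $-h''/(h')^2$ contributes a factor of order $Y/(Q^2 R^2) = (QR/\sqrt Y)^{-2}$. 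A careful bookkeeping over all allocations of the $A$ steps between these two mechanisms, together with the observation that under the natural regime $UR, QR/\sqrt Y \geq 1$ mixed terms are dominated by the two pure extremes, gives the pointwise bound $|D^A w(t)| \ll_A X\bigl[(UR)^{-A} + (QR/\sqrt Y)^{-A}\bigr]$.

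Finally, integrating this pointwise estimate over the support, which has length at most $b - a$, produces the claimed bound. The only real obstacle is the combinatorial accounting of the many terms generated by $D^A$; since we only need an upper bound in absolute value no cancellation is sacrificed, and the induction is a routine exercise of the kind carried out in Blomer--Khan--Young \cite{bky}. The conceptually interesting point to track is the two distinct saving mechanisms of size $(UR)^{-1}$ and $(QR/\sqrt Y)^{-2}$, which together produce the two alternative terms in the final estimate.
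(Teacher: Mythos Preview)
The paper does not give its own proof of this lemma but simply refers to \cite{bky}, Section 8. Your proposal---iterated integration by parts via the operator $Df = (f/(2\pi i h'))'$ followed by combinatorial bookkeeping of the resulting terms---is precisely the argument carried out there, and is correct.
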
	
\begin{lemma}\label{s2}
Let $0<\delta<1/10,X,Y,V,Q>0, Z:=X+Y+b-a+1$ and assume that 
\begin{equation*}
Y\geq Z^{3\delta},\,\,\, b-a\geq V\geq \frac{QZ^{\frac{\delta}{2}}}{\sqrt{Y}}.
\end{equation*}Assume that $w$ satisfies 
\begin{equation*}
w^{(j)}(t)\ll_{j} XV^{-j}
\end{equation*}for $j\in \mathbb{N}_0$. Suppose that there exist unique $t_0\in [a,b]$ such that $h'(t_0)=0$, and furthermore
\begin{equation*}
h''(t)\gg YQ^{-2},\,\,\,\,\,\, h^{(j)}(t)\ll_{j}YQ^{-j},\,\,\,\,\,\,\,\,\hbox{for}\,\,j=1,2,3...
\end{equation*}Then the integral $I$ has an asymptotic expansion 
\begin{equation}
I=\frac{e(h(t_0))}{\sqrt{h''(t_0)}}\sum_{n\leq 3\delta^{-1}A}p_n(t_0)+O_{\delta,A}(Z^{-A}),\,\,\,p_n(t_0)=\frac{\sqrt{2\pi}e^{i\pi/4}}{n!}\left(\frac{i}{2h''(t_0)}\right)^nG^{(2n)}(t_0),
\end{equation}where
\begin{equation}
G(t)=w(t)e^{iH(t)},\,\,\,\,\,\, H(t)=h(t)-h(t_0)-\frac{1}{2}h''(t_0)(t-t_0)^2.
\end{equation}Furthermore, each $p_n$ is a rational function in $h'',h''',....,$ satisfying
\begin{equation*}
\frac{d^j}{dt_0^j}p_n(t_0)\ll{j,n}X(V^{-j}+Q^{-j})\left((V^2Y/Q^2)^{-n}+Y^{-n/3}\right)\,.
\end{equation*}
\end{lemma}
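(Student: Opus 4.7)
The plan is to follow the standard stationary phase strategy: first localize to a short window around the stationary point $t_0$, then change variables so that the quadratic part of the phase becomes a universal Gaussian, and finally expand the remaining smooth factor in a Taylor series and integrate term by term against that Gaussian.

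First I would localize. Set $V_0 = Q Z^{\delta/2}/\sqrt{Y}$, which by hypothesis satisfies $V_0 \leq V$. Let $\chi(t)$ be a smooth cutoff equal to $1$ on $[t_0-V_0,t_0+V_0]$ and supported on $[t_0-2V_0,t_0+2V_0]$, with $\chi^{(j)} \ll V_0^{-j}$. Split $I = I_1 + I_2$ with $I_1 = \int w\chi \, e(h)\, dt$ and $I_2 = \int w(1-\chi) e(h)\,dt$. In $I_2$ the hypotheses $h''(t)\gg YQ^{-2}$ together with $h'(t_0)=0$ give $|h'(t)|\gg Y|t-t_0|/Q^2 \gg \sqrt{Y}\,Z^{\delta/2}/Q$ throughout the support, and $w(1-\chi)$ has derivatives bounded by $X(V^{-j}+V_0^{-j})$. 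A direct application of Lemma \ref{s1} with $R = \sqrt{Y}\,Z^{\delta/2}/Q$, $U = \min(V,V_0)$, shows $I_2 \ll Z^{-A}$ for any $A$.

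For $I_1$, substitute $u = (t-t_0)\sqrt{h''(t_0)}$, so that
\begin{equation*}
I_1 = \frac{1}{\sqrt{h''(t_0)}}\int G_0(u)\,e^{iu^2/2}\,du,\qquad G_0(u) := w(t_0+u/\sqrt{h''(t_0)})\,\chi(\ldots)\,e^{iH(t_0+u/\sqrt{h''(t_0)})},
\end{equation*}
after pulling out the constant $e(h(t_0))$. Note that $H(t)$ vanishes to order three at $t_0$ since $H(t_0) = H'(t_0) = H''(t_0) = 0$, and that for $j\geq 3$ one has $H^{(j)}(t)\ll YQ^{-j}$. Combined with the bounds on $w$, Leibniz gives $(d/dt)^j G \ll X(V^{-j}+Q^{-j})$ on the window, where $G(t) = w(t)\chi(t) e^{iH(t)}$. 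I would then Taylor expand $G$ at $t_0$ to order $2N$; the odd moments against $e^{iu^2/2}$ vanish, and the even moments are explicit Fresnel integrals producing the coefficients $p_n(t_0)$ as stated, where the formula
\[
p_n(t_0) = \frac{\sqrt{2\pi}e^{i\pi/4}}{n!}\left(\frac{i}{2h''(t_0)}\right)^n G^{(2n)}(t_0)
\]
arises from translating the $u$-derivatives at $0$ back to $t$-derivatives at $t_0$.

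The main technical obstacle is the estimate for the remainder and the derivative bounds for $p_n(t_0)$. For the remainder one uses Taylor's theorem with integral form: the error after $2N$ terms is $\ll \|G^{(2N+1)}\|_\infty/(h''(t_0))^{N+1/2}\cdot V_0^{2N+2}$; choosing $N$ large relative to $\delta^{-1}A$ and using $V_0^2 h''(t_0) \asymp Z^\delta$ together with $G^{(j)}\ll X(V^{-j}+Q^{-j})$ makes this $O(Z^{-A})$. For the derivative bounds on $p_n$, one differentiates the Leibniz expansion of $G^{(2n)}(t_0) = (we^{iH})^{(2n)}(t_0)$; each derivative of $w$ gives a factor $V^{-1}$, each derivative of $e^{iH}$ (noting the vanishing of $H$, $H'$, $H''$ at $t_0$) produces either a factor $V^{-1}$ or, through lower-order combinatorial terms involving $H^{(j)}$ with $j\geq 3$, factors matching the stated power-saving $(V^2Y/Q^2)^{-n} + Y^{-n/3}$. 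Assembling these gives the claimed bound on $(d/dt_0)^j p_n(t_0)$, and combining with $I_2\ll Z^{-A}$ finishes the proof.
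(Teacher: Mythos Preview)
The paper does not supply its own proof of this lemma; it is quoted from \cite{bky}, Section~8. Your sketch is the standard stationary-phase argument and matches what is done there: localize to a window of radius $V_0=QZ^{\delta/2}/\sqrt{Y}$, dispose of the exterior via Lemma~\ref{s1}, separate the quadratic part of the phase, and expand $G=we^{iH}$ against the resulting Gaussian to read off the Fresnel moments.

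A couple of points in your write-up need tightening. The Fresnel moment identities you invoke hold for integrals over~$\mathbb{R}$, so after Taylor-expanding on the finite window you must extend each $\int_{|t-t_0|\le 2V_0}(t-t_0)^{2n}e\bigl(\tfrac12 h''(t_0)(t-t_0)^2\bigr)\,dt$ to all of $\mathbb{R}$ and control the tails by integration by parts; this is routine but should be said. More substantively, your derivation of the bound on $p_n$ is incomplete. The two savings $(V^2Y/Q^2)^{-n}$ and $Y^{-n/3}$ arise from the two extreme Leibniz terms in $G^{(2n)}(t_0)=(we^{iH})^{(2n)}(t_0)$: putting all $2n$ derivatives on $w$ gives $XV^{-2n}$, which after multiplication by $h''(t_0)^{-n}\asymp(Q^2/Y)^n$ yields the first; putting all derivatives on $e^{iH}$ and applying Fa\`a di Bruno, using that $H,H',H''$ vanish at $t_0$ so every part in a contributing partition of $2n$ is $\ge3$ (hence at most $2n/3$ parts, each of size $\ll Y$), gives $XY^{2n/3}Q^{-2n}$, which becomes the second. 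The mixed Leibniz terms and the additional $t_0$-derivatives still need to be shown to be dominated by these two extremes; this is elementary bookkeeping but does not follow automatically from what you wrote.
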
	
Using Lemma \ref{s1}, we can deduce the following analogue of the Van der Corput lemma for oscillatory integrals.
\begin{lemma}\label{s3}
Let $T>1$ and $Y\geq 1$, $X,Q,U,R>0$ be parameters bounded by $T^{O(1)}$. Let $F(t)$ a compactly supported smooth function satisfying
\begin{equation}\label{Fjder}
F^{(j)}(t)\ll_j XU^{-j},\,\,j\geq 1.
\end{equation}Suppose $\psi(t)$ is a smooth function such that
\begin{equation}\notag
\psi'(t)=A\cdot\frac{P(t)}{f(t)},
\end{equation}where $P(t)$ is a degree $d(\geq 1)$ monic polynomial and $f(t)$ is smooth function with $f(t)\gg 1$ for $t$ in the support of $F$. Furthermore, suppose
\begin{equation}\label{psidernew}
\psi^{(j)}(t)\ll_j YQ^{-j},\,\,j\geq 1.
\end{equation}Then
\begin{equation}\notag
\begin{aligned}
\int F(t)e(T\psi(t))\,dt&\ll_{\epsilon}T^{\epsilon}\sup(F)\left((AT)^{-1/(d+1)}+(ATU)^{-1/d}+(ATQ/Y^{1/2})^{-1/d}\right) \\
&\,\,\,\,+O_K(T^{-K}).
\end{aligned}
\end{equation}
\end{lemma}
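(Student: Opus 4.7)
The proof follows the classical Van der Corput paradigm: localize near the real zeros of the polynomial $P$, where the trivial bound applies, and use non-stationary phase (Lemma \ref{s1}) on the complement. Since $P$ is monic of degree $d$, the sub-level set $\{t : |P(t)| \leq \eta\}$ lies in at most $d$ intervals of total measure $O(\eta^{1/d})$, the worst case being a single real zero of multiplicity $d$. For a parameter $\eta > 0$ to be chosen, I would introduce a smooth cutoff $\chi(t)$ supported on $\{|P(t)|\leq 2\eta\}$, equal to $1$ on $\{|P(t)|\leq \eta\}$, and satisfying the uniform derivative bound $\chi^{(j)}(t)\ll_j \eta^{-j/d}$. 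Such a cutoff is built as a partition of unity organized around each real root of $P$, adapted to the local multiplicity, with the dominant contribution to the derivative bound coming from the root of maximal multiplicity.

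Split the integral according to this cutoff as $I_1+I_2$, where $I_1$ carries the weight $\chi$ and $I_2$ carries $1-\chi$. The trivial bound together with the length estimate above yields
\[
|I_1|\ll \sup(F)\cdot\eta^{1/d}.
\]
For $I_2$, on the support of $1-\chi$ we have $|P(t)|\geq \eta$, hence $|\psi'(t)|=A|P(t)|/f(t)\gg A\eta$ thanks to $f(t)\gg 1$. I would apply Lemma \ref{s1} to $I_2$ with phase $h(t)=T\psi(t)$ (for which $|h'|\geq TA\eta$ and $h^{(j)}\ll TYQ^{-j}$ for $j\geq 2$) and weight $F(1-\chi)$, whose $j$-th derivative is $\ll \sup(F)\cdot \max(U,\eta^{1/d})^{-j}$ by Leibniz. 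The lemma then yields arbitrary polynomial decay in $T$ provided each of the relevant failure-scales exceeds $T^{\epsilon}$.

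The endgame is to choose $\eta$ as small as possible subject to these constraints and balance against the $\eta^{1/d}$ bound for $I_1$. The three regimes in which the decay in Lemma \ref{s1} becomes marginal correspond to the three terms in the stated bound. When the smoothness of the cutoff $\chi$ is the bottleneck (effective smoothness scale $\eta^{1/d}\leq U$), the condition $TA\eta\cdot \eta^{1/d}\gg T^{\epsilon}$ forces $\eta\gg (AT)^{-d/(d+1)}$, producing the first term $(AT)^{-1/(d+1)}$. When $F$ is the bottleneck ($U<\eta^{1/d}$), the condition $TA\eta U\gg T^{\epsilon}$ forces $\eta\gg (ATU)^{-1}$, producing $(ATU)^{-1/d}$. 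When the phase derivative $\psi'$ itself is the bottleneck (via the $(Q_h R/\sqrt{Y_h})^{-A}$ factor in Lemma \ref{s1}), careful tracking of the scales $Q$ and $Y$ in the substitution $h=T\psi$ produces the third term $(ATQ/Y^{1/2})^{-1/d}$.

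The main technical obstacle is the construction of $\chi$ with the uniform derivative bound $\chi^{(j)}\ll_j \eta^{-j/d}$ when $P$ has multiple real roots of varying multiplicities; this is handled by a local partition of unity in which each root is treated at its own multiplicity scale and the worst case (multiplicity $d$) dictates the uniform bound. Once this cutoff is in hand, the remainder of the proof is routine book-keeping: the application of Lemma \ref{s1} and the optimization of the scale $\eta$ to balance the three constraints above.
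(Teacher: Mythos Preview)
Your overall strategy is the same as the paper's---split into a ``near the zeros of $P$'' piece estimated trivially and a ``far'' piece handled by Lemma~\ref{s1}---but the construction of the cutoff contains an error that breaks the argument. You claim $\chi^{(j)}\ll_j \eta^{-j/d}$ uniformly, with the worst case coming from a root of maximal multiplicity. This is backwards: at a \emph{simple} real root $z_i$ (with the other roots at bounded positive distance), the sublevel set $\{|P|\le\eta\}$ near $z_i$ has width $\asymp \eta$, not $\eta^{1/d}$, so a smooth cutoff localized there has derivatives $\asymp\eta^{-j}$, much larger than $\eta^{-j/d}$ for small $\eta$. Concretely, for $P(t)=t(t-1)$ the cutoff near $t=0$ has first derivative $\sim\eta^{-1}$, not $\eta^{-1/2}$. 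So roots of \emph{minimal} multiplicity dominate the derivative bound, and the cutoff $\chi$ you describe does not exist in general. This feeds directly into the application of Lemma~\ref{s1}: with the true effective smoothness scale $\min(U,\eta)$ in place of $\min(U,\eta^{1/d})$, the constraint becomes $TA\eta\cdot\eta\gg T^\epsilon$, forcing $\eta\gg(AT)^{-1/2}$ and yielding only $(AT)^{-1/(2d)}$ for the first term instead of $(AT)^{-1/(d+1)}$.

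The paper avoids this issue by never working with the sublevel set of $P$. It fixes a single radius $C$ (taken to be the right-hand side of the claimed bound times $T^\epsilon$) and sets
\[
G(t)=\prod_{i=1}^d\bigl(1-w(C^{-1}(t-\Re z_i))\bigr),
\]
so that the cutoff derivatives are $\ll C^{-j}$ uniformly, regardless of multiplicities. On the support of $G$ one has $|t-\Re z_i|\gg C$ for every $i$, hence $|t-z_i|\ge|t-\Re z_i|\gg C$ and $|P(t)|\gg C^d$, giving $|\psi'(t)|\gg AC^d$. Lemma~\ref{s1} then applies with $R=TAC^d$ and effective smoothness scale $\min(U,C)$; with the stated choice of $C$ all three failure parameters exceed $T^\epsilon$ and that piece is $O_K(T^{-K})$. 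The complement $1-G$ is supported on at most $d$ intervals of total length $O(C)$, and the trivial bound there gives $\sup(F)\cdot C$. Your proof is easily repaired by switching to this uniform-scale localization around $\Re z_i$; the balancing you sketched then goes through with $C$ playing the role of your $\eta^{1/d}$.
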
	
\begin{proof}
Fix a smooth function $w$ satisfying $w(t)=1$ if $t\in [-1/2, 1/2]$ and $w(t)=0$ if $|t|>1$. Denote
\begin{equation}\label{Cdef}
C:= T^{\epsilon}\left((AT)^{-1/(d+1)}+(ATU)^{-1/d}+(ATQ/Y^{1/2})^{-1/d}\right) ,
\end{equation}and define
\begin{equation}\label{defG}
G(t):= \prod_{i=1}^{d}(1-w(C^{-1}(t-\Re z_i))).
\end{equation}where $z_i, i=1,2,\cdots ,d$, are the roots of $P(t)$. Note that $t\in\text{supp}(G)$ implies $t-\Re z_i\gg C$ for all $1\leq i\leq d$. This in turn implies
\begin{equation}\label{1stderv}
\psi'(t)=A\cdot\frac{P(t)}{f(t)}\gg A C^d,\,\,t\in \text{supp}(G).
\end{equation}Now consider the integral
\begin{equation}\notag
\int G(t)F(t)e(T\psi(t))\,dt.
\end{equation}Then from \eqref{defG} and \eqref{Fjder}, we obtain
\begin{equation}\label{gfder}
\frac{\partial^j}{\partial t^j} G(t)F(t)\ll_j \max\{1,X\}(C+U)^{-j}.
\end{equation}Using \eqref{1stderv}, \eqref{gfder}, \eqref{psidernew} and invoking Lemma \ref{s1}, we obtain
\begin{equation}\label{par}
\int G(t)F(t)e(T\psi(t))\,dt\ll_K \max\{1,X\} \Big((TAC^d(C+U))^{-K}+(TAC^dQ/Y^{1/2})^{-K}\Big).
\end{equation}From the definition of $C$ in \eqref{Cdef}, it follows
\begin{equation}\notag
TAC^{d+1}\gg TA \cdot T^{\epsilon} (AT)^{-1}= T^{\epsilon},\,\,\,\,\,\text{and}\,\,\,\,TAC^dU\gg TA\cdot T^{\epsilon}(ATU)^{-1}\cdot U=T^{\epsilon},
\end{equation}Hence 
\begin{equation}\notag
TAC^d(C+U)\gg T^{\epsilon}.
\end{equation}Similarly, we obtain
\begin{equation}\notag
TAC^dQ/Y^{1/2}\gg T^{\epsilon}TAQ/Y^{1/2}\cdot (ATQ/Y^{1/2})^{-1}=T^{\epsilon}.
\end{equation}Substituting the last two obtained lower bounds into \eqref{par}, we obtain
\begin{equation}\notag
\int G(t)F(t)e(T\psi(t))\,dt\ll_K T^{-K},
\end{equation}and consequently
\begin{equation}\notag
\begin{aligned}
\int F(t)e(T\psi(t))\,dt&=\int G(t)F(t)e(T\psi(t))\,dt+\int (1-G(t))F(t)e(T\psi(t))\,dt\\
&=\int (1-G(t))F(t)e(T\psi(t))\,dt+O_K(T^{-K}).
\end{aligned}
\end{equation}The lemma follows after executing the last integral trivially.
\end{proof}The next lemma generalises the chain rule to higher derivatives. This is will required to estimate derivatives of the weight functions obtained from stationary phase analysis.
\begin{lemma}[Fa\'{a} di Bruno's formula]\label{bruno}
Let $f,g : \mathbb{R}\to \mathbb{R}$ be two smooth functions. Then
\begin{equation}\notag
\frac{d^n}{dx^n}f(g(x))= \sum\frac{n!}{m_1!1!^{m_1}m_2!2!^{m_2}\cdots m_n!n!^{m_n}}\cdot f^{(m_1+m_2+\cdots+m_n)}(g(x))\cdot\prod_{j=1}^{n}\left(g^{(j)}(x)\right)^{m_j},
\end{equation}where the sum is over all $n$-tuples of non-negative integers $(m_1,\cdots,m_n)$ satisfying the constraint
\begin{equation}\notag
1\cdot m_1+2\cdot m_2+\cdots+n\cdot m_n=n.
\end{equation}
\end{lemma}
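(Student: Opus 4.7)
My plan is to prove the formula via the formal Taylor-series approach, which avoids the combinatorial bookkeeping required in a direct induction on $n$. Concretely, I would fix $x$ and expand both $g$ around $x$ and $f$ around $g(x)$ in a small parameter $h$. The identity
\[
f(g(x+h))=\sum_{n\ge 0}\frac{h^n}{n!}\,\frac{d^n}{dx^n}f(g(x))
\]
is one side of a formal power series identity, and the right-hand side of Fa\`a di Bruno's formula will emerge from computing the same power series by substitution.

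First I would write $g(x+h)-g(x)=\sum_{j\ge 1}g^{(j)}(x)h^j/j!$ and plug this into the Taylor expansion of $f$ about $g(x)$, obtaining
\[
f(g(x+h))=\sum_{M\ge 0}\frac{f^{(M)}(g(x))}{M!}\Big(\sum_{j\ge 1}\frac{g^{(j)}(x)}{j!}h^j\Big)^{\!M}.
\]
Next, I would apply the multinomial theorem to the inner $M$-th power. Writing the exponents as $(m_1,m_2,\dots)$ with $m_1+m_2+\cdots=M$, the product becomes
\[
\frac{M!}{\prod_j m_j!}\prod_{j\ge 1}\Big(\frac{g^{(j)}(x)}{j!}\Big)^{\!m_j}h^{\,jm_j},
\]
so the exponent of $h$ is $\sum_j j\,m_j$. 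Collecting terms with $\sum_j j\,m_j=n$, the coefficient of $h^n/n!$ is exactly
\[
\sum\frac{n!}{\prod_j m_j!\,(j!)^{m_j}}\,f^{(m_1+\cdots+m_n)}(g(x))\,\prod_{j=1}^n\big(g^{(j)}(x)\big)^{m_j},
\]
where the sum runs over $(m_1,\dots,m_n)$ with $\sum_j j\,m_j=n$ (any $m_j$ with $j>n$ must vanish). Matching with the left-hand side gives the claimed identity.

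The main technical issue is justifying the manipulation of formal series rigorously. Since the statement is purely algebraic in the jets of $f$ and $g$ at a single point, this is handled either by working entirely in the ring of formal power series $\mathbb{R}[[h]]$ (truncated at order $n$, so only finitely many $m_j$ contribute) or by comparing Taylor polynomials of order $n$ on both sides, which is permitted by the smoothness of $f$ and $g$. Either route gives identification of the coefficient of $h^n/n!$ as a polynomial identity in the numbers $g^{(j)}(x)$ and $f^{(M)}(g(x))$, so no convergence is actually required. Alternatively, a direct induction on $n$ is available: one differentiates the formula for $n$, applies the product rule to $\prod_j(g^{(j)})^{m_j}$ and the chain rule to $f^{(M)}(g(x))$, and checks that the resulting shifts of the index vector $(m_1,\dots,m_n)$ recombine with the correct multinomial weights; this would be the fallback if a purely algebraic justification is preferred.
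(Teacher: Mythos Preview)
Your argument is correct: the formal power series (or Taylor polynomial) approach via the multinomial expansion of $\big(\sum_{j\ge 1}g^{(j)}(x)h^j/j!\big)^M$ is one of the standard proofs of Fa\`a di Bruno's formula, and your remark that one may work with truncated jets so that only finitely many terms appear handles the rigor issue cleanly.

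There is nothing to compare against, however: the paper simply states the lemma as a classical fact and gives no proof at all. Your write-up therefore supplies strictly more than the paper does. If you want to match the paper's treatment, a one-line citation (e.g.\ to any standard reference on the formula) would suffice; if you want to include a proof, what you have is fine.
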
	

\section{The set up}	
For $0\leq c\leq \infty $ let
\begin{equation}\label{31}
 \Lambda_{c}:=\{\mu\in \mathbb{C}^3\,\,|\,\, \mu_1+\mu_2+\mu_3=0,\,\,|\Re\mu_j|\leq c\}
 \end{equation} and
 \begin{equation}
 \Lambda'_{c}:=\{\mu\in \Lambda_c\,\,|\,\,\{-\mu_1,-\mu_2,-\mu_3\}=\{\bar{\mu_1},\bar{\mu_2}, \bar{\mu_3}\}\}.
 \end{equation}
 In the Lie algebra $\Lambda_{\infty}$ we will simultaneously use $\mu$ and $\nu=(\nu_1,\nu_2,\nu_3)$ coordinates, defined by
 \begin{equation}
 \nu_1=\frac{1}{3}(\mu_1-\mu_2),\,\,\,\nu_2=\frac{1}{3}(\mu_2-\mu_3),\,\,\,\frac{1}{3}(\mu_3-\mu_1).
 \end{equation}Let $\mathcal{W}$ denote the Weyl group,
 \[\mathcal{W}:=\left\{I_3, w_2=\left(\begin{smallmatrix}1 & 0 & 0\\0 & 0 & 1\\0 & 1 & 0 \end{smallmatrix}\right), w_3=\left(\begin{smallmatrix}0 & 1 & 0\\1 & 0 & 0\\0 & 0 & 1 \end{smallmatrix}\right), w_4= \left(\begin{smallmatrix}0 & 1 & 0\\0 & 0 & 1\\1 & 0 & 0 \end{smallmatrix}\right), w_5=\left(\begin{smallmatrix}0 & 0 & 1\\1 & 0 & 0\\0 & 1 & 0 \end{smallmatrix}\right), w_6=\left(\begin{smallmatrix}0 & 0 & 1\\0 & 1 & 0\\1 & 0 & 0 \end{smallmatrix}\right) \right\},\]which acts on $\mu=(\mu_1,\mu_2,\mu_3)$ by permutations. Let $\pi_0$ be our preferred $SL(3,\mathbb{Z})$ Maass cusp from with with Hecke eigenvalues $A_{\pi_0}(1,n)$ and Langlands parameter $(\mu_{0,1} \mu_{0,2},\mu_{0,3})\in \Lambda_0$, and assume that 
 \begin{equation}
  |\mu_{0,j}|\asymp |\nu_{0,j}|\asymp T\,\,\,(j=1,2,3)\,.
  \end{equation} 
  $L(1/2,\pi_0\times f)$ has analytic conductor $T^3$. Hence, from the approximate functional equation (see Th. 5.3 and Prop. 5.4 in \cite{iwaniec}) we essentially get
\begin{equation}\label{FE}
L\left(\frac{1}{2},\pi_0\times f\right)\ll \left|\sum_{m=1}^{\infty}\frac{\lambda_{\pi_0\times f}(m)}{m^{1/2}}V\left({\frac{m}{T^{3}}}\right)\right|,
\end{equation}where $$\lambda_{\pi_0\times f}(m)=\mathop{\sum\sum}_{nr^2=m}A_{\pi_0}(r,n)\lambda_{f}(n),$$and the smooth function $V$ satisfies
\begin{equation}\notag
x^jV^{(j)}(x)=O_{A}(1+|x|)^{-A}.
\end{equation}


\begin{lemma}\label{trunc}
\begin{equation}\label{dyd}
L\left(\frac{1}{2},\pi_0\times f\right)\ll T^{\epsilon}\sup_{r\leq T^{\theta}}\sup_{\frac{T^{3-\theta}}{r^2}\leq N\leq \frac{T^{3+\epsilon}}{r^2}}\frac{|S_r(N)|}{N^{1/2}}+T^{(3-\theta)/2}\,,
\end{equation}
where
 \begin{equation*}
S_r(N)=\sum_{n=1}^{\infty}A_{\pi_0}(r,n)\lambda_f(n)V\left(\frac{n}{N}\right)\,,
\end{equation*} where $V$ is a smooth function supported in $[1,2]$ and satisfies $V^{(j)}(x)\ll_{j} 1$.

\end{lemma}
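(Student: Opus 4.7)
The plan is to deduce the lemma from the approximate functional equation \eqref{FE} by a smooth dyadic decomposition, separating the contribution that can be matched with $S_r(N)$ from tail contributions bounded trivially via Rankin--Selberg. Expanding $\lambda_{\pi_0\times f}(m)=\sum_{nr^2=m}A_{\pi_0}(r,n)\lambda_f(n)$ in \eqref{FE} and inserting smooth dyadic partitions $\sum_M W_M(m)=1$ and $\sum_R U_R(r)=1$ with $W_M,U_R$ bump functions supported in $[M,2M]$ and $[R,2R]$, the rapid decay of $V$ restricts us to $M\leq T^{3+\epsilon}$, and trivially $R^2\leq M$. Setting $N:=M/R^2$, each dyadic block takes the shape
\begin{equation*}
\frac{1}{M^{1/2}}\sum_{r\sim R}\sum_{n\sim N} A_{\pi_0}(r,n)\lambda_f(n)\,H_{R,N}(r,n),
\end{equation*}
where $H_{R,N}$ is a smooth weight of size $O(1)$ with controlled derivatives, inherited from $V$ and the dyadic bumps.

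The proof then splits into three cases. \emph{First}, when $M<T^{3-\theta}$, a direct Cauchy--Schwarz application combined with the Rankin--Selberg bound $\sum_{m\leq X}|\lambda_{\pi_0\times f}(m)|^2\ll X^{1+\epsilon}$ yields a contribution $\ll M^{1/2+\epsilon}\leq T^{(3-\theta)/2+\epsilon}$. \emph{Second}, when $M\geq T^{3-\theta}$ but $R>T^{\theta}$, Cauchy--Schwarz in $(r,n)$ combined with the $GL(3)$ Rankin--Selberg estimate for $A_{\pi_0}(r,n)$ and the $GL(2)$ bound $\sum_{n\sim N}|\lambda_f(n)|^2\ll N^{1+\epsilon}$ yields a contribution $\ll M^{1/2+\epsilon}/R^{1/2}\leq T^{(3-\theta)/2+\epsilon}$, since $NR^2\asymp M\leq T^{3+\epsilon}$. \emph{Third}, when $R\leq T^{\theta}$ and $M\geq T^{3-\theta}$, one has $N=M/R^2\in[T^{3-\theta}/R^2,\,T^{3+\epsilon}/R^2]$, precisely the range over which the supremum is taken. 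For each fixed $r\sim R$, a standard Mellin inversion on $H_{R,N}(r,\cdot)$ identifies the inner $n$-sum with $S_r(N)$ up to a factor $T^{\epsilon}$, since the Mellin transform of a smooth, compactly supported bump has rapid decay on vertical lines. Trivially summing over $r\sim R$ and using $NR^2\asymp M$,
\begin{equation*}
\frac{1}{M^{1/2}}\sum_{r\sim R}|S_r(N)|\,T^{\epsilon}\ll \frac{RN^{1/2}}{M^{1/2}}\sup_{r,N}\frac{|S_r(N)|}{N^{1/2}}\,T^{\epsilon}\ll T^{\epsilon}\sup_{r,N}\frac{|S_r(N)|}{N^{1/2}}.
\end{equation*}

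Summing over the $O(\log^2 T)$ dyadic blocks produces the claimed bound, absorbing logarithmic losses into $T^{\epsilon}$ and making a harmless adjustment of $\theta$ by $\epsilon$ to clean up $T^{(3-\theta)/2+\epsilon}$ into $T^{(3-\theta)/2}$. There is no serious technical obstacle here; the only item requiring care is the Mellin transfer in the third case, which is entirely standard, together with the bookkeeping needed to ensure that the trivial estimates in the first two cases match the same power $T^{(3-\theta)/2}$.
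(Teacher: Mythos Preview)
Your proposal is correct and follows essentially the same route as the paper. The paper also starts from \eqref{FE}, splits the $(r,n)$-range into the three regions you describe, bounds the two tail regions via Cauchy--Schwarz together with the $GL(3)$ Ramanujan-on-average bound $\sum_{n_1^2n_2\le x}|A_{\pi_0}(n_1,n_2)|^2\ll x^{1+\epsilon}$, and then dyadically localises the remaining region; the only cosmetic differences are that the paper splits directly in $(r,n)$ rather than in $(M,R)$ and uses partial summation rather than Mellin inversion to pass to $S_r(N)$.
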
	
\begin{proof}From the approximate  functional equation \eqref{FE}, one has
\begin{equation}\label{rr}
\begin{aligned}
L\left(\frac{1}{2},\pi_0\times f\right)&\ll\left|\mathop{\sum\sum}_{n,r=1}^{\infty}\frac{A_{\pi_0}(r,n)\lambda_{f}(n)}{(nr^2)^{1/2}}V\left(\frac{nr^2}{T^{3}}\right)\right|\\
&\ll\Bigg|\mathop{\sum\sum}_{nr^2\leq T^{3+\epsilon}}\frac{A_{\pi_0}(r,n)\lambda_{f}(n)}{(nr^2)^{1/2}}V\left(\frac{nr^2}{T^{3}}\right)\Bigg|+T^{-2019}\\
&=\Bigg|\sum_{r\leq T^{(3+\epsilon)/2}}\frac{1}{r}\sum_{n\leq T^{3+\epsilon}/r^2}\frac{A_{\pi_0}(r,n)\lambda_{f}(n)}{n^{1/2}}V\left(\frac{nr^2}{T^{3}}\right)\Bigg|+T^{-2019}\,.
\end{aligned} 
\end{equation}
We divide the range in the last summation as
\begin{equation*}
\sum_{r\leq T^{(3+\epsilon)/2}}\sum_{n\leq\frac{ T^{3+\epsilon}}{r^2}}=\sum_{r\leq T^{\theta}}\sum_{\frac{T^{3-\theta}}{r^2}\leq n\leq \frac{T^{3+\epsilon}}{r^2}}+\sum_{r\leq T^{\theta}}\sum_{n< \frac{T^{3-\theta}}{r^2}}+\sum_{r>T^{\theta}}\sum_{n\leq\frac{ T^{3+\epsilon}}{r^2}}
\end{equation*}where an optimal $\theta>0$ will be chosen later. Using the Ramanujan bound on average
\begin{equation*}
\mathop{\sum\sum}_{n_1^2n_2\leq x}|A(n_1,n_2)|^2\ll x^{1+\epsilon}
\end{equation*} one sees that the last two ranges contributes at most $T^{(3-\theta)/2}$ to \eqref{rr}. Hence we have
\begin{equation*}
L\left(\frac{1}{2},\pi_0\times f\right)\ll\Bigg|\sum_{r\leq T^{\theta}}\frac{1}{r}\sum_{\frac{T^{3-\theta}}{r^2}\leq n\leq \frac{T^{3+\epsilon}}{r^2}}\frac{A_{\pi_0}(r,n)\lambda_{f}(n)}{n^{1/2}}V\left(\frac{nr^2}{T^{3}}\right)\Bigg|+T^{(3-\theta)/2}.
\end{equation*} Using a smooth dyadic partition of unity $W$, we see that inner sum above is at most
\begin{equation*}
 \sup_{\frac{T^{3-\theta}}{r^2}\leq N\leq \frac{T^{3+\epsilon}}{r^2}}\Bigg|\sum_{n=1}^{\infty}\frac{A_{\pi_0}(r,n)\lambda_{f}(n)}{n^{1/2}}W\left(\frac{n}{N}\right)V\left(\frac{nr^2}{T^{3}}\right)\Bigg|.
\end{equation*} Using partial summation one sees that this is essentially bounded by
 \begin{equation*}
\sup_{\frac{T^{3-\theta}}{r^2}\leq N\leq \frac{T^{3+\epsilon}}{r^2}}\frac{|S_r(N)|}{N^{1/2}}\,
\end{equation*}where 
\begin{equation*}
S_r(N)=\sum_{n=1}^{\infty}A_{\pi_0}(r,n)\lambda_f(n)V_{r,N}\left(\frac{n}{N}\right)\,,
\end{equation*} and $V_{r,N}(x)=W(x)V(Nr^2x/T^{3})$. Note that $V_{r,N}(x)$ is supported on $[1,2]$ and satisfies $V_{r,N}^{(j)}\ll_{j} 1$ (bounds independent of $r,N$). Henceforth we ignore the dependence on $r,N$ and assume $V_{r,N}$ is same function for all $r,N$ and call it $V(x)$ (abusing notation). The claim follows.
\end{proof}
As seen from the sketch above, the unamplified moment $\sum |L(1/2,\pi\times f|^2$ fails to beat the convexity at the diagonal. To overcome this, we use the following amplifier introduced in \cite{Bl1}.
\subsection{The amplifier}
 Let the sequence $\{\pi_j\}$ of Maass cusp forms be a orthonormal basis for  $\mathcal{L}^2\left(\mathfrak{h}^3/\Gamma\right)$. Consider the amplifier 
\[
A(\pi)=\sum_{j=1}^{3}\left|\sum_{\substack{ l\sim L \\ l\,prime}}A_\pi(1,l^j)\overline{x(l^j)}\right|^2\,,
\] where 
\[x(n) :=\hbox{sgn} (A_{\pi_0}(1.n))\in S^1\cup \{0\}\,.\]
Note that form the Hecke relation
\begin{equation}\notag
A_{\pi}(1,l)A_{\pi}(1,l^2)=A_{\pi}(1,l^3)+A_{\pi}(1,l)A_{\pi}(l,1)-1
\end{equation}it follows $\max\{|A_{\pi}(1,l)|, |A_{\pi}(1,l^2)|, |A_{\pi}(1,l^3)| \}\gg 1$ and hence by the Cauchy-Schwarz inequality 
\begin{equation}\label{amp}
A(\pi_0)\gg \left(\sum_{j=1}^{3}\sum_{\substack{l\sim L\\l\,prime}}|A_{\pi_0}(1,l^j)|\right)^2\gg \Big(\sum_{\substack{l\sim L\\ l\,prime}}1\Big)^2\gg L^{2-\epsilon}.
\end{equation}

 We now insert the amplifiers $A(\pi_j)$ into Lemma \ref{trunc} to get,
\begin{equation}\label{3.11}
|L(1/2,\pi_0\times f)|^2\ll T^{\epsilon}\sup_{r\leq T^{\theta}}\sup_{\frac{T^{3-\theta}}{r^2}\leq N\leq \frac{T^{3+\epsilon}}{r^2}}\frac{\sum_{\mu_j=\mu_0+O(T^{\epsilon})}A(\pi_j)|S_{r,j}(N)|^2}{NL^2}+T^{(3-\theta)}\,.
\end{equation}We will make the spectral average $\sum_{\mu_j=\mu_0+O(T^{\epsilon})}$ more precise in the upcoming sections. Our main object of study now becomes
\begin{equation}\label{2.6}
\sum_{\mu_j=\mu_0+O(T^{\epsilon})}A(\pi_j)|S_{r,j}(N)|^2,
\end{equation}for which we will eventually establish
\begin{theorem}\label{kutz}
\begin{equation}\label{2.7}
\begin{aligned}
\sum_{\mu_j=\mu_0+O(T^{\epsilon})}A(\pi_j)|S_{r,j}(N)|^2 \ll NT^3L+\frac{N^2r^2L^8}{T^{1/280}} +\sum_{k=1}^3\sum_{\substack{(k_0,k_1,k_2)\\k_0+k_1+k_2=k\\L^{k_1}\ll r}}\frac{NT^3L^2}{L^{k_2+k_0}}.
\end{aligned}
\end{equation}
\end{theorem}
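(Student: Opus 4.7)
The plan is to open the square $|S_{r,j}(N)|^2$, expand the amplifier $A(\pi_j)$ into a double sum over primes $l_1, l_2 \sim L$, and use Hecke multiplicativity in $GL(3)$ to rewrite the four-fold product
\begin{equation*}
A_{\pi_j}(1, l_1^{j_1})\overline{A_{\pi_j}(1, l_2^{j_2})}\, A_{\pi_j}(r, m)\overline{A_{\pi_j}(r, n)}
\end{equation*}
as a linear combination of pieces of the form $A_{\pi_j}(1, m')\overline{A_{\pi_j}(1, n')}$ for new variables $m', n'$ depending on $l_1, l_2, m, n, r$. The Hecke branches are indexed by a triple $(k_0, k_1, k_2)$ of non-negative integers with $k_0+k_1+k_2=k\in\{1,2,3\}$, recording how the Hecke factor of size $L^k$ splits among the indices $(1,\cdot)$, $(l_1, \cdot)$, and $(r, \cdot)$. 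When $k_1 \geq 1$ forces $l_1^{k_1}$ to divide $r$, we get the constraint $L^{k_1} \ll r$, and the corresponding contribution, bounded trivially via Ramanujan-on-average, is exactly the third sum on the right-hand side of \eqref{2.7}.

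After this preparation the spectral average becomes $\sum_j h(\mu_j) A_{\pi_j}(1, m')\overline{A_{\pi_j}(1, n')}$, where $h$ is a bump localising $\mu_j$ near $\mu_0$ with spectral mass $\asymp T^3$. Applying the $GL(3)$ Kuznetsov formula of \cite{Bl2} produces a diagonal term $\delta_{m'=n'}$, which after resumming against the amplifier yields the first term $NT^3L$ in \eqref{2.7}, together with off-diagonal pieces indexed by the non-trivial Weyl elements. A direct size analysis of the kernels $\Phi_w$ (as in \cite{Bl1}) shows that $w_2, w_3, w_4, w_5$ contribute negligibly, leaving only the long Weyl element $w_6$, which gives an expression of the shape \eqref{2.3}: a double sum over moduli $D_1, D_2 \ll T^{1+\epsilon}$, with effective support imposed by Lemma \ref{5.2}, of $GL(3)$ Kloosterman sums weighted by $\Phi_{w_6}$.

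For the $w_6$ contribution I would first invoke the Kiral--Nakasuji identity from \cite{kiral} to write the long Weyl Kloosterman sum as a finite sum of products of two classical Kloosterman sums, reducing uniformly to the coprime factorisation \eqref{2.4}. I would then apply $GL(2)$ Voronoi summation to both the $m'$- and $n'$-sums; the Voronoi additive twist cancels with the Kloosterman additive characters, turning the Kloosterman sums into Ramanujan sums, so that one saves the full modulus $D_1 D_2$ instead of only $\sqrt{D_1 D_2}$. After Voronoi, the dual sums have effective lengths $\asymp D_1 T$ and $\asymp D_2 T$ respectively (read off from the conductors), and an additional factor of $r^2$ enters because the relevant $GL(2)$ conductor is inflated by $r$ on each side.

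The main obstacle is obtaining a nontrivial bound for the transformed kernel $\Phi_{w_6}$ against the oscillation brought in by the two Voronoi dual variables. I would substitute the Bessel-function representation \eqref{522}--\eqref{525} of the Kuznetsov kernel, extract the oscillation using the asymptotics \eqref{bes}, and evaluate the inner $u$-integral by stationary phase using Lemmas \ref{s2}--\ref{s3}; this reduces matters to a one-dimensional oscillatory integral whose phase is, after solving the stationary conditions to leading order, essentially a degree-$13$ polynomial in the remaining variable. The system of multidimensional stationary equations has no closed form, so pointwise square-root savings on $\Phi_{w_6}$ are out of reach; instead one proves the averaged estimate $T^{2-1/140}$ of Theorem \ref{average} by combining an average over the spectral parameter $\mu$ with the Van der Corput-type bound of Lemma \ref{s3}. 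Collecting the factors---$N^2$ from the $(m,n)$ sum, $r^2$ from the Voronoi conductors, $L^8$ from the amplifier and Hecke preprocessing, $T^2$ from the $(D_1, D_2)$ sums, the saving $T^{-1/140}$ over the idealistic $T^{3/2}$ supplied by Theorem \ref{average}, a normalisation against the spectral mass $T^3$, and an intermediate Cauchy--Schwarz that halves the exponent---yields the middle term $N^2 r^2 L^8 / T^{1/280}$ of \eqref{2.7}.
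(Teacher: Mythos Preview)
Your overall strategy matches the paper's --- Cauchy--Schwarz reduction to $\tilde{S}_{\bar{k},r,j}$, Kuznetsov, Kiral--Nakasuji, $GL(2)$ Voronoi turning Kloosterman into Ramanujan sums, and the averaged kernel bound of Theorem~\ref{average} --- but there are two genuine gaps.

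First, your identification of the third term in \eqref{2.7} is wrong. The constraint $L^{k_1}\ll r$ does come from the Hecke condition $l^{k_1}\mid r$, but the bound $NT^3L^2/L^{k_2+k_0}$ is \emph{not} obtained by estimating those branches trivially via Ramanujan-on-average. In the paper every branch is pushed through Kuznetsov; the third term is the bound \eqref{A_0} on the \emph{Eisenstein zero-frequency contribution} $A_0$, which appears only when $f=E(z,1/2)$ from the main term of the $d(n)$-Voronoi formula. This piece is genuinely of size $T^3$ in the off-diagonal and would kill the bound unless one saves in $L$; that saving is supplied by Theorem~\ref{dia}, which exploits a cancellation of residues (a hidden zero of the Mellin transform of the $w_6$ kernel at $s_1+s_2=0$) to gain $(\theta_1\theta_2)^{-1/2}$. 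Your sketch omits the Eisenstein main term entirely, so as written the argument does not close for $f=E(z,1/2)$.

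Second, the exponent $1/280$ is not produced by a Cauchy--Schwarz that halves $1/140$. The restriction $|c_1-1|\ll T^{-1/140}$ from Theorem~\ref{average} constrains only the dual variable $\tilde m$; one derives two counting bounds, one of shape $(Z_1/Z_2)^{1/4}T^{-1/140}$ (fixing $\tilde m,c_2,d_2$ first and using the restriction) and one of shape $(Z_2/Z_1)^{1/4}$ (fixing $\tilde n,c_1,d_1$ first, no restriction), and the minimum over the dyadic ratio $Z_1/Z_2$ yields $T^{-1/280}$. A smaller point: after the Hecke step the Fourier coefficients are $A_{\pi_j}(rl^{k_2-k_1},\cdot)$, not $A_{\pi_j}(1,\cdot)$, so Kuznetsov must be applied with nontrivial first index $(m_1,n_1)=(rl_1^{k_2-k_1},rl_2^{k_2-k_1})$.
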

\begin{proof}[\textbf{Proof of Theorem \ref{subcon}}]
Substituting \eqref{2.7} in \eqref{3.11} we get
\begin{equation}
\begin{aligned}
|L(1/2,\pi_0\times f|^2 \ll \frac{T^3}{L}+T^{(3-1/280)}L^6+T^{3-\theta} + \sup_{\substack{\frac{T^{3-\theta}}{r^2}\leq N\leq \frac{T^{3+\epsilon}}{r^2}\\r\leq T^{\theta}}}\sum_{k=1}^3\sum_{\substack{(k_0,k_1,k_2)\\k_0+k_1+k_2=k\\L^{k_1}\ll r}}\frac{T^3}{L^{k_2+k_0}}.
\end{aligned}
\end{equation} Equating the first two terms, we choose $L=T^{1/1960}$. We now choose $\theta=1/1960-\epsilon$. With these choices, note that $k_1$ must equal zero in the third term and consequently $k_2+k_0\geq 1$. Hence the third term is dominated by the first term and we get
\begin{equation}
L(1/2,\pi_0\times f)\ll_{\epsilon} T^{(3/2-1/1960+\epsilon)}\,.
\end{equation}
\end{proof}
To begin with the proof of Theorem \ref{kutz}, we open the absolute value square and apply  the $GL(3)$ Kuznetsov formula. To bring it in the proper set-up, we need to rearrange \eqref{2.6} further. From the Hecke relations, we get
\begin{equation}
A_{\pi_j}(1,l^k)A_{\pi_j}(r,n)=\sum_{\substack{d_0d_1d_2=l^k\\d_1|r\\d_2|n}}A_{\pi_j}\left(\frac{rd_2}{d_1},\frac{nd_0}{d_2}\right)=\sum_{\substack{(k_0,k_1,k_2)\\k_0+k_1+k_2=k\\l^{k_1}|r,\,l^{k_2}|n}}A_{\pi_j}(rl^{k_2-k_1},nl^{k_0-k_2})\,.
\end{equation}Hence by an application of the Cauchy-Schwarz inequality, \eqref{2.6} becomes
\begin{equation}\label{CS}
\sum_{\mu_j=\mu_0+O(T^{\epsilon})}A(\pi_j)|S_{r,j}(N)|^2\ll\sum_{k=1}^{3}\sum_{\substack{\bar{k}=(k_0,k_1,k_2)\\k_0+k_1+k_2=k}}\sum_{\mu_j=\mu_0+O(T^{\epsilon})}|\tilde{S}_{\bar{k},r,j}(N)|^2\,,
\end{equation} where
\begin{equation*}
\tilde{S}_{\bar{k},r,j}(N)=\sum_{l\asymp L}\sum_{\substack{n\asymp N\\l^{k_2}|n}}A_{\pi_j}(rl^{k_2-k_1},nl^{k_0-k_2})\lambda (n)V\left(\frac{n}{N}\right)\,.
\end{equation*} Opening the absolute value square we have

\begin{equation}\label{2.10}
\begin{aligned}
\sum_{\mu_j=\mu_0+O(T^{\epsilon})}|\tilde{S}_{\bar{k},r,j}(N)|^2
=&\sum_{l_1,l_2\asymp L}\sum_{n}\sum_{m}\lambda(ml_1^{k_2})\overline{\lambda(nl_2^{k_2})}V\left(\frac{ml_1^{k_2}}{N}\right)V\left(\frac{nl_2^{k_2}}{N}\right)
\\ &\times\sum_{\mu_j=\mu_0+O(T^{\epsilon})}A_{\pi_j}(rl_1^{k_2-k_1},ml_1^{k_0})\overline{A_{\pi_j}(rl_2^{k_2-k_1},nl_2^{k_0})}\,.
\end{aligned}
\end{equation}
We are now in position to apply the Kuznetsov formula to the spectral sum above.

\section{The $GL(3)$ Kutznetsov}We first introduce some notations. 
\subsection{Kloosterman sums.} For $n_1,n_2,m_1,m_2,D_1,D_2\in\mathbb{N}$, we have the following two types of Kloosterman sums
\begin{equation}
\tilde{S}(n_1,n_2,m_1;D_1,D_2):=\sum_{\substack{C_1(\bmod D_1),C_2(\bmod D_2)\\(C_1,D_1)=(C_2,D_2/D_1)=1}}e\left(n_2\frac{\bar{C_1}C_2}{D_1}+m_1\frac{\bar{C_2}}{D_2/D_1}+n_1\frac{C_1}{D_1}\right)
\end{equation}for $D_1\,|\,D_2$, and 
\begin{equation}\label{4.2}
\begin{aligned}
&S(n_1,m_2,m_1,n_2;D_1,D_2)\\
&=\sum_{\substack{B_1,C_1(\bmod D_1)\\B_2,C_2(\bmod D_2)\\D_1C_2+B_1B_2+D_2C_1\equiv 0(\bmod D_1D_2)\\(B_j,C_j,D_j)=1}}e\left(\frac{n_1B_1+m_1(Y_1D_2-Z_1B_2)}{D_1}+\frac{m_2B_2+n_2(Y_2D_1-Z_2B_1)}{D_2}\right),
\end{aligned}
\end{equation}where $Y_jB_j+Z_jC_j\equiv 1(\bmod D_j)$ for $j=1,2$. \\

We have the following result of E.M. Kiral and M. Nakasuji \cite{kiral} which gives a decomposition of $S(n_1,m_2,m_1,n_2;D_1,D_2)$ as a sum of product of two classical $SL_2$ Kloosterman sums.
\begin{theorem}[Kiral, Nakasuji]\label{Kiral} We have
\begin{equation}\label{kiral}
\begin{aligned}
&S(m_2,m_1,n_1,n_2;D_1,D_2)\\
&\quad\quad\quad\quad\quad=\sum_{f|(D_1,D_2)}f\sum_{\substack{y(\bmod f)\\n_1\frac{D_2}{f}+m_1\frac{D_1}{f}y\equiv 0(\bmod f)}}S\left(m_2,M_f(y);\frac{D_1}{f}\right)S\left(n_2,\tilde{M}_f(y);\frac{D_2}{f}\right)\,,
\end{aligned}
\end{equation}where
\[M_f(y)=\frac{n_1D_2+m_1D_1y}{f^2}\,\,\,\,\,\,\hbox{and}\,\,\,\,\,\,\,\tilde{M}_f(y)=\frac{n_1D_2\bar{y}+m_1D_1}{f^2}\,.
\]
\end{theorem}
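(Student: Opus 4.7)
Since the statement is an identity between two explicit exponential sums, my plan is to reduce it to pure bookkeeping starting from the definition \eqref{4.2}. The key observation is that the single congruence $D_1 C_2 + B_1 B_2 + D_2 C_1 \equiv 0 \pmod{D_1 D_2}$ is equivalent, via the Chinese Remainder Theorem handled carefully at the common factor $(D_1,D_2)$, to the pair
$$B_1 B_2 + D_2 C_1 \equiv 0 \pmod{D_1}, \qquad D_1 C_2 + B_1 B_2 \equiv 0 \pmod{D_2},$$
together with the compatibility requirement $-D_2 C_1 \equiv B_1 B_2 \equiv -D_1 C_2 \pmod{(D_1,D_2)}$. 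This compatibility is the only obstacle to fully decoupling the $(B_1,C_1)$ and $(B_2,C_2)$ sums, and will be broken by stratifying according to the size of this common divisor.

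The first step is to stratify the outer sum by a divisor $f \mid (D_1,D_2)$: for each $f$, record the residue $y \pmod f$ such that, roughly, $B_1 B_2 \equiv \ast \cdot y \pmod{(D_1,D_2)}$ in a way consistent with both modular congruences. The constraint $n_1 D_2/f + m_1 D_1 y/f \equiv 0 \pmod f$ that appears in \eqref{kiral} is then exactly the integrality condition needed for $M_f(y)$ and $\tilde M_f(y)$ to make sense; geometrically, $f$ measures how much of $(D_1,D_2)$ is absorbed into the Bruhat parameterization. For each pair $(f,y)$, one may then solve $C_1 \equiv -B_1 B_2 \overline{D_2} \pmod{D_1/f}$ and symmetrically $C_2 \equiv -B_1 B_2 \overline{D_1} \pmod{D_2/f}$, leaving $(B_1,C_1)$ free modulo $D_1/f$ and $(B_2,C_2)$ free modulo $D_2/f$ independently.

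The second step is to substitute these expressions for $C_1, C_2$ into the phase. Using $Y_j B_j + Z_j C_j \equiv 1 \pmod{D_j}$, one computes $Y_j D_{3-j} - Z_j B_{3-j}$ in terms of $\overline{B_j}$ (on the locus $(B_j,D_j/f)=1$ that survives from the original coprimality $(B_j,C_j,D_j)=1$), and collects the contributions of $n_1 B_1$, $m_1 Y_1 D_2$, $m_1 Z_1 B_2$, $m_2 B_2$, $n_2 Y_2 D_1$, $n_2 Z_2 B_1$. After grouping, the phase splits as $[m_2 B_2 + M_f(y)\overline{B_2}]/(D_1/f) + [n_2 B_1' + \tilde M_f(y)\overline{B_1'}]/(D_2/f)$ up to the indicated renaming, producing exactly $S(m_2, M_f(y); D_1/f)\,S(n_2, \tilde M_f(y); D_2/f)$; the remaining factor of $f$ out front measures the fiber of the map $(C_1 \bmod D_1,\ C_2 \bmod D_2) \mapsto y$.

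The main obstacle is the bookkeeping in the second step: one has to verify that the coprimality condition $(B_j,C_j,D_j)=1$ translates cleanly to $(B_j,D_j/f)=1$ under the substitution (so that the $\overline{B_j}$ on the right-hand side actually exist), that no cross-term in $B_1 B_2$ survives after invoking $Y_j B_j + Z_j C_j \equiv 1$, and that the asymmetric appearance of $\bar y$ in $\tilde M_f(y)$ versus $y$ in $M_f(y)$ arises correctly from interchanging the roles of $D_1$ and $D_2$ in the two CRT reductions. Once this is done, the resulting identity agrees term-by-term with \eqref{kiral}, and the coprime special case $(D_1,D_2)=1$ becomes the visible $f=1$ term, recovering \eqref{2.4} from the sketch.
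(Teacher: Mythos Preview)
The paper does not give its own proof of this statement: Theorem~\ref{Kiral} is quoted verbatim as a result of Kiral and Nakasuji \cite{kiral}, with no argument supplied, so there is no in-paper proof to compare your proposal against.

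That said, your outline is essentially the shape of the argument in \cite{kiral}: one parametrizes the summation set in \eqref{4.2} via the Bruhat decomposition, stratifies by the divisor $f\mid(D_1,D_2)$ controlling the overlap in the congruence $D_1C_2+B_1B_2+D_2C_1\equiv 0\pmod{D_1D_2}$, and then verifies that on each stratum the phase factorizes into two classical Kloosterman sums. Your identification of the main bookkeeping hazards (the coprimality condition $(B_j,C_j,D_j)=1$ reducing to invertibility modulo $D_j/f$, the cancellation of the $B_1B_2$ cross-term via $Y_jB_j+Z_jC_j\equiv 1$, and the asymmetric $y$ versus $\bar y$ in $M_f$ and $\tilde M_f$) is accurate; these are exactly the points where the Kiral--Nakasuji argument requires care. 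As a sketch your proposal is sound, though it stops short of carrying out the substitutions explicitly---which is where the real work lies.
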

\subsection{Integral kernels.} Following  \cite[Theorem 2 \& 3]{butt}, we define the following integral kernels in terms of Mellin-Barnes representations. For $s\in\mathbb{C}, \mu\in\Lambda_{\infty}$, define the meromorphic function 
\begin{equation}
\tilde{G}^{\pm}(s,\mu):=\frac{\pi^{-3s}}{12288\pi^{7/2}}\left(\prod_{j=1}^{3}\frac{\Gamma(\frac{1}{2}(s-\mu_j))}{\Gamma(\frac{1}{2}(1-s+\mu_j))}\pm i\prod_{j=1}^{3}\frac{\Gamma(\frac{1}{2}(1+s-\mu_j))}{\Gamma(\frac{1}{2}(2-s+\mu_j))}\right)\,,
\end{equation} and for $s=(s_1,s_2)\in\mathbb{C}^2,\mu\in\Lambda_{\infty}$, define the meromorphic function
\begin{equation}\label{55}
G(s,\mu):=\frac{1}{\Gamma(s_1+s_2)}\prod_{j=1}^{3}\Gamma(s_1-\mu_j)\Gamma(s_2+\mu_j).
\end{equation} The latter is essentially the double Mellin transform of the $GL(3)$ Whittaker function. We also define the following trigonometric functions :
\begin{equation}\label{56}
\begin{aligned}
&S^{++}(s,\mu):=\frac{1}{24\pi^2}\prod_{j=1}^{3}\cos \left(\frac{3}{2}\pi\nu_j\right),\\
&S^{+-}(s,\mu):=-\frac{1}{32}\frac{\cos(\frac{3}{2}\pi\nu_2)\sin(\pi(s_1-\mu_1))\sin(\pi(s_2+\mu_2))\sin(\pi(s_2+\mu_3))}{\sin(\frac{3}{2}\pi\nu_1)\sin(\frac{3}{2}\pi\nu_3)\sin(\pi(s_1+s_2)) },\\
&S^{-+}(s,\mu):=-\frac{1}{32}\frac{\cos(\frac{3}{2}\pi\nu_1)\sin(\pi(s_1-\mu_1))\sin(\pi(s_1-\mu_2))\sin(\pi(s_2+\mu_3))}{\sin(\frac{3}{2}\pi\nu_2)\sin(\frac{3}{2}\pi\nu_3)\sin(\pi(s_1+s_2)) },\\
&S^{--}(s,\mu):=-\frac{1}{32}\frac{\cos(\frac{3}{2}\pi\nu_3)\sin(\pi(s_1-\mu_2))\sin(\pi(s_2+\mu_2))}{\sin(\frac{3}{2}\pi\nu_2)\sin(\frac{3}{2}\pi\nu_1) }.\\
\end{aligned}
\end{equation}
For $y\in\mathbb{R}\setminus{0} $ with $\hbox{sgn}(y)=\epsilon $, define
\begin{equation}
K_{w_4}(y;\mu):=\int_{-i\infty}^{i\infty}|y|^{-s}\tilde{G}^{\epsilon}(y)(s,\mu )\frac{ds}{2\pi i}.
\end{equation} and for $y=(y_1,y_2)\in(\mathbb{R}\setminus {0})^2$ with $\hbox{sgn}(y_1)=\epsilon_1, \hbox{sgn}(y_2)=\epsilon_2$, define
\begin{equation}\label{58}
K_{w_6}^{\epsilon_1,\epsilon_2}(y;\mu)=\int_{-i\infty}^{i\infty}\int_{-i\infty}^{i\infty}|4\pi^2y_1|^{-s_1}|4\pi^2y_2|^{-s_2}G(s,\mu)S^{\epsilon_1,\epsilon_2}(s,\mu)\frac{ds_1ds_2}{(2\pi i)^2}\,.
\end{equation}
\subsection{Normalising factors}
The following normalising appears in Kuznetsov formula :
\begin{equation}
N(\pi):= \|\phi\|^2\prod_{j=1}^{3}\cos \left(\frac{3}{2}\pi\nu_{\pi,j}\right)\,,
\end{equation}where $\phi$ is the arithmetically normalized Maass form generating $\mu$. That is, 
\begin{equation}
\phi(z)=\sum_{\gamma\in U\setminus SL_2(\mathbb{Z})}\sum_{m_1=1}^{\infty}\sum_{m_2\neq 0}\frac{A_{\pi}(m_1,m_2)}{|m_1m_2|}\mathcal{W}_{\nu}^{\hbox{sgn}(m_2)}\left(\left(\begin{smallmatrix}
|m_1m_2| & &  \\
& m_1 & \\
& & 1
\end{smallmatrix}\right)
\begin{smallmatrix}
\gamma & \\
& 1
\end{smallmatrix} z
\right),
\end{equation} where $ U=\left\{\left(\begin{smallmatrix} 1 & * \\ & 1 \end{smallmatrix}\right)\in SL_2(\mathbb{Z})\right\} $ and $ \mathcal{W}_{\nu}^{\pm}(z)=e(x_1\pm x_2)W_{\nu }^{*}(y_1,y_2) $, where $W_{\nu}^{*}$ is the standard completed Whittaker function as in [\cite{goldfeld}, Def. 5.9.2] , and $A_{\pi }(1,1)=1$.  By Rankin-Selberg theory in combination with Stade’s formula (see e.g. \cite[Section 4]{Bl2}) and \cite[Theorem 2]{Li1}), it follows that
\begin{equation}
N(\pi)\asymp \hbox{res}_{s=1}L(s,\pi\times \tilde{\pi})\ll_{\epsilon}\|\mu_{\pi}\|^{\epsilon}\,.
\end{equation}
\\
The spectral measure is defined by 
\begin{equation}
\hbox{spec}(\mu)d\mu, \,\,\,\,\,\,\,\,\hbox{spec}(\mu)d\mu :=\prod_{i=1}^{3}\left(3\nu_j\tan \left(\frac{3\pi}{2}\nu_j\right)\right)\,,
\end{equation}where $d\mu=d\mu_1d\mu_2=d\mu_2d\mu_3=d\mu_1d\mu_3.$

With the above notations, we now state the Kuznetsov formula in the version of (\cite{butt}, Theorem 2,3 \& 4). 
\subsection{The Kuznetsov formula.} Let $n_1,n_2,m_1,m_2\in\mathbb{N}$ and let $h$ be a function that is holomorphic on $\Lambda_{1/2+\delta}$ for some $\delta>0$, symmetric under the Weyl group, rapidly decreasing as $|\Im{\mu_j}|\to\infty$ and satisfies
\begin{equation}
h(3\nu_j\pm 1)=0,\,\,\,\,j=1,2,3.
\end{equation}Then we have
\begin{equation}\label{4.14}
\mathcal{C}+\mathcal{E}_{\min}+\mathcal{E}_{\max}=\textstyle\Delta+\sum_4+\sum_5+\sum_6\,,
\end{equation}
where
\begin{equation}\label{emin}
\begin{aligned}
&\mathcal{C}=\sum_{j}\frac{h(\pi_j)}{N_j}A_{j}(m_1,m_2)\overline{A_{j}(n_1,n_2)}\,,\\
& \mathcal{E}_{\min}=\frac{1}{24(2\pi i)^2}\int\int_{\Re(\mu)=0}\frac{h(\mu)}{N_{\mu}}A_{\mu}(m_1,m_2)\overline{A_{\mu}(n_1,n_2)}d\mu_1 d\mu_2,\\
& \mathcal{E}_{\max}=\frac{1}{2\pi i}\sum_{g}\int_{\Re(\mu)=0}\frac{h(\mu+\mu_g,\mu-\mu_g,-2\mu)}{N_{\mu,g}}B_{\mu,g}(m_1,m_2)\overline{B_{\mu,g}(n_1,n_2)}d\mu,
\end{aligned}
\end{equation}

and 
\[
\begin{aligned}
&\Delta=\delta_{\substack{m_1=n_1\\m_2=n_2}}\int_{\Re(\mu)=0}h(\mu)\hbox{spec}(\mu)d\mu\,\,,\\
&\hbox{$\sum_4$}=\sum_{\epsilon=\pm 1}\sum_{\substack{D_2|D_1\\n_2D_1=m_1D_2^2}}\frac{\tilde{S}\left(-\epsilon m_2,n_2,n_1:D_1,D_2\right)}{D_1D_2}\Phi_{w_4}\left(\frac{\epsilon n_1n_2m_2}{D_1D_2}\right)\,\,,\\
&\hbox{$\sum_5$}=\sum_{\epsilon=\pm 1}\sum_{\substack{D_1|D_2\\n_1D_2=m_2D_1^2}}\frac{\tilde{S}\left(\epsilon m_1,n_1,n_2:D_1,D_2\right)}{D_1D_2}\Phi_{w_5}\left(\frac{\epsilon m_1n_1n_2}{D_1D_2}\right)\,\,,\\
&\hbox{$\sum_6$}=\sum_{\epsilon _1,\epsilon _2=\pm 1}\sum_{D_1,D_2}\frac{S\left(\epsilon _2m_2,\epsilon _1m_1,n_1,n_2;D_1,D_2\right)}{D_1D_2}\Phi_{w_6}\left(-\frac{\epsilon _2n_1m_2D_2}{D_1^2},-\frac{\epsilon _1n_2m_1D_1}{D_2^2}\right)\,,
\end{aligned}
\]and
\begin{equation}\label{defphi}
\begin{aligned}
&\Phi_{w_4}(y)=\int_{\Re\mu=0}h(\mu)K_{w_4}(y;\mu )\hbox{spec}(\mu )d\mu,\\
&\Phi_{w_5}(y)=\int_{\Re\mu=0}h(\mu)K_{w_4}(-y;-\mu )\hbox{spec}(\mu )d\mu,\\
&\Phi_{w_6}(y_1,y_2)=\int_{\Re\mu =0}h(\mu )K_{w_6}^{\text{sgn}(y_1),\text{sgn}(y_2)}((y_1,y_2);\mu)\text{spec}(\mu)d\mu\,.
\end{aligned}
\end{equation}Here $A_{\mu}(m_1,m_2)$ denotes the Fourier coefficients of the minimal Eisenstein series $E(z,\mu)$ and $B_{\mu,g}(m_1,m_2)$ denotes the Fourier coefficient of $E_{P_{2,1}}(z,\mu,g)$, the maximal Eisenstien series twisted by Maass form $g$ (see Goldfeld's text \cite{goldfeld}, eqn. (10.4.1), (10.11.1)). We ignore the positive contribution of $\mathcal{E}_{\min}$ and $\mathcal{E}_{\max}$.


\subsection{Integral representations}\label{intrep}We quote the following alternate expressions for the kernel functions given in definition \eqref{58} in terms of the Bessel functions. See section 5 of \cite{Bl1} for details. The stationary phase analysis seems to work better for these representations (at least in our set-up), and we will mainly work with these.

For $y_1,y_2\in \mathbb{R}\setminus \{0\}$ and $\mu\in \Lambda_0$, define
\begin{equation}\label{521}
\begin{aligned}
&\mathcal{J}_1^{\pm}(y,\mu) =\left|\frac{y_1}{y_2}\right|^{\mu_2/2}\int_{0}^{\infty} J^{\pm}_{3\nu_3}\left(|y_1|^{1/2}\sqrt{1+u^2}\right)J^{\pm}_{3\nu_3}\left(|y_2|^{1/2}\sqrt{1+u^{-2}}\right)u^{3\mu_2}\,\frac{du}{u},\\
&\mathcal{J}_2(y,\mu) =\left|\frac{y_1}{y_2}\right|^{\mu_2/2}\int_{1}^{\infty} J^{-}_{3\nu_3}\left(|y_1|^{1/2}\sqrt{u^2-1}\right)J^{-}_{3\nu_3}\left(|y_2|^{1/2}\sqrt{1-u^{-2}}\right)u^{3\mu_2}\,\frac{du}{u},\\
&\mathcal{J}_3(y,\mu) =\left|\frac{y_1}{y_2}\right|^{\mu_2/2}\int_0^{\infty} \tilde{K}_{3\nu_3}\left(|y_1|^{1/2}\sqrt{1+u^2}\right)J^{-}_{3\nu_3}\left(|y_2|^{1/2}\sqrt{1+u^{-2}}\right)u^{3\mu_2}\,\frac{du}{u},\\
&\mathcal{J}_4(y,\mu) =\left|\frac{y_1}{y_2}\right|^{\mu_2/2}\int_{0}^{1} \tilde{K}_{3\nu_3}\left(|y_1|^{1/2}\sqrt{1-u^2}\right)\tilde{K}_{3\nu_3}\left(|y_2|^{1/2}\sqrt{u^{-2}-1}\right)u^{3\mu_2}\,\frac{du}{u},\\
&\mathcal{J}_5(y,\mu) =\left|\frac{y_1}{y_2}\right|^{\mu_2/2}\int_0^{\infty} \tilde{K}_{3\nu_3}\left(|y_1|^{1/2}\sqrt{1+u^2}\right)\tilde{K}_{3\nu_3}\left(|y_2|^{1/2}\sqrt{1+u^{-2}}\right)u^{3\mu_2}\,\frac{du}{u},
\end{aligned}
\end{equation}
 For $y_1,y_2>0$ we have
\begin{equation}\label{522}
K_{w_6}^{++}(y;\mu)=\frac{1}{12\pi^2}\frac{\cos \left(\frac{3}{2}\pi\nu_1\right)\cos \left(\frac{3}{2}\pi\nu_2\right)}{\cos \left(\frac{3}{2}\pi\nu_3\right)}\mathcal{J}_5(y,\mu);
\end{equation}for $y_1>0>y_2$ we have
\begin{equation}\label{523}
\sum_{w\in\{I,w_4,w_5\}}K_{w_6}^{+-}(y;w(\mu))=\frac{1}{24\pi^2}\sum_{w\in\{I,w_4,w_5\}}\left(\mathcal{J}_2(y;w(\mu))+\mathcal{J}_3(y;w(\mu))+\mathcal{J}_4(y;w(\mu))\right);
\end{equation}for $y_2>0>y_1$ we have
\begin{equation}\label{524}
K_{w_6}^{-+}((y_1,y_2);\mu)=K_{w_6}^{+-}((y_2,y_1);w_4(\mu));
\end{equation}and for $y_1,y_2<0$ we have
\begin{equation}\label{525}
\sum_{w\in\{I,w_4,w_5\}}K_{w_6}^{--}(y;w(\mu))=\frac{1}{48\pi^2}\sum_{w\in\{I,w_4,w_5\}}\left(4\mathcal{J}_1^{-}(y;w(\mu))+4\mathcal{J}_1^{+}(y;w(\mu)))\right).
\end{equation}

\section{Analytic properties of the integral transforms}
The first two lemmas, which we directly quote from \cite[Section 7]{Bl1}, will be used to truncare various dual sums.
\begin{lemma}\label{5.1}
Let $0< y\leq T^{3-\epsilon}$. Then for any constant $B\geq 0$ one has
\begin{equation}\label{phi5negsmall}
\Phi_{w_4}(y)\ll_{\epsilon,B} T^{-B}.
\end{equation}
If $T^{3-\epsilon}<|y|$, then 
\begin{equation}
|y|^j\Phi_{w_4}^{(j)}(y)\ll _{j,\epsilon}T^{3+2\epsilon}(T+|y|^{1/3})^j,
\end{equation}for any $j\in \mathbb{N}_0$.
\end{lemma}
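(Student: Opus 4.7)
\emph{Proof plan.} The plan is to work with the Mellin--Barnes representation of the kernel,
\[
K_{w_4}(y;\mu)=\frac{1}{2\pi i}\int_{(0)}|y|^{-s}\,\tilde G^{\epsilon(y)}(s,\mu)\,ds,
\]
substitute into the defining integral \eqref{defphi}, and interchange the order of integration. After pairing against $h(\mu)\,\hbox{spec}(\mu)\,d\mu$ (which contributes size $T^{3+2\epsilon}$ from the spectral localization $\mu=\mu_0+O(T^\epsilon)$ with the Weyl-generic assumption on $\mu_0$), the resulting oscillatory integral can be analyzed on the imaginary $s$-line by Stirling's approximation, uniformly in $\mu$ throughout the window.

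For the first bound, in the regime $0<y\le T^{3-\epsilon}$ the combined Stirling phase reads
\[
\phi(t;u)=-t\log y+\sum_{j=1}^{3}(t-u_j)\bigl(\log|(t-u_j)/2|-1\bigr),\qquad s=it,\ \mu_j=iu_j,
\]
so that $\partial_t\phi=\log\bigl(\prod_j|t-u_j|/(8y)\bigr)$, which is bounded below by $\epsilon\log T$ away from small neighbourhoods of the Gamma poles at $t=u_j$. Outside those neighbourhoods, iterated $t$-integration by parts yields negligible contribution. The residue contributions at $t\approx u_j$ carry a phase $e^{-iu_j\log y}$ that is linear in $u_j$ with slope $\log y\asymp\log T$; iterated $u_j$-integration by parts against the smooth weight $h$ (using $h^{(k)}\ll T^{-k\epsilon}$) then yields arbitrary polynomial decay in $T$. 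The argument parallels \cite[Section 7]{Bl1}.

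For the derivative bound in the regime $|y|>T^{3-\epsilon}$, I differentiate under the Mellin integral, each $\partial_y$ contributing an extra factor $-s/y$. On $\Re s=0$ the Stirling bound restricts the effective support of the integrand to $|\Im s|\ll T$, while the stationary phase equation $\prod_j|t-u_j|\asymp|y|$ forces $|t|\asymp|y|^{1/3}$ in the large-$|y|$ regime, pinning the effective size of $|s|$ at $T+|y|^{1/3}$. Hence each derivative multiplies the integrand by at most $(T+|y|^{1/3})/|y|$; combining with $\int|h|\,\hbox{spec}(\mu)\,d\mu\ll T^{3+2\epsilon}$ yields the claimed $|y|^j\Phi_{w_4}^{(j)}(y)\ll T^{3+2\epsilon}(T+|y|^{1/3})^j$.

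The principal obstacle is the first bound: the Gamma quotients in $\tilde G^{\pm}$ decay only polynomially (not exponentially) in $|\Im s|$, so naive contour shifting to the right merely transfers the estimate without producing decay. Instead, the decay must arise from a joint analysis of the $s$- and $\mu$-oscillations, exploiting the smoothness of $h$ on the $T^\epsilon$-window and the rapid cancellation in $u_j$ after residue extraction near the Gamma poles. This delicate argument is precisely the content of \cite[Section 7]{Bl1}, which we invoke directly.
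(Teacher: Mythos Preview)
Your proposal is correct and aligns with the paper's own treatment: the paper does not prove this lemma at all but simply quotes it from \cite[Section~7]{Bl1}, which is precisely what you invoke at the end. Your added sketch of the Mellin--Barnes/Stirling mechanism is a helpful gloss, though note that a phase derivative of size $\asymp\epsilon\log T$ alone does not yield $T^{-B}$ decay by naive integration by parts---the actual argument in \cite{Bl1} requires contour shifts in $s$ combined with the $\mu$-integration, which you rightly defer to.
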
	
\begin{lemma}\label{5.2}
Let $\Upsilon :=\min (|y_1|^{1/3}|y_2|^{1/2},|y_1|^{1/6}|y_2|^{1/3})$. If $\Upsilon\leq T^{1-\epsilon}$, then
\begin{equation}\label{negsmalllong}
\Phi_{w_6}(y_1,y_2)\ll_{B,\epsilon} T^{-B},
\end{equation}for any fixed constant $B\geq 0$. If $\Upsilon\geq T^{1-\epsilon}$, then
\begin{equation}\label{longweightder}
\begin{aligned}
|y_1|^{j_1}|y_2|^{j_2}&\frac{\partial^{j_1}}{\partial y_1^{j_1}}\frac{\partial^{j_2}}{\partial y_2^{j_2}}\Phi_{w_6}(y_1,y_2)\\
&\ll_{j_1,j_2,\epsilon} T^{3+\epsilon}\left(T+|y_1|^{1/2}+|y_1|^{1/3}|y_2|^{1/6}\right)^{j_1}\left(T+|y_2|^{1/2}+|y_2|^{1/3}|y_1|^{1/6}\right)^{j_2}.
\end{aligned}
\end{equation}
\end{lemma}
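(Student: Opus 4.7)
The plan is to follow the strategy of \cite[Section 7]{Bl1}, working with the Bessel integral representations \eqref{522}--\eqref{525} of $K_{w_6}$ rather than the Mellin--Barnes definition \eqref{58}. Concretely, I would substitute each of the kernels $\mathcal{J}_1^\pm, \mathcal{J}_2, \ldots, \mathcal{J}_5$ from \eqref{521} into the definition \eqref{defphi} of $\Phi_{w_6}$, obtaining a triple integral in $\mu$ and the auxiliary variable $u$. Under $h$, the parameter $\mu$ is localized in a $T^\epsilon$-box around $\mu_0$, so that $3\nu_3$ varies in a $T^\epsilon$-neighborhood of a point of size $\asymp T$.

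Next I would invoke the uniform Bessel asymptotics for large order $\asymp T$: the function $J_{3\nu_3}(x)$ is negligible (i.e.\ $\ll_B T^{-B}$) when $x \leq (1-\epsilon)|3\nu_3|$ and admits a stationary-phase-type oscillatory expansion when $x \geq (1+\epsilon)|3\nu_3|$; the Bessel-$K$ function $\tilde K_{3\nu_3}(x)$ is exponentially small away from its transition region. Applied to each $\mathcal{J}_i$, this forces the $u$-integrand to be non-negligible only in the range where both Bessel arguments $|y_1|^{1/2}\sqrt{1+u^{\pm 2}}$ and $|y_2|^{1/2}\sqrt{1+u^{\mp 2}}$ are $\gtrsim T^{1-\epsilon}$. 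Optimizing over $u$, the system $|y_1|(1+u^2) \gtrsim T^{2-\epsilon}$, $|y_2|(1+u^{-2}) \gtrsim T^{2-\epsilon}$ can only be satisfied when $\Upsilon = \min(|y_1|^{1/3}|y_2|^{1/2}, |y_1|^{1/6}|y_2|^{1/3}) \gtrsim T^{1-\epsilon}$ (balance $u^2 \asymp (|y_2|/|y_1|)^{1/2}$); otherwise $\Phi_{w_6}$ is negligible, which establishes the first claim \eqref{negsmalllong}.

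For the derivative bound \eqref{longweightder} in the non-negligible regime, I would differentiate through the $\mu$ and $u$ integrals. The bound $T^{3+\epsilon}$ records the volume of the $\mu$-integral against $\mathrm{spec}(\mu) \asymp T^3$ on a $T^\epsilon$-cube. Each $y_i$ derivative applied to $J_{3\nu_3}(|y_i|^{1/2}\sqrt{1+u^{\pm 2}})$ produces, via the standard recurrences, a factor whose size is dictated by the frequency of oscillation; combined with the stationary phase in the $u$-integral, which pins $u$ to a value depending on $|y_1|$ and $|y_2|$, the extra factor per $y_i$-derivative is $|y_i|^{-1}(T + |y_i|^{1/2} + |y_i|^{1/3}|y_j|^{1/6})$, exactly matching the claimed bound after using $|y|^{j}\partial_y^{j}$.

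The main technical obstacle is the careful treatment of the transition (turning-point) region where $|y_i|^{1/2}\sqrt{1+u^{\pm 2}} \asymp T$ and where the uniform asymptotics of $J_{3\nu_3}$ and $\tilde K_{3\nu_3}$ are described by Airy functions rather than clean oscillatory/decaying formulas. One must verify that this transition sliver contributes admissibly to both the smallness statement and the derivative bound, and one must execute the analysis uniformly across all five kernel types $\mathcal{J}_1^\pm, \ldots, \mathcal{J}_5$ and across the Weyl translates $w \in \{I, w_4, w_5\}$ appearing in \eqref{523} and \eqref{525}. Since these steps are carried out in full in \cite[Section 7]{Bl1} and the lemma is directly quoted from there, I would simply cite that reference.
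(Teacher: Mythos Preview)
Your proposal is correct and ends up doing exactly what the paper does: the lemma is not proved here but is directly quoted from \cite[Section 7]{Bl1}, and you arrive at the same citation.

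One minor remark on your intermediate sketch: the actual argument in \cite{Bl1} for this lemma proceeds via the Mellin--Barnes representation \eqref{58} (shifting the $s_1,s_2$ contours and using Stirling), not via the Bessel integrals \eqref{521}. Your Bessel-function heuristic is a reasonable alternative picture and indeed recovers the correct cutoff $\Upsilon\gtrsim T^{1-\epsilon}$ after balancing, but the uniform Airy-type analysis in the turning region and across all five kernel types would be somewhat delicate to carry out from scratch; the Mellin--Barnes route in \cite{Bl1} avoids this by reading off the bounds directly from the Gamma factors. Since you correctly defer to \cite[Section 7]{Bl1}, this is only a comment on the sketch, not a gap.
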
for all $j_1,j_2\in \mathbb{N}_0$.\\
\\
The main effort of the paper goes into obtaining non-trivial estimates for integral transforms of the form
\begin{equation}\notag
\begin{aligned}
\mathcal{K}(\theta_1,\theta_2, U,V):=\sum_{\epsilon_1,\epsilon_2=\pm 1}\int_{\mathbb{R}}\int_{\mathbb{R}} \mathcal{U}(x)\mathcal{V}(y)\Phi_{w_6}(\epsilon_1\theta_1x^2,\epsilon_2\theta_2y^2)e(Ux+Vy)\,dx\,dy.
\end{aligned}
\end{equation}By using the integral representations in term of the Bessel functions from \eqref{intrep}, we reduce our problem to estimating each
\begin{equation}
\begin{aligned}
\mathcal{K}_i(\theta_1, \theta_2, U, V)=\int h(\mu)\text{spec}(\mu)\int_{\mathbb{R}}\int_{\mathbb{R}} \mathcal{U}(x)\mathcal{V}(y)\mathcal{J}_{i}(\epsilon_1\theta_1x^2,\epsilon_2\theta_2y^2;\mu)e(Ux+Vy)\,\,dx\,dy\,d\mu,
\end{aligned}
\end{equation}where $\mathcal{J}_i$ as defined in \eqref{521}. In this paper, we will restrict our attention to $\mathcal{K}_4(\theta_1,\theta_2, U,V)$ and provide a detailed analysis for the same. The arguments are robust and can be easily adapted to the remaining ones which provide us with the same (or smaller) contributions.
\begin{theorem}\label{average}
Let $\mu=(\mu_1,\mu_2,\mu_3)$ be in generic position with $||\mu||\asymp T$. For $U,V\in \mathbb{R},\,\theta_1,\theta_2 >0$, denote
\begin{equation}\notag
c_1:= \frac{\pi |U|}{\theta_1^{1/2}},\,\,c_2:=\frac{\pi |V|}{\theta_2^{1/2}},\, A:=|c_1-1|.
\end{equation}
Suppose 
\begin{equation}\label{uvup}
\min\{\theta_1^{1/3}\theta_2^{1/6},\theta_1^{1/6}\theta_2^{1/3}\}\gg T^{1-\epsilon},\,\, U\ll \theta_1^{1/2}+\theta_1^{1/3}\theta_2^{1/6},\,\,\,V\ll \theta_2^{1/2}+\theta_2^{1/3}\theta_1^{1/6}.
\end{equation}Then $\mathcal{K}_4(\theta_1, \theta_2, U, V)$ is negligibly small unless 
\begin{equation}\label{t1>t2}
\theta_1\gg \theta_2,
\end{equation} in which case,
\begin{equation}\label{mainest}
\begin{aligned}
(\theta_1/\theta_2)^{1/6}\mathcal{K}_4(\theta_1,\theta_2, U,V)\ll T^{1-1/140}+T\delta_{\min\{c_1,c_2\}\ll T^{-1/140}}+T\delta_{A\ll T^{-1/140}}
\end{aligned}
\end{equation}Furthermore, if one of $U$ or $V$ is zero, we have
\begin{equation}\label{v=0}
\begin{aligned}
(\theta_1/\theta_2)^{1/6}\mathcal{K}_4(\theta_1,\theta_2, U,0)\ll T^{1-1/140}+T\delta_{c_1\ll T^{-1/140}}+T\delta_{A\ll T^{-1/140}},
\end{aligned}
\end{equation}and
\begin{equation}\notag
(\theta_1/\theta_2)^{1/6}\mathcal{K}_4(\theta_1,\theta_2, 0, V)\ll T^{1-1/140}+T\delta_{c_2\ll T^{-1/140}}.
\end{equation}
\end{theorem}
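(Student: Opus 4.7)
The plan is to prove Theorem \ref{average} by opening the two $\widetilde{K}$-Bessel factors in $\mathcal{J}_4$ via their oscillatory expansions, applying stationary phase successively in $x$, $y$, and $u$, and estimating the residual two-dimensional $\mu$-integral by the Van der Corput bound of Lemma \ref{s3}. Substituting \eqref{521} into the definition of $\mathcal{K}_4$ gives a five-fold oscillatory integral in $(x,y,u,\mu_1,\mu_2)$. Under the size condition \eqref{uvup}, both Bessel arguments sit near the transition zone $|3\nu_3|\asymp T$, so I place myself in the oscillatory regime and write each $\widetilde{K}_{3\nu_3}(z)=\sum_{\pm}W_{\pm}(z;\nu_3)\,e\!\bigl(\pm\Psi_{\mathrm{Bes}}(z;\nu_3)\bigr)$ with $W_{\pm}$ smooth weights of size $T^{-1/2}$ and $\Psi_{\mathrm{Bes}}$ the standard Bessel phase; one sign choice then determines all others by symmetry.

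As a function of $x$, the leading phase is $Ux\pm\theta_1^{1/2}|x|\sqrt{1-u^2}$; Lemma \ref{s2} localises $x$ about a stationary point provided $c_1<1$, forcing $u\approx\sqrt{1-c_1^2}$. Analogously, the $y$-stationary phase pins $u\approx(1+c_2^2)^{-1/2}$, compatibly up to perturbations of order $\mu/T$. Each of these two steps produces a gain of $(T/\theta_i^{1/2})^{1/2}$, and their combined effect generates precisely the factor $(\theta_2/\theta_1)^{1/6}$ that appears normalised as $(\theta_1/\theta_2)^{1/6}$ on the left side of \eqref{mainest}.

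With $x$ and $y$ substituted at their stationary values, the $u$-integral carries the log-phase $3\mu_2\log u$ together with the residual Bessel phases, now functions of $u$ and $(c_1,c_2)$. A third application of Lemma \ref{s2} localises $u$ at $u_0=u_0(\mu_2/T;c_1,c_2)$, defined by an algebraic equation, and substitution leaves a two-dimensional integral in $(\mu_1,\mu_2)$. The $\mu_1$-dependence is controlled by a direct stationary-phase or integration-by-parts step, leaving a one-dimensional integral in $\mu_2$ whose phase has derivative of the form $A\cdot P(\mu_2/T)/f(\mu_2/T)$ for a univariate polynomial $P$ of bounded degree and a smooth $f\gg 1$. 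Lemma \ref{s3} then delivers the saving $T^{-1/140}$, with the exponent dictated by the polynomial degree together with the lower bounds on $A$ and the remaining parameters.

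The degenerate regimes are handled by trivial estimates. If $c_i\ll T^{-1/140}$ then no interior stationary point in $x$ or $y$ exists; the bound $\Phi_{w_6}\ll T^{3+\epsilon}$ from Lemma \ref{5.2}, combined with the $(\theta_1/\theta_2)^{1/6}$ normalisation, yields the $T\,\delta_{\min\{c_1,c_2\}\ll T^{-1/140}}$ contribution. If $A=|c_1-1|\ll T^{-1/140}$ the $x$-stationary point collides with the endpoint $u=0$ of the $u$-integral, producing the degenerate term $T\,\delta_{A\ll T^{-1/140}}$. The cases $U=0$ or $V=0$ of \eqref{v=0} follow by removing one linear-phase stationary phase; the assumption $\theta_1\gg\theta_2$ emerges because otherwise one Bessel argument escapes its oscillatory range and repeated integration by parts renders $\mathcal{K}_4$ negligible. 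The principal obstacle of the proof lies in Step 3: the algebra needed to rewrite the $\mu$-phase derivative as a polynomial-rational quantity in a single variable is intricate --- essentially the ``degree $13$ polynomial'' phenomenon alluded to in the introduction --- and it is this algebraic bottleneck that caps the final saving at $T^{-1/140}$ in place of the idealistic $T^{-1/2}$.
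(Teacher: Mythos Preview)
Your high-level strategy has the key saving coming from the wrong integral. In the paper the spectral weight $h(\mu)$ confines $\mu$ to a ball of radius $O(T^{\epsilon})$ about $\mu_0$; over such a short range the total phase varies only by $O(T^{\epsilon}\log T)$, so no application of Lemma~\ref{s3} to the $\mu_2$-integral can extract a power saving like $T^{-1/140}$. What the paper actually does is the reverse of your order: it uses the $\mu_2$-integral (via $u^{3\mu_2}$ and integration by parts) merely to localise $u$ to $u\approx(\theta_2/\theta_1)^{1/6}$ --- this, combined with the constraint $u\le 1$ in the definition of $\mathcal{J}_4$, is the correct source of the condition $\theta_1\gg\theta_2$, not a Bessel-range argument --- and then bounds the $\mu$-integral trivially by $T^{3+\epsilon}$. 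All the non-trivial work and the Van der Corput saving happen in the $u$-integral.

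Concretely, the paper treats $I(U,\alpha(u))$ and $\tilde I(V,\beta(u))$ via the integral representation of $\tilde K_{iT_1}$ (Appendix Lemma~\ref{11.1}), not via a large-argument asymptotic with linear phase; your claimed $x$-phase ``$Ux\pm\theta_1^{1/2}|x|\sqrt{1-u^2}$'' is only valid far from the transition regime, whereas here the Bessel argument and index are both of size $T$. After the Appendix lemma, one is left with a single $u$-integral (equation~\eqref{uint}) whose phase $\phi_{k,l}(u)$ is studied by the change of variable $t=\sqrt{c_2^2-\tilde c_0(u^{-2}-1)}$; it is in this $t$-variable, not in $\mu_2$, that the derivative $\psi'(t)$ is shown to be $K_0\cdot P(t)/h(t)$ with $P$ a degree~$13$ monic polynomial and $K_0\asymp A\,T^{\delta_1-12\delta_2}$. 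Lemma~\ref{s3} is applied to this $t$-integral, and the elaborate case analysis in the parameters $\delta_1,\delta_2,\delta_3,\delta_4,R_1,R_2$ (which your sketch omits entirely) is what forces the exponent $1/140$ after balancing the parameters $\beta,\gamma$. The degenerate contributions with $\delta$-symbols do not come from endpoint collisions in your sense but from sub-cases of this parameter analysis where one falls back on the trivial length bound for the $u$-integral.
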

\begin{proof}
See section \ref{pfaverage}.
\end{proof}
 If both $U=V=0$, we have the following estimate for our original integral transform $ \mathcal{K}(\theta_1,\theta_2, U,V)$.
\begin{theorem}\label{dia}
We have
\begin{equation}\label{68}
\sum_{\substack{f\geq 1\\(f,F)=1}}\frac{1}{f}\mathcal{K}\left(\frac{\theta_1}{f},\frac{\theta_2}{f},0,0\right)\ll_{\epsilon}T^{3+4\epsilon}(\theta_1\theta_2)^{-1/2}.
\end{equation}for any positive integer $F$.
\end{theorem}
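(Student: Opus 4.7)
The plan is to open the double Mellin--Barnes representation \eqref{58} for $K_{w_6}^{\epsilon_1,\epsilon_2}$, interchange the summation and integration orders, and execute the $f$-sum in closed form. Substituting \eqref{58} into $\mathcal{K}(\theta_1/f,\theta_2/f,0,0)$, the inner $x$- and $y$-integrals become Mellin transforms
\[
\int_0^\infty \mathcal{U}(x)\,x^{-2s_1}\,dx = \widehat{\mathcal{U}}(1-2s_1), \qquad \int_0^\infty \mathcal{V}(y)\,y^{-2s_2}\,dy = \widehat{\mathcal{V}}(1-2s_2),
\]
which are entire in $s_1,s_2$ and Schwartz-class on vertical lines. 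With the contour initially placed at $\Re(s_1+s_2)<0$, the $f$-sum evaluates to
\[
\sum_{(f,F)=1} f^{s_1+s_2-1} = \zeta(1-s_1-s_2)\prod_{p\mid F}(1-p^{s_1+s_2-1}).
\]
This rewrites the left-hand side of \eqref{68} as a triple integral: a double Mellin--Barnes in $(s_1,s_2)$ coupled with the $\mu$-integral against $h(\mu)\text{spec}(\mu)\,d\mu$, whose integrand carries the factor $\theta_1^{-s_1}\theta_2^{-s_2}$.

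The decisive step is to shift the $(s_1,s_2)$-contour to $\Re s_1=\Re s_2=1/2+\epsilon$, which produces the gain $(\theta_1\theta_2)^{-1/2-\epsilon}$ demanded by the theorem. In the shift one crosses $s_1+s_2=0$, where $\zeta(1-s_1-s_2)$ has a simple pole, as well as the lines $\Re s_1=0$ and $\Re s_2=0$ which carry poles of the gamma factors in $G(s,\mu)$. The critical input is the zero of the Mellin transform of the Kuznetsov kernel referenced in \cite{Bl1}: for $K_{w_6}^{++}$ and $K_{w_6}^{--}$ the zero is transparent, since $G(s,\mu)$ contains the factor $1/\Gamma(s_1+s_2)$ which vanishes simply at $s_1+s_2=0$ and precisely annihilates the zeta pole. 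For the mixed kernels $K_{w_6}^{+-}$ and $K_{w_6}^{-+}$, the factor $1/\sin(\pi(s_1+s_2))$ in $S^{\pm\mp}$ combines with $1/\Gamma(s_1+s_2)$ via $1/(\Gamma(s)\sin(\pi s))=\Gamma(1-s)/\pi$ to absorb the $1/\sin$ singularity, but the $\zeta$-pole persists; the missing zero emerges only after the Weyl-symmetrisation of \eqref{523}--\eqref{524} is carried out, using $\mu_1+\mu_2+\mu_3=0$ to witness a cancellation in the symmetrised trigonometric numerator. The residues at $s_1=\mu_j$ and $s_2=-\mu_j$ lie on the imaginary axis, are localised by the decay of $\widehat{\mathcal{U}},\widehat{\mathcal{V}}$, and contribute at most $T^{3+\epsilon}(\theta_1\theta_2)^{-1/2}$ by a direct Stirling estimate.

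On the final contour $\Re s_i=1/2+\epsilon$, I would estimate the integrand pointwise using Stirling. The exponential growth of $S^{\epsilon_1,\epsilon_2}(s,\mu)$ coming from the cosines and sines of $(3/2)\pi\nu_j$ with $\nu_j$ of size $T$ is balanced precisely by the exponential decay of the gamma product $\prod_j\Gamma(s_1-\mu_j)\Gamma(s_2+\mu_j)$ away from the windows $\Im s_1\approx \Im\mu_j$ and $\Im s_2\approx -\Im\mu_j$; those windows have measure $O(T^\epsilon)$ and produce only a polynomial-in-$T$ contribution. Combined with the rapid decay of $\widehat{\mathcal{U}},\widehat{\mathcal{V}}$ in $|\Im s_i|$, the double Mellin--Barnes is $O(T^\epsilon)$. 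The $\mu$-integral against $h(\mu)\text{spec}(\mu)\,d\mu$ is $O(T^{3+\epsilon})$ on the support of $h$, and together with the factor $(\theta_1\theta_2)^{-1/2-\epsilon}$ this yields the claimed bound $T^{3+4\epsilon}(\theta_1\theta_2)^{-1/2}$.

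The main obstacle is identifying the hidden zero at $s_1+s_2=0$ in the Weyl-symmetrised kernel for the mixed sign cases. Unlike the $++$ and $--$ cases, where the cancellation is explicit from the $1/\Gamma(s_1+s_2)$ factor in $G$, in the mixed cases it only becomes visible after combining all three Weyl summands and using non-trivial trigonometric identities in the variables $\nu_j$; this is precisely the delicate manipulation highlighted in the sketch of the paper. The other ingredients---the Stirling estimates on the shifted contour, the treatment of the Euler factors at primes $p\mid F$, and the residue bounds at $s_1=\mu_j$ and $s_2=-\mu_j$---are routine by comparison.
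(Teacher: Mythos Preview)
Your proposal follows the same route as the paper: open the Mellin--Barnes representation \eqref{58}, execute the $f$-sum to produce $\zeta(1-s_1-s_2)$ times the finite Euler product at primes dividing $F$, shift the $(s_1,s_2)$-contours to $\Re s_i\approx 1/2$, show that the residue at $s_1+s_2=0$ vanishes up to a negligible error after summing over all sign choices and Weyl elements, and finish by Stirling on the shifted contour.

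The one point where your description diverges from the paper is the mechanism of the mixed-sign cancellation. The paper does not invoke a trigonometric identity in the $\nu_j$ or the relation $\mu_1+\mu_2+\mu_3=0$; instead it evaluates $G((s,-s),\mu)S^{\pm\mp}((s,-s),w(\mu))$ explicitly as a rational function of $s,\mu$ multiplied by a ratio of $\cos,\sin$ of the quantities $\tfrac{\pi}{2}(\mu_i-\mu_j)$. Since $\mu$ is in generic position with $|\Im\mu_j|\asymp T$, this ratio is exponentially small unless $\Im\mu_1,\Im\mu_2,\Im\mu_3$ lie in a specific ordering. In the surviving orderings the $+-$ residue equals $\tfrac{1}{16(\mu_1-s)(s-\mu_2)(s-\mu_3)}$ and the $-+$ residue equals its negative (for a complementary pair of orderings); summing over the Weyl group then pairs these up and produces an exact cancellation up to $O(T^{-A})$. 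So the cancellation is analytic (exponential smallness plus sign-matching across Weyl chambers) rather than a purely algebraic trigonometric identity.

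A minor point: your bound $T^{3+\epsilon}(\theta_1\theta_2)^{-1/2}$ for the residues at $s_1=\mu_j$, $s_2=-\mu_j$ is far too pessimistic. Since $|\Im\mu_j|\asymp T$ and the Mellin transforms of the compactly supported weights decay rapidly on vertical lines, these residues are $O_A(T^{-A})$; the paper simply folds them into the initial ``negligible error term'' when first shifting to $\Re s_i=-1/2$.
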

•

\begin{proof}
See section \ref{pfdia}
\end{proof}
\section{Applying the Kuznetsov formula}
Before applying the Kuznetsov formula \eqref{4.14} to the spectral sum in \eqref{2.10}, we need to specify the test function used to detect the spectral average.

\subsection*{The test function $h(\mu)$}
We use the same test function specified in \cite{Bl1}. In particular, this test function satisfies the required properties for the Kuznetsov formula, is non-negative on $\Lambda'_{1/2}$, satisfies $h(\mu_0)\gg 1$ and is negligibly small outside $O(T^{\epsilon})$-balls about $w(\mu_0)$ for $w\in \mathcal{W}$, where $\mathcal{W}$ is the Weyl group which acts on $\mu$ by permutations. Firstly let us fix
\begin{equation}
\psi(\mu)=\exp(\mu_1^2+\mu_2^2+\mu_3^2)\,,
\end{equation} and let
\begin{equation}
P(\mu):=\prod_{0\leq n\leq A}\prod_{j=1}^{3}\frac{(\nu_j-\frac{1}{3}(1+2n))(\nu_j+\frac{1}{3}(1+2n))}{|\nu_{0,j}|^2}\,,
\end{equation}for some large fixed constant $A$. This polynomial has zeroes at the poles of the spectral measure, which turns out to be convenient for contour shifts. Now we choose
\begin{equation}
h(\mu):=P(\mu)^2\left(\sum_{w\in\mathcal{W}}\psi\left(\frac{w(\mu)-\mu_0}{T^{\epsilon}}\right)\right)^2\,.
\end{equation}The $T^{\epsilon}$-radius provides us some extra space that will be convenient is later estimations. In particular, we have
\begin{equation}\label{519}
\mathcal{D}_jh(\mu)\ll_j T^{-j\epsilon}\,,
\end{equation} for any differential operator $\mathcal{D}_j$ of order $j$, which we will frequently use while integrating by parts. Moreover, we have
\begin{equation}
\int_{\Re\mu=0} h(\mu)\hbox{spec}(\mu)d\mu\ll T^{3+\epsilon}.
\end{equation}

Applying the $GL(3)$ Kuznetsov formula \eqref{4.14} with test function above, the $\mu_j$ sum in \eqref{2.10} becomes
\begin{equation}\label{kuznetsovdual}
  \sum_{j}\frac{h(\pi_j)}{N_j}A_{\pi_j}(rl_1^{k_2-k_1},ml_1^{k_0})\overline{A_{\pi_j}(rl_2^{k_2-k_1},nl_2^{k_0})} =\textstyle\Delta+\sum_4+\sum_5+\sum_6-\mathcal{E}_{\min}-\mathcal{E}_{\max}\,
\end{equation}where, by abuse of notations, the terms in the right hand side are as defined in \eqref{4.14} with the new variables variables $(m_1,m_2)=(rl_1^{k_2-k_1}, ml_1^{k_0})$, $(n_1,n_2)=(rl_2^{k_2-k_1}, nl_2^{k_0})$. Substituting \eqref{kuznetsovdual} into \eqref{2.10}, we obtain
\begin{equation}\label{s_i}
    \sum_{j}\frac{h(\pi_j)}{N_j}|\tilde{S}_{\bar{k},r,j}(N)|^2=S_0+S_4+S_5+S_6-S_{max}-S_{min},
\end{equation}where $S_0$ denotes the contribution corresponding to $\Delta$, $S_4$ corresponds to $\sum_{4}$, and so on. It remains to estimate each $S_i$. 

\section{The diagonal contribution $S_0$}
We have \[S_0=\sum_{l\asymp L}\sum_{m\asymp N}|\lambda(ml^{k_2})|^2\int_{\Re\mu=0}h(\mu)\text{spec}(\mu) d\mu\]
Substituting the poinwise bound $\lambda(n)\ll n^{\epsilon}$ and using the bound $\text{spec}(\mu)\asymp T^3$ for the spectral measure, we obtain
\begin{equation}\notag
    S_0\ll NT^3L.
\end{equation}
\section{The contribution of $S_6$}
We have
\begin{equation}
\begin{aligned}
S_6=\sum_{\epsilon_1,\epsilon_2=\pm 1}\sum_{l_1,l_2\asymp L}\sum_{D_1,D_2}\frac{1}{D_1D_2}\sum_{ m,n\geq 1}\lambda(ml_1^{k_2})\overline{\lambda(nl_2^{k_2})}S\left(\epsilon_2 ml_1^{k_0},\epsilon_1 rl_1^{k_2-k_1},rl_2^{k_2-k_1},nl_2^{k_0};D_1,D_2\right)\\
 V\left(\frac{ml_1^{k_2}}{N}\right)V\left(\frac{nl_2^{k_2}}{N}\right) \Phi_{w_6}\left(-\frac{\epsilon_2 mrl_1^{k_0}l_2^{k_2-k_1}D_2}{D_1^2},\,-\frac{\epsilon_1 nrl_1^{k_2-k_1}l_2^{k_0}D_1}{D_2^2}\right)
\end{aligned}
\end{equation}We apply Lemma \ref{5.2} with $y_1=ml_1^{k_0}rl_2^{k_2-k_1}D_2/D_1^2$ and $y_2=nl_2^{k_0}rl_1^{k_2-k_1}D_1/D_2^2$ to see that $\Phi_{w_6}(y_1,y_2)$ is negligibly small unless 
\begin{equation}
\frac{1}{D_1^{\frac{1}{2}}}\gg \frac{T}{N^{\frac{1}{2}}r^{\frac{1}{2}}L^{\frac{k_0+k_2-k_1}{2}}}\,\,\,\,\hbox{and}\,\,\,\, \frac{1}{D_2^{\frac{1}{2}}}\gg \frac{T}{N^{\frac{1}{2}}r^{\frac{1}{2}}L^{\frac{k_0+k_2-k_1}{2}}}
\end{equation}i.e.
\begin{equation}\label{modulus}
D_1,D_2\ll \frac{NL^{k_0+k_2-k_1}r}{T^2}=:D_0.
\end{equation}We now substitute the decomposition \eqref{kiral} of the long Weyl Kloosterman sum. After substituting and writing $D_1=fd_1, D_2=fd_2$ and dividing the $d_1,d_2$ sum into dyadic blocks $d_1\asymp H_1\ll D_0/f$ and $ d_2\asymp H_2\ll D_0/f$ , we see that each dyadic box contributes at most
\begin{equation}\label{6.3}
\begin{aligned}
\frac{1}{H_1H_2}\sum_{f}\frac{1}{f}\sum_{\substack{d_1\asymp H_1\\d_2\asymp H_2}}&\sum_{\epsilon_1,\epsilon_2=\pm 1}\sum_{l_1,l_2\asymp L}\sum_{\substack{y(\bmod f)\\f|(rl_2^{k_2-k_1}d_2+\epsilon_1 rl_1^{k_2-k_1}d_1y)}}\\
&\times\sum_{\substack{m,n\asymp N/L^{k_2}}}\lambda(ml_1^{k_2})\overline{\lambda(nl_2^{k_2})}S(\epsilon_2 ml_1^{k_0},M_f(y);d_1)S(\epsilon_1 nl_2^{k_0},\tilde{M}_f(y);d_2)\\
&\times V\left(\frac{ml_1^{k_2}}{N}\right)V\left(\frac{nl_2^{k_2}}{N}\right)\Phi_{w_6}\left(-\frac{\epsilon_2 mrl_1^{k_0}l_2^{k_2-k_1}d_2}{d_1^2f},\,-\frac{\epsilon_1 nrl_1^{k_2-k_1}l_2^{k_0}d_1}{d_2^2f}\right).
\end{aligned}
\end{equation}
To apply the $GL(2)$ Voronoi summation to the $m$ and $n$ sums, we  pull out the $l_1,l_2$ variables using the Hecke relation
\begin{equation}\label{hecke}
\lambda(ml_1^{k_2})=\lambda(m)\lambda(l_1^{k_2})-\left(\sum_{d=1}^{k_2}\lambda\left(\frac{ml_1^{k_2}}{l_1^{2d}}\right)\right)
\end{equation}where the $r$-th term in the bigger parenthesis occurs only if $l_1^{r}|m$. For simplicity, we only estimate the contribution of  the first term. We remark that using \eqref{hecke} recursively the other terms can be handled similarly and provide us with smaller contributions. Hence the $m,n$ sums in \eqref{6.3} is essentially
\begin{equation}\label{m,nsum}
\begin{aligned}
\lambda(l_1^{k_2})\overline{\lambda(l_2^{k_2})}&\sum_{\substack{m,n\asymp N/L^{k_2}}}\lambda(m)\overline{\lambda(n)}S(\epsilon_2 ml_1^{k_0},M_f(y);d_1)S(\epsilon_1 nl_2^{k_0},\tilde{M}_f(y);d_2)\\
&\times V\left(\frac{ml_1^{k_2}}{N}\right)V\left(\frac{nl_2^{k_2}}{N}\right)\Phi_{w_6}\left(-\frac{\epsilon_2 mrl_1^{k_0}l_2^{k_2-k_1}d_2}{d_1^2f},\,-\frac{\epsilon_1 nrl_1^{k_2-k_1}l_2^{k_0}d_1}{d_2^2f}\right).
\end{aligned}
\end{equation}We now apply $GL(2)$ Voronoi to the $m$ and $n$ sum. We carry out the details for $f$ cuspidal and $(d_1, l_1)=(d_2, l_2)=1$ in this section. The case with $l_1|d_1$ or $l_2|d_2$ has smaller contribution due to lowering of the conductor and can be carried out in a similar fashion. The case $\lambda(n)=d(n)$ contains an additional main term coming from the summation formula for $d(n)$ and is addressed in the next section. Note that the Kloosterman sums get transformed into Ramanujan sums, saving the whole modulus $d_1$ and $d_2$. This is a crucial point in the proof. The dual sum is essentially of the form
\begin{equation}\label{6.6}
\frac{N^2}{L^{2k_2}d_2d_1}\sum_{\substack{c_2|d_2\\c_1|d_1}}c_1c_2\sum_{\substack{\tilde{m}\\c_2|(\tilde{m}+l_1^{k_0}M_f(y))}}\,\,\sum_{\substack{\tilde{n}\\c_1|(\tilde{n}+l_2^{k_0}\tilde{M}_f(y))}}\lambda(\tilde{m})\overline{\lambda(\tilde{n})}I(\tilde{m},\tilde{n})\,,
\end{equation}where 
\begin{equation}
I(\tilde{m},\tilde{n})=\sum_{\epsilon_1,\epsilon_2=\pm 1}\int\int V(x)V(y)\Phi_{w_6}(-\epsilon_2\theta_1x,-\epsilon_1\theta_2y)J_{k_f-1}\left(U\sqrt{x}\right)J_{k_f-1}\left(V\sqrt{y}\right)dx\,dy,
\end{equation}where
\begin{equation}\label{7.9}
\begin{aligned}
\theta_1=\frac{Nrl_1^{k_0-k_2}l_2^{k_2-k_1}d_2}{d_1^2f} ,\quad\quad\quad\quad U=\frac{4\pi\sqrt{N\tilde{m}}}{l_1^{k_2/2}d_1},  \\
\theta_2=\frac{Nrl_1^{k_2-k_1}l_2^{k_0-k_2}d_1}{d_2^2f},\quad\quad\quad\quad V=\frac{4\pi\sqrt{N\tilde{n}}}{l_2^{k_2/2}d_2}.
\end{aligned}
\end{equation} 
Extracting the oscillations from the Bessel functions using \eqref{bes} we see that $I(\tilde{m},\tilde{n})$ is essentially a sum of four terms of the form
\begin{equation}\label{extracttion}
|UV|^{-1/2}\sum_{\epsilon_1,\epsilon_2=\pm 1}\int\int W(x)W(y)\Phi_{w_6}(-\epsilon_2\theta_1x,-\epsilon_1\theta_2y)e(\pm U\sqrt{x}\pm V\sqrt{y})dx\,dy,
\end{equation}where $W(x)$ is the updated smooth weight function which by \eqref{besselweight} satisfies $W^{(j)}(x)\ll_{j} 1$. By  \eqref{longweightder} and repeated integration by parts in the $x,y$ integral, it follows that $I(\tilde{m},\tilde{n})$ is negligibly small unless
\begin{equation}\notag
\begin{aligned}
U\ll T+\theta_1^{1/2}+\theta_1^{1/3}\theta_2^{1/6},\,\,V\ll T+\theta_2^{1/2}+\theta_1^{1/6}\theta_2^{1/3},
\end{aligned}
\end{equation}which by \eqref{negsmalllong} implies
\begin{equation}\notag
U,V\ll \theta_1^{1/2}+\theta_2^{1/2}.
\end{equation}Substituting the values \eqref{7.9}, we obtain
\begin{equation}\label{7.8}
\tilde{m},\tilde{n}\ll \frac{L^{k_0+k_2-k_1}r(d_1+d_2)}{f}:=M_0.
\end{equation}Substituting the definition \eqref{defphi} into \eqref{extracttion} we obtain
\begin{equation}\label{7.10}
I(\tilde{m},\tilde{n})\ll |UV|^{-1/2}|\mathcal{K}\left(\theta_1,\theta_2,\pm U,\pm V\right)|
\end{equation}where
\begin{equation}\label{inte}
\begin{aligned}
\mathcal{K}(\theta_1,\theta_2, U,V)&=\sum_{\epsilon_1,\epsilon_2\in \pm 1}\int h(\mu)\text{spec}(\mu)\\
&\,\,\,\,\,\,\,\,\int_{\mathbb{R}}\int_{\mathbb{R}} 2xW(x^2)2yW(y^2)K_{w_6}^{\epsilon_1,\epsilon_2}(\epsilon_1\theta_1x^2,\epsilon_2\theta_2y^2;\mu)e( Ux+ Vy)\,\,\,dx\,dy\,d\mu.
\end{aligned}
\end{equation}Now by the integral representations given in section \ref{intrep}, we get
\begin{equation}\notag
\mathcal{K}(\theta_1,\theta_2, U,V)\ll\sum_{i=1}^{5}|\mathcal{K}_i(\theta_1, \theta_2, U, V)|
\end{equation}where
\begin{equation}\label{ji}
\mathcal{K}_i(\theta_1, \theta_2, U, V)=\int h(\mu)\text{spec}(\mu)\int_{\mathbb{R}}\int_{\mathbb{R}} \mathcal{U}(x)\mathcal{U}(y)\mathcal{J}_{i}(\epsilon_1\theta_1x^2,\epsilon_2\theta_2y^2;\mu)e(Ux+Vy)\,\,dx\,dy\,d\mu,
\end{equation}where $\mathcal{U}(x)=2xW(x^2)$ and $\mathcal{J}_i$ as defined in \eqref{521}. It remains to estimate the contribution of each $\mathcal{K}_i, i=1,\cdots, 5$. As remarked earlier, we only estimate the contribution of $\mathcal{K}_4$, which is given by \eqref{mainest} of Theorem \ref{average}. The estimation of the rest is very similar and is just a matter of formality. In \eqref{mainest}, it is enough to estimate the contribution of the last term on the right-hand side since the estimation for the rest are very similar and provide us with a smaller contribution. Substituting the last term of \eqref{mainest} and the expression for $U,V$ from \eqref{7.9} into \eqref{7.10}, and trivially executing the $\mu$ integral we obtain
\begin{equation}
I(\tilde{m},\tilde{n})\ll \frac{TL^{k_2/2}}{N^{1/2}}\cdot\frac{d_1}{{\tilde{m}}^{1/4}{\tilde{n}}^{1/4}},
\end{equation}with the constraint 
\begin{equation}\label{mres}
\left|\frac{2\pi U}{\theta_1^{1/2}}-1\right|=\left|\frac{\sqrt{\tilde{m}}}{bd_2}-1\right|\ll T^{-1/140},
\end{equation}where $b:=(8\pi^2)^{-1}(rl_1^{k_0}l_2^{k_2-k_1}/f)$.
Hence taking absolute values, we see that \eqref{6.6} is dominated by
\begin{equation}\label{6.15}
\frac{TN^{3/2}L^{k_2/2}}{L^{2k_2}d_2}\sum_{\substack{c_2|d_2\\c_1|d_1}}c_1c_2\sideset{}{'}\sum_{\substack{\tilde{m}\ll M_0\\c_2|(\tilde{m}+l_1^{k_0}M_f(y))}}\,\,\sum_{\substack{\tilde{n}\ll M_0\\c_1|(\tilde{n}+l_2^{k_0}\tilde{M}_f(y))}}\frac{1}{{\tilde{m}}^{1/4}{\tilde{n}}^{1/4}},
\end{equation}where the prime over the $\tilde{m}$ sum denotes the restriction \eqref{mres}. Note that have replaced the absolute values of Fourier coefficients by their pointwise bound $T^{\epsilon}$. Substituting \eqref{6.15} in \eqref{6.3}, and further dividing  $\tilde{m}\asymp Z_1\ll M_0$ and $\tilde{n}\asymp Z_2\ll M_0$ into dyadic blocks, we arrive at
\begin{equation}\label{6.16}
\begin{aligned}
&\frac{TN^{3/2}L^{k_2/2}}{L^{2k_2}}\sum_{f}\frac{1}{f}\sum_{l_1,l_2\asymp L}\\
&\times\frac{1}{H_1H_2^{2}Z_1^{1/4}Z_2^{1/4}}}\sum_{\substack{d_1\asymp H_1\\d_2\asymp H_2}}\,\,\sum_{\substack{y(\bmod f)\\f|(rl_2^{k_2-k_1}d_2+rl_1^{k_2-k_1}d_1y)}}\,\,\,\sum_{\substack{c_1|d_1\\c_2|d_2}}c_1c_2\,\,\,\sideset{}{'}\sum_{\substack{\tilde{m}\asymp Z_1\\c_1|(\tilde{m}+l_1^{k_0}M_f(y))}}\,\,\,\sum_{\substack{\tilde{n}\asymp Z_2\\c_2|(\tilde{n}+l_2^{k_0}\tilde{M}_f(y))}}1{.
\end{aligned}
\end{equation}
\\
We now count the number of points given by the last five set of summations. The last two condition on $\tilde{m}$ and $\tilde{n}$ implies
\begin{equation}\label{setofcong}
\tilde{m}f=c_1k_1+rl_2^{k_2-k_1}d_2\,\,\,\,\text{and}\,\,\,\,\,\,\tilde{n}f=c_2k_2+rl_1^{k_2-k_1}d_1.
\end{equation}We first fix $\tilde{m}, c_2, d_2$ . Note that this determines $c_1$ (upto a divisor function). For this $c_1$, the number of $d_1$ is $\ll H_1/c_1$, and for each of these $d_1$, the no. of $k_2$ is $\ll 1+fZ_2/c_2$. Also, the number of $y (\bmod f)$ is $\ll (rl_2^{k_2-k_1}d_2,f)\leq (rl_2^{k_2-k_1},f)(d_2,f)$. Hence \eqref{6.16} is dominated by
\begin{equation}\label{6.18}
\begin{aligned}
\frac{TN^{3/2}L^{k_2/2}}{L^{2k_2}}&\sum_{l_1,l_2\asymp L}\sum_{f}\frac{(rl_2^{k_2-k_1},f)}{f}\\
&\times\frac{1}{H_2^{2}Z_1^{1/4}Z_2^{1/4}}\sum_{\substack{\\d_2\asymp H_2}}(d_2,f)\,\,\sum_{\substack{\\c_2|d_2}}c_2\,\,\,\left(1+\frac{fZ_2}{c_2}\right)\sideset{}{'}\sum_{\substack{\tilde{m}\asymp Z_1\\}} 1.
\end{aligned}
\end{equation}Executing the $\tilde{m}$ sum with the constraint \eqref{mres}, and the remaining sum trivially, we see that the second line in \eqref{6.18} is dominated by
\begin{equation}\label{small}
\begin{aligned}
&\frac{T^{-1/140}Z_1^{3/4}}{H_2^{2}Z_2^{1/4}}\left(H_2^2+fZ_2H_2\right)= \frac{T^{-1/140}Z_1^{1/4}}{Z_2^{1/4}}\left(Z_1^{1/2}+\frac{fZ_1^{1/2}Z_2}{H_2}\right)\\
&\ll\frac{T^{-1/140}Z_1^{1/4}}{Z_2^{1/4}}\cdot\frac{f(L^{k_0+k_2-k_1}r)^{3/2}D_0^{1/2}}{f^2}\ll (Z_1/Z_2)^{1/4}T^{-1/140}\frac{N^{1/2}(L^{k_0+k_2-k_1}r)^{2}}{fT}.
\end{aligned}
\end{equation}The last bound is fine for us if $(Z_1/Z_2)\ll 1$. When $Z_1/Z_2$ is large, we count the set of congruences \eqref{setofcong} by first fixing $\tilde{n}, c_1, d_1$. Following the rest of the arguments from earlier, we will arrive at the bound
\begin{equation}\label{large}
(Z_2/Z_1)^{1/4}\frac{N^{1/2}(L^{k_0+k_2-k_1}r)^{2}}{fT}
\end{equation}for the second line of \eqref{6.18}. Combining \eqref{small} and \eqref{large}, we see that the second line of \eqref{6.18} can be dominated by
\begin{equation}\notag
T^{-1/280}\frac{N^{1/2}(L^{k_0+k_2-k_1}r)^{2}}{fT}.
\end{equation}Substituting the last bound and executing the remaining sum trivially, we see that \eqref{6.18}, and consequently 
\begin{equation}\label{mainod}
S_6\ll \frac{N^2r^2L^{2(1+k_0-k_1)+k_2/2}}{T^{1/280}}\ll \frac{N^2r^2L^8}{T^{1/280}}.
\end{equation}

\section{Computations for $f(z)=E(z,1/2)$}
For the case $\lambda(m)=d(m)$, the $m$-sum (similarly the $n$-sum) in \eqref{m,nsum} becomes
\begin{equation}\label{eisenstienvor}
\begin{aligned}
&\sideset{}{^*}\sum_{x (d_1)}e\left(\frac{xM_f(y)}{d_1}\right)\sum_{m}d(m)e\left(\frac{\epsilon_2\bar{x}ml_1^{k_0}}{d_1}\right)V\left(\frac{ml_1^{k_2}}{N}\right)\\&\hspace{70mm}\Phi_{w_6}\left(-\frac{\epsilon_2 mrl_1^{k_0}l_2^{k_2-k_1}d_2}{d_1^2f},\,-\frac{\epsilon_1 nrl_1^{k_2-k_1}l_2^{k_0}d_1}{d_2^2f}\right)\\
&=\frac{N(d_1,l_1^{k_0})}{l_1^{k_2}d_1}\sideset{}{^*}\sum_{x (d_1)}e\left(\frac{xM_f(y)}{d_1}\right)\Bigg(2\int_{\mathbb{R}_{>0}}\left(\log ((d_1,l_1^{k_0})\sqrt{x}/d_1)+\gamma\right)h(x)dx\\
& +\sum_{\tilde{m}\geq 1}d(\tilde{m})e\left(\frac{-\epsilon_2x\overline{(l_1^{k_0}/(d_1,l_1^{k_0}))}\tilde{m}}{d_1/(d_1,l_1^{k_0})}\right)\int_{0}^{\infty}\left(-2\pi Y_0(U'\sqrt{x})+4K_0 (U'\sqrt{x})\right)h(x)dx\Bigg),
\end{aligned}
 \end{equation} where $h(x)=V(x)\Phi_{w_6}(-\epsilon_2\theta_1 x, -\epsilon_1nrl_1^{k_2-k_1}l_2^{k_0}d_1/d_2^2f)$ and $U'=(d_1,l_1^{k_0})U$, where $U,\theta_1$ as in \eqref{7.9}. 
 The contribution of $Y_0(U'\sqrt{x})$ part will be the same as in \eqref{6.6} since the asymptotics for the $Y_0$ Bessel function and the $J$ Bessel function are the same (for fixed orders). For the $K_0(U'\sqrt{x})$ part, note that from \eqref{5.2} and \eqref{modulus} , $U'\gg U\gg fN^{1/2}/(L_1^{k_2/2}D_0)\gg (T^2/rN^{1/2})L^{-2(k_0+k_2+k_1)}\gg T^{1/2}L^{-6}\gg T^{\epsilon}$, by our choice of $L$. Hence from the exponential decay \eqref{besselweight}, the contribution of the $K_0(U'\sqrt{x})$ part is negligibly small.

Thus for $f=E(z,1/2)$, we just have to take care of the additional contribution  $A_0+A_1+A_2$ towards \eqref{6.3}, where
\begin{equation}
\begin{aligned}
A_0=\sum_{l_1,l_2\asymp L}\sum_{d_1,d_2\geq 1}\frac{N^2(d_1,l_1^{k_0})(d_2, l_2^{k_0})}{l_1^{k_2}l_2^{k_2}d^2_1d^2_2}&\sum_{f\geq 1}\frac{1}{f}\sum_{\substack{y(\bmod f)\\f|\left(rl_2^{k_2-k_1}d_2+rl_1^{k_2-k_1}d_1y\right)}}\\
&\times\sum_{\substack{c_1|(d_1, M_f(y))\\ c_2|(d_2 ,\tilde{M}(y)))}}c_1\mu\left(\frac{d_1}{c_1}\right)c_2\mu\left(\frac{d_2}{c_2}\right)\,I_{0},
\end{aligned}
\end{equation}
\begin{equation}
\begin{aligned}
A_1\ll \sum_{l_1,l_2\asymp L}\sum_{f\geq 1}\frac{1}{f}\sum_{d_1,d_2\ll D_0/f}\frac{N^2}{l_1^{k_2}l_2^{k_2}d^2_1d^2_2}\sum_{\substack{y(\bmod f)\\f|\left(rl_2^{k_2-k_1}d_2+rl_1^{k_2-k_1}d_1y\right)}}\\
\sum_{\substack{c_1|(d_1,M_f(y))\\c_2|d_2}}c_1c_2\sum_{\substack{\tilde{n}\\c_2|(\tilde{n}+l_2^{k_0}\tilde{M}_f(y))}}d(\tilde{n})\,I_1(\tilde{m}),
\end{aligned}
\end{equation}
\begin{equation}
\begin{aligned}
 A_2\ll \sum_{l_1,l_2\asymp L}\sum_{f\geq 1}\frac{1}{f}\sum_{d_1,d_2\ll D_0/f}\frac{N^2}{l_1^{k_2}l_2^{k_2}d^2_1d^2_2}\sum_{\substack{y(\bmod f)\\f|\left(rl_2^{k_2-k_1}d_2+rl_1^{k_2-k_1}d_1y\right)}}\\
\sum_{\substack{c_1|d_1\\c_2|(d_2,\tilde{M}_f(y))}}c_1c_2\sum_{\substack{\tilde{m}\\c_1|(\tilde{m}+l_1^{k_0}M_f(y))}}d(\tilde{m})\,I_2(\tilde{m}),
\end{aligned}
\end{equation} where
\begin{equation}
I_0=\sum_{\epsilon_1,\epsilon_2=\pm 1}\int\int V(x)V(y)\Phi_{w_6}(-\epsilon_2\theta_1x,-\epsilon_1\theta_2y)dx\,dy=\mathcal{K}(\theta_1,\theta_2, 0, 0),
\end{equation}
\begin{equation}
I_1(\tilde{m})=\sum_{\epsilon_1,\epsilon_2=\pm 1}\int\int V(x)V(y)\Phi_{w_6}(-\epsilon_2\theta_1x,-\epsilon_1\theta_2y)Y_0\left(U\sqrt{x}\right)dx\,dy,
\end{equation}and
\begin{equation}
I_2(\tilde{n})=\sum_{\epsilon_1,\epsilon_2=\pm 1}\int\int V(x)V(y)\Phi_{w_6}(-\epsilon_2\theta_1x,-\epsilon_1\theta_2y)Y_0\left(V\sqrt{y}\right)dx\,dy,
\end{equation}where $\mathcal{K}$ as in \ref{inte} and  $U,V,\theta_1,\theta_2$ as in \eqref{7.9}.
By abuse of notation, here $V(x)$ and $V(y)$ denotes the updated weight functions which also includes the $\log$ factors coming from \eqref{eisenstienvor}, wherever occurring. Also, note that the expressions for $S_1$ and $S_2$ are under the assumptions $(d_1,l_1)=(d_2,l_2)=1$ as earlier.

As we will see in a moment, $S_0$ contains a main term which we need to evaluate explicitly and save in the $L$ variable. For this reason all the sums are kept intact for this part.
\subsection{Estimating $A_0$.}
Denote $a_1=rl_1^{k_2-k_1}, a_2=rl_2^{k_2-k_1}$. Recall $M_f(y)=(a_2d_2+a_1d_1y)/f$ so that 
\begin{equation}\notag
c_1|(d_1, M_f(y))\Rightarrow c_1|(d_1, a_2d_2+a_1d_1y)\Rightarrow c_1|(d_1,a_2d_2).
\end{equation}One similarly has $c_2|(a_1d_1,d_2)$. Hence $S_0$ can be written as
\begin{equation}\notag
\begin{aligned}
S_0=\sum_{l_1,l_2\asymp L}\sum_{d_1,d_2}\frac{N^2(d_1,l_1^{k_0})(d_2, l_2^{k_0})}{l_1^{k_2}l_2^{k_2}d^2_1d^2_2}\sum_{\substack{c_1|(d_1,a_2d_2)\\ c_2|(a_1d_1,d_2)}}c_1\mu\left(\frac{d_1}{c_1}\right)c_2\mu\left(\frac{d_2}{c_2}\right)\sum_{f}\frac{1}{f}\cdot I_0\sideset{}{^*}\sum_{\substack{y(\bmod f)\\f|\left(a_2d_2+a_1d_1y\right)\\c_1|M_f(y)\\c_2|\tilde{M}_f(y)}}1,
\end{aligned}
\end{equation}We now count the number of solutions $y\bmod f$ to the last set of congruence conditions. Write  $(a_1d_1, a_2d_2)=\lambda_0, \lambda_1=\lambda_0/c_1,\lambda_2=\lambda_0/c_2$ and further write $(\lambda_1,f)=a, (\lambda_2, f)=b$. Now
\begin{equation}\notag
\begin{aligned}
f|(a_2d_2+a_1d_1y)\,\,\,\,\text{and}\,\,\,\,\, c_1|M_f(y)\Leftrightarrow c_1f|(a_2d_2+a_1d_1y)\\
\Leftrightarrow (a_2d_2/\lambda_0)+(a_1d_2/\lambda_0)y=0 (\bmod f/a),
\end{aligned}
\end{equation}and similarly
\begin{equation}\notag
f|(a_2d_2+a_1d_1y)\,\,\,\,\text{and}\,\,\,\,\, c_2|\tilde{M}_f(y)\Leftrightarrow  (a_2d_2/\lambda_0)+(a_1d_2/\lambda_0)y=0 (\bmod f/b).
\end{equation}Since $[f/a, f/b]=f/(a,b)$, we thus obtain
\begin{equation}\notag
\sideset{}{^*}\sum_{\substack{y(\bmod f)\\f|\left(a_2d_2+a_1d_1y\right)\\c_1|M_f(y)\\c_2|\tilde{M}_f(y)}}1=\sideset{}{^*}\sum_{\substack{y(\bmod f)\\(a_2d_2/\lambda_0)+(a_1d_1/\lambda_0)y=0 (f/(a,b))}}1
\end{equation}Since $(a_2d_2/\lambda_0, a_1d_1/\lambda_0)=1$, the last congruence condition uniquely determines $y$ modulo $f/(a,b)$ which we call $y_{f/(a,b)}$. Hence we get $y=\lambda\cdot f/(a,b)+y_{f/(a,b)}, 1\leq \lambda\leq (a,b)$. Note that $(y_{f/(a,b)}, f/(a,b))=1$ by definition. Hence our counting problem boils down to counting $1\leq \lambda\leq (a,b)$ such that $(\lambda\cdot f/(a,b)+y_{f/(a,b)}, (a,b))=1$ which is easily seen to be
\begin{equation}\notag
\sideset{}{^*}\sum_{\substack{y(\bmod f)\\f|\left(a_2d_2+a_1d_1y\right)\\c_1|M_f(y)\\c_2|\tilde{M}_f(y)}}1=\sideset{}{^*}\sum_{\substack{y(\bmod f)\\(a_2d_2/\lambda_0)+(a_1d_1/\lambda_0)y=0 (f/(a,b))}}1=\left(\frac{f}{(a,b)}, (a,b)\right)\phi\left(\frac{(a,b)}{\left(\frac{f}{(a,b)}, (a,b)\right)}\right).
\end{equation}Substituting we have
\begin{equation}\label{s0reduced}
\begin{aligned}
S_0=\sum_{l_1,l_2\asymp L}\sum_{d_1,d_2}&\frac{N^2(d_1,l_1^{k_0})(d_2, l_2^{k_0})}{l_1^{k_2}l_2^{k_2}d^2_1d^2_2}\sum_{\substack{c_1|(d_1,a_2d_2)\\ c_2|(a_1d_1,d_2)}}c_1\mu\left(\frac{d_1}{c_1}\right)c_2\mu\left(\frac{d_2}{c_2}\right)\\
&\sum_{a|\lambda_1}\sum_{b|\lambda_2}\sum_{\substack{f\geq 1\\ (f,\lambda_1)=a\\(f,\lambda_2)=b}}\frac{1}{f}\cdot\left(\frac{f}{(a,b)}, (a,b)\right)\phi\left(\frac{(a,b)}{\left(\frac{f}{(a,b)}, (a,b)\right)}\right) \cdot I_0.
\end{aligned}
\end{equation}The second line of \eqref{s0reduced} equals
\begin{equation}\label{red2}
\sum_{a|\lambda_1}\sum_{b|\lambda_2}\sum_{c|(a,b)}c\,\phi\left(\frac{(a,b)}{c}\right)\sum_{\substack{f\geq 1\\ (f,\lambda_1)=a\\(f,\lambda_2)=b\\(f/(a,b), (a,b))=c}}\frac{1}{f}\cdot I_0.
\end{equation}For notational ease, we work the details only for $c=1$. The general case is easily worked out in the same way provided extra space and notations. Now $(f,\lambda_1)=a$ implies $f=af', (f',\lambda_1/a)=1$. Substituting this value of $f$ in $(f,\lambda_2)=b$ we obtain $(\lambda_2,a)|b, f'=(b/(\lambda_2,a))f''$ where $(f'', \lambda_2/b)=1$. Combining these two we get $f=abf''/(\lambda_2,a)$ where $(f'', \lambda_1\lambda_2/ab)=1$ and $(b/(\lambda_2,a), \lambda_1/a)=1$. Finally, the last condition $(f/(a,b), (a,b))=1$ translates to $(f'',(a,b))=1$ and $(ab/((\lambda_2,a)(a,b)), (a, b))=1$.  Hence the contribution of $c=1$ to \eqref{red2} becomes
\begin{equation}\label{red3}
\sum_{a|\lambda_1}\sum_{\substack{(\lambda_2,a)|b|\lambda_2\\(ab/((\lambda_2,a)(a,b)), (a, b))=1}}\frac{(\lambda_2,a)\phi((a, b))}{ab}\sum_{\substack{f''\geq 1\\ (f'', \lambda_1\lambda_2(a,b)/ab)=1}}\frac{1}{f''}\mathcal{K}(\theta_1,\theta_2, 0, 0)
\end{equation} where from \eqref{7.9}
\begin{equation}\notag
\theta_1=\frac{Nrl_1^{k_0-k_2}l_2^{k_2-k_1}d_2}{{d_1}^2f}=\left(\frac{Nrl_1^{k_0-k_2}l_2^{k_2-k_1}d_2(\lambda_2,a)}{{d_1}^2ab}\right)\frac{1}{f''},
\end{equation}and
\begin{equation}\notag
\theta_2=\frac{Nrl_1^{k_2-k_1}l_2^{k_0-k_2}d_1}{{d_2}^2f}=\left(\frac{Nrl_1^{k_2-k_1}l_2^{k_0-k_2}d_1(\lambda_2,a)}{{d_2}^2ab}\right)\frac{1}{f''}.
\end{equation}Executing the $f''$-sum using Theorem \ref{dia} we get
\begin{equation}\notag
\sum_{\substack{f''\geq 1\\ (f'', \lambda_1\lambda_2(a,b)/ab)=1}}\frac{1}{f''}\mathcal{K}(\theta_1,\theta_2, 0, 0)\ll T^{3} \left(\frac{d_1^{1/2}d_2^{1/2}ab}{N(rL^{-k_1})L^{k_0}(\lambda_2,a)}\right).
\end{equation}Substituting in \eqref{red3} and executing the remaining sum we see that \eqref{red3} is bounded by
\begin{equation}\notag
\frac{T^3d_1^{1/2}d_2^{1/2}}{N(rL^{-k_1})L^{k_0}}\sum_{a|\lambda_1}\sum_{b|\lambda_2}\phi((a,b))\ll \frac{T^3d_1^{1/2}d_2^{1/2}}{N(rL^{-k_1})L^{k_0}} (\lambda_1,\lambda_2)=\frac{T^3d_1^{1/2}d_2^{1/2}}{N(rL^{-k_1})L^{k_0}}\cdot \frac{\lambda_0 (c_1,c_2)}{c_1,c_2}.
\end{equation}Substituting the above bound with the value $\lambda_0=(rl_1^{k_2-k_1}d_1,rl_2^{k_2-k_1}d_2)$ for the second line of \eqref{s0reduced} we obtain
\begin{equation}\label{A_0}
\begin{aligned}
A_0&\ll \frac{NT^3}{(rL^{-k_1})L^{2k_2+k_0}}\sum_{l_1,l_2\asymp L}\sum_{d_1,d_2}\frac{(d_1,l_1^{k_0})(d_2, l_2^{k_0})(rl_1^{k_2-k_1}d_1, rl_2^{k_2-k_1}d_2)}{d_1^{3/2}d_2^{3/2}}\sum_{\substack{c_1|(d_1,a_2d_2)\\ c_2|(a_1d_1,d_2)}} (c_1,c_2)\\
&\ll  \frac{NT^3}{(rL^{-k_1})L^{2k_2+k_0}}\sum_{l_1,l_2\asymp L}\sum_{d_1,d_2}\frac{(d_1,l_1^{k_0})(d_2, l_2^{k_0})(rl_1^{k_2-k_1}d_1, rl_2^{k_2-k_1}d_2)}{d_1^{3/2}d_2^{3/2}}(d_1,d_2)\\
&\ll \frac{NT^3rL^{-k_1}}{(rL^{-k_1})L^{2k_2+k_0}}\sum_{l_1,l_2\asymp L}\sum_{d_1,d_2}\frac{(d_1,l_1^{k_0})(d_2, l_2^{k_0})(l_1^{k_2}d_1, l_2^{k_2}d_2) (d_1,d_2)}{d_1^{3/2}d_2^{3/2}}\\
&\ll \frac{NT^3}{L^{2k_2+k_0}}\sum_{l_1,l_2\asymp L}\left(1+L^{k_2+k_0}\delta_{l_1=l_2}\right)\ll NT^3L^2\left(\frac{1}{L^{2k_2+k_0}}+\frac{1}{L}\right).
\end{aligned}
\end{equation}
\subsection{Estimating $A_1$ and $A_2$.} By symmetry, it is enough to consider $S_1$. By extracting the oscillations from the Bessel functions using \eqref{bes}, and substituting the definition of $\Phi_{w_6}$, we obtain
\begin{equation}\label{tildem}
I_1(\tilde{m})\ll |U|^{-1/2}\mathcal{K}\left(\theta_1,\theta_2,\pm U,0\right),
\end{equation}where $\mathcal{K}\left(\theta_1,\theta_2,U,V\right)$ as defined in  \eqref{inte}. From the representations given in section \ref{intrep}, we have
\begin{equation}\notag
\mathcal{K}(\theta_1,\theta_2, U,0)\ll\sum_{i=1}^{5}|\mathcal{K}_i(\theta_1, \theta_2, U, 0)|,
\end{equation}where $\mathcal{K}_i(\theta_1,\theta_2, U,V)$ as defined in \eqref{ji}. As earlier, we consider the contribution of only $\mathcal{K}_4(\theta_1,\theta_2, U,0)$, towards which we have the estimate \eqref{v=0}. It is enough to provide the calculation for the last term in the right hand side of \eqref{v=0}. Substituting the bound into \eqref{tildem}, we obtain
\begin{equation}
I_1(\tilde{m})\ll \frac{TL^{k_2/4}}{N^{1/4}}\cdot\frac{d_1}{d_2^{1/2}\tilde{m}^{1/4}}
\end{equation}with the constraint 
\begin{equation}\label{mres2}
\left|\frac{2\pi U}{\theta_1^{1/2}}-1\right|=\left|\frac{\sqrt{\tilde{m}}}{bd_2}-1\right|\ll T^{-1/140},
\end{equation}where $b:=(8\pi^2)^{-1}(rl_1^{k_0}l_2^{k_2-k_1}/f)$. Dividing into dyadic blocks $d_1\asymp H_1\ll D_0/f , d_2\asymp H_2\ll D_0/f$ and $\tilde{m}\asymp Z\ll M_0$ we see that the contribution of this dyadic block towards $A_1$  at most
\begin{equation}\label{917}
\begin{aligned}
&\frac{N^{7/4}L^{k_2/4}}{L^{2k_2}}\sum_{f}\frac{1}{f}\sum_{l_1,l_2\asymp L}\lambda(l_1^{k_2})\overline{\lambda(l_2^{k_2})}\\
&\times\frac{1}{H_1H_2^{5/2}Z^{1/4}}}\sum_{\substack{d_1\asymp H_1\\d_2\asymp H_2}}\,\,\sum_{\substack{y(\bmod f)\\f|(rl_2^{k_2-k_1}d_2+rl_1^{k_2-k_1}d_1y)}}\,\,\,\sum_{\substack{c_1|d_1\\c_2|(d_2, \tilde{M}_f(y))}}c_1c_2\,\,\,\sideset{}{'}\sum_{\substack{\tilde{m}\asymp Z\\c_1|(\tilde{m}+l_1^{k_0}M_f(y))}}1{,
\end{aligned}
\end{equation}where the prime over the $\tilde{m}$ sum denotes the restriction \eqref{mres2}. Note that $c_2|(d_2,\tilde{M}_f(y))$ implies $c_2|(d_2,rl_1^{k_2-k_1}d_1)$ and $c_1|(\tilde{m}+l_1^{k_0}M_f(y))$ implies $c_1|(\tilde{m}f+rl_1^{k_0}l_2^{k_2-k_1}d_2)$. Executing the $c_2$ sum and rearranging, we get that the second line of \ref{917} is bounded by 
\begin{equation}\label{918}
\frac{1}{H_1H_2^{5/2}Z^{1/4}}\sideset{}{'}\sum_{\tilde{m}\asymp Z}\sum_{d_2\asymp H_2}(f,rl_2^{k_2-k_1}d_2)\sum_{c_1|(\tilde{m}f+rl_1^{k_0}l_2^{k_2-k_1}d_2)}c_1\sum_{\substack{d_1\asymp H_1\\c_1|d_1}}(d_2,rl_1^{k_2-k_1}d_1).
\end{equation}The last sum in \ref{918} is bounded by $(H_1/c_1)(d_2,rl_1^{k_2-k_1}c_1)$, which by the congruence condition is at most $(H_1/c_1)(d_2,rl_1^{k_2-k_1}\tilde{m}f)$. Summing over $\tilde{m}$ with the restriction \eqref{mres2}, we get that \ref{918} is at most
\begin{equation}
\begin{aligned}
\frac{T^{-1/140}Z^{3/4}}{H_2^{5/2}}\sum_{d_2\asymp H_2}(f,rl_2^{k_2-k_1}d_2)(d_2,rl_1^{k_2-k_1}f) &\ll \frac{T^{-1/140}Z^{3/4}}{H_2^{5/2}}\sum_{d_2\asymp H_2}(f,d_2)^2\\
&\ll\frac{T^{-1/140}Z^{3/4}}{H_2^{1/2}}.
\end{aligned}
\end{equation}Substituting $Z$ from \eqref{7.8}, we see that \eqref{917} is bounded by 
\begin{equation}\label{920}
\begin{aligned}
&(rL^{k_0+k_2-k_1})^{3/4}\frac{T^{-1/140}N^{7/4}L^{k_2/4}L^2}{L^{2k_2}}\sum_{f\leq D_0}\frac{1}{f^{7/4}}\sup_{H_1,H_2\ll D_0/f} H_2^{1/4}\\
&\ll (rL^{k_0+k_2-k_1})^{3/4}\frac{T^{-1/140}N^{7/4}L^{k_2/4}L^2}{L^{2k_2}}D_0^{1/4}\\
&\ll (rL^{k_0+k_2-k_1})^{5/4}\frac{N^2L^{k_2/4}L^2}{T^{1/2+1/140}L^{2k_2}},
\end{aligned}
\end{equation}which is clearly smaller than \eqref{mainod}.

\section{Remaining contributions}
First let us consider $S_4$ which involves the transform $\Phi_{w_4}(n_1n_2m_2/(D_1D_2))$ with the constraints $D_2|D_1,\,\,n_2D_1=m_1D_2^2$, where $(m_1,m_2)=(rl_1^{k_2-k_1}, ml_1^{k_0})$ and $(n_1,n_2)=(rl_2^{k_2-k_1},nl_2^{k_0})$ in our case. From \eqref{phi5negsmall}, this transform negligibly small unless
\begin{equation}
\frac{n_1n_2m_2}{D_1D_2}\gg T^3
\end{equation}Writing $D_1=D_2q$, we get $D_2=n_2q/m_1$ and hence we must have
\begin{equation}\label{7.2}
\frac{n_1m_1^2m_2}{n_2q^2}\gg T^3.
\end{equation}Substituting the values $(m_1,m_2)=(rl_1^{k_2-k_1}, ml_1^{k_0})$ and $(n_1,n_2)=(rl_2^{k_2-k_1},nl_2^{k_0})$, the above gives
\begin{equation}\label{7.3}
\frac{(rL^{k_2-k_1})^3m}{nq^2}\gg T^3.
\end{equation}Since $m\asymp n\asymp N$ and $r,L$ are small powers of $T$, it is clear that \eqref{7.3} can never happen. Hence the contribution of $\sum_4$ is negligible. One can similarly show that the contribution of $S_5$ is also negligible.

Substituting $\mathcal{E}_{min}, \mathcal{E}_{\max}$ from \eqref{emin} into \eqref{s_i}, we have
\[S_{min}=\frac{1}{24(2\pi i)^2}\int\int_{\Re(\mu)=0}\frac{h(\mu)}{N_{\mu}}\left|\sum_{l\asymp L}\sum_{\substack{n\asymp N\\l^{k_2}|n}}\lambda_{\mu}(rl^{k_2-k_1},nl^{k_0-k_2})\lambda (n)V\left(\frac{n}{N}\right)\right|^2d\mu_1 d\mu_2\geq 0,\]
\begin{equation}\notag
\begin{aligned}
S_{max}=\frac{1}{2\pi i}\sum_{g}\int_{\Re(\mu)=0}&\frac{h(\mu+\mu_g,\mu-\mu_g,-2\mu)}{N_{\mu,g}}\\
&\times \left|\sum_{l\asymp L}\sum_{\substack{n\asymp N\\l^{k_2}|n}}\lambda_{\mu, g}(rl^{k_2-k_1},nl^{k_0-k_2})\lambda (n)V\left(\frac{n}{N}\right)\right|^2d\mu\geq 0.
\end{aligned}
\end{equation}
Hence we ignore $S_{min}$ and  $S_{max}$ due to their negative contribution towards \eqref{s_i}.

\begin{proof}[\textbf{Proof of Theorem \ref{kutz}}]
Follows after combining \eqref{CS} with the estimates  \eqref{920}, \eqref{A_0} and \eqref{mainod}.
\end{proof}
It remains to prove Theorem \ref{dia} and Theorem \ref{average} which we do in the last two sections below.
\section{Proof of Theorem \ref{dia}}\label{pfdia}
Shifting the contours in \ref{58} to $\Re{s_1}=\Re{s_2}=-1/2$, we have (upto a negligible error term)
\begin{equation}
\begin{aligned}
&\frac{1}{f}\mathcal{K}\left(\frac{\theta_1}{f},\frac{\theta_2}{f},0,0\right)=\frac{1}{6}\int_{\Re(\mu)=0}h(\mu)\text{spec}(\mu)\int_{\mathbb{R}}\int_{\mathbb{R}}\mathcal{U}(x)\mathcal{V}(y)\sum_{\epsilon_1,\epsilon_2=\pm 1}\sum_{w\in \mathcal{W}}\\
&\int_{(-1/2)}\int_{(-1/2)}f^{s_1+s_2-1}|4\pi^2\theta_1x^2|^{-s_1}|4\pi^2\theta_2y^2|^{-s_2}G(s,\mu)S^{\epsilon_1,\epsilon_2}(s,w(\mu))\frac{ds}{(2\pi i)^2}\,dx\,dy\,d\mu
\end{aligned}
\end{equation}Summing over $f$ with $(f,F )=1$, the inner $s=(s_1, s_2)$ integral becomes
\begin{equation}\label{112}
\begin{aligned}
\int_{(-1/2)}\int_{(-1/2)}\zeta (1-s_1-s_2)W_{F}(1-s_1-s_2)|4\pi^2\theta_1x^2|^{-s_1}|4\pi^2\theta_2y^2|^{-s_2}G(s,\mu)&S^{\epsilon_1,\epsilon_2}(s,w(\mu))\\&\quad\quad\quad\frac{ds}{(2\pi i)^2}\,,
\end{aligned}
\end{equation}where $W_F(s)=\prod_{p|F}(1-p^{-s})$. We now shift the contours in \eqref{112} to $\Re s_1=\Re s_2 = 1/2-\epsilon$, right past the possible pole at $s_1+s_2=0$. We claim that the residues corresponding to $\epsilon_1,\epsilon_2\in \{\pm 1\}^2,  w\in\mathcal{W}$ all add up to zero (upto a negligible error). Indeed, we first note that
\begin{equation}
G((s,-s),\mu)S^{++}((s,-s),\mu)=G((s,-s),\mu)S^{--}((s,-s),\mu)=0
\end{equation}by the definition in \eqref{55} and \eqref{56}. So, we are left with
\begin{equation}\label{116}
\sum_{w\in\mathcal{W}}\left(S^{+-}((s,-s),w(\mu))+S^{-+}((s,-s),w(\mu))\right)G((s,-s),\mu).
\end{equation}We have
\begin{equation}
\begin{aligned}
&S^{+-}((s,-s),w(\mu))G((s,-s),\mu)\\
&\quad\quad\quad\quad=\frac{\cos\left(\frac{\pi}{2}(\mu_2-\mu_3)\right)}{32\sin\left(\frac{\pi}{2}(\mu_1-\mu_2)\right)\sin\left(\frac{\pi}{2}(\mu_1-\mu_3)\right)(\mu_1-s)(s-\mu_2)(s-\mu_3)}.
\end{aligned}
\end{equation}This is negligibly small unless
\begin{equation}
\Im (\mu_2)<\Im (\mu_1)<\Im (\mu_3)\,\,\,\,\text{or}\,\,\,\,\Im (\mu_3)<\Im (\mu_1)<\Im (\mu_2),
\end{equation}in which case it equals, upto a negligible error term,
\begin{equation}
\frac{1}{16(\mu_1-s)(s-\mu_2)(s-\mu_3)}.
\end{equation}One can similarly show that
\begin{equation}
S^{-+}((s,-s),w(\mu))G((s,-s),\mu)= -\frac{1}{16(\mu_1-s)(s-\mu_2)(s-\mu_3)}+O_A(T^{-A}),
\end{equation}if 
\begin{equation}
\Im (\mu_1)<\Im (\mu_3)<\Im (\mu_2)\,\,\,\,\text{or}\,\,\,\,\Im (\mu_2)<\Im (\mu_3)<\Im (\mu_1),
 \end{equation} and is negligibly small otherwise. Adding the four relevant Weyl chambers with non-negligible contribution, it is now clear that \eqref{116} is essentially zero. Hence the theorem follows after shifting the contours to $\Re s_1=\Re s_2 = 1/2-\epsilon$, and then estimating trivially using the Stirling's formula.
 \\
 \\

\section{Proof of Theorem \ref{average}}\label{pfaverage}
Let us start by recalling 
\begin{equation}\label{1120}
\mathcal{J}_4(y,\mu) =\left|\frac{y_1}{y_2}\right|^{\mu_2/2}\int_0^{1} \tilde{K}_{3\nu_3}\left(|y_1|^{1/2}\sqrt{1-u^2}\right)\tilde{K}_{3\nu_3}\left(|y_2|^{1/2}\sqrt{u^{-2}-1}\right)u^{3\mu_2}\,\frac{du}{u},
\end{equation}and the object we want to bound in this section is
\begin{equation}
\begin{aligned}
&\mathcal{K}_4(\theta_1,\theta_2,U,V)\\
&=\int_{\Re\mu=0}h(\mu)\hbox{spec}(\mu){\int\int}\mathcal{U}(x)\mathcal{V}(y)\mathcal{J}_{4}(\theta_1x^2,\theta_2y^2;\mu)e(Ux+Vy)\,dx\,dy\,d\mu.
\end{aligned}
\end{equation}After substituting \eqref{1120}, we can write the above as
\begin{equation}\label{1121}
\mathcal{K}_4(\theta_1,\theta_2,U,V)=\int_{\Re\mu=0}h(\mu)\left(\frac{\theta_1}{\theta_2}\right)^{\mu_2/2}\hbox{spec}(\mu)\int_{0}^{1}I(U, \alpha(u)) \tilde{I}(V,\beta (u))u^{3\mu_2}\frac{du}{u},
\end{equation}where
\begin{equation}\notag
I(U, \alpha(u)) =\int_{\mathbb{R}}\mathcal{U}(x)x^{\mu_2}\tilde{K}_{3\nu_3}\left(\alpha(u)x\right)e(Ux)\,dx,\,\,\alpha(u)=\theta_1^{1/2}\sqrt{1-u^2},
\end{equation}
\begin{equation}\notag
\tilde{I}(V,\beta (u))=\int_{\mathbb{R}}\mathcal{V}(y)y^{-\mu_2}\tilde{K}_{3\nu_3}\left(\beta(u)y\right)e(Vy)\,dx,\,\,\beta(u)=\theta_2^{1/2}\sqrt{u^{-2}-1}.
\end{equation}Now, the $\mu_2$ integral in \eqref{1121} is given by
\begin{equation}
\int_{\Re\mu_2=0}h(\mu)\hbox{spec}(\mu)\left|\frac{u^3\theta_1^{1/2}x}{\theta_2^{1/2}y}\right|^{\mu_2}\,d\mu_2,
\end{equation}and by integration by parts and \ref{519}, this integral is negligible unless
\begin{equation}\label{usize}
u=\left|\frac{\theta_2^{1/2}y}{\theta_1^{1/2}x}\right|^{1/3}(1+O(T^{-\epsilon})).
\end{equation}Since $u\leq 1$, the first claim \eqref{t1>t2} follows.

The rest of the section is devoted to the estimation of the $u$-integral in \eqref{1121}. For convenience, we denote $T_1=\Im(3\nu_3)=\Im(\mu_3-\mu_1)$ and $T_2=\Im \mu_2$. Note that by our hypothesis
\begin{equation}
T_1\asymp T_2\asymp T.
\end{equation} Denote
\begin{equation}\label{eq11.39}
c_1:=\left(\frac{U\pi}{\theta_1^{1/2}}\right)\left(\frac{\epsilon_2T_1}{\epsilon_1(\epsilon_2T_1-T_2)}\right),\,\,\,c_2:=\left(\frac{V\pi}{\theta_2^{1/2}}\right)\left(\frac{\epsilon'_2T_1}{\epsilon'_1(\epsilon'_2T_1-T_2)}\right)
\end{equation}and
\begin{equation}\notag
c_0:=\frac{\epsilon_2T_1+T_2}{\epsilon_2T_2-T_2},\,\,\,\,\tilde{c}_0:=\frac{\epsilon'_2T_1+T_2}{\epsilon'_2T_1-T_2}.
\end{equation}
 We introduce several dyadic partition of unity and insert localising factors
\begin{equation}\label{g}
G(u):=F_1(T^{\delta_1}(1-u^2))F_2(T^{\delta_2}u)F_3\left(T^{\delta_3}\left(\frac{c^2_1}{1-u^2}-4c_0\right)\right)F_4\left(T^{\delta_4}\left(\frac{c^2_2}{u^{-2}-1}-4\tilde{c}_0\right)\right)
\end{equation}
 to the $u$-integral in \eqref{1121}, so that
 \begin{equation}\label{K4}
\mathcal{K}_4(\theta_1,\theta_2,U,V)\ll T^{3}\sup_{\substack{\mu=\mu_0+O(T^{\epsilon})\\\delta_1,\delta_2\geq 0\\\delta_3,\delta_4\in\mathbb{R}}}\left|\int_{0}^{1}\frac{G(u)}{u}I(U, \alpha(u)) \tilde{I}(V,\beta (u))u^{3\mu_2}\,du\right|.
 \end{equation}To estimate the last $u$-integral, we substitute of asymptotics \eqref{eq1121} for $I(U,\alpha(u))$ and $\tilde{I}(V,\beta(u))$ from Lemma \ref{11.1} to get
\begin{equation}\label{eq1138}
\begin{aligned}
&\int_{0}^{1}\frac{G(u)}{u}I(U, \alpha(u)) \tilde{I}(V,\beta (u))u^{3\mu_2}\,du\\
& \sim\sum_{R_1, R_2}\frac{1}{T}\int_{0}^{1}\frac{G(u)}{u} I_{R_1}^{\epsilon_1,\epsilon_2}(U,\alpha(u)) \tilde{I}_{R_2}^{\epsilon'_1,\epsilon'_2}(V,\beta(u)) u^{3iT_2}\,\frac{du}{u},
\end{aligned}
\end{equation}where $I_{R_1}^{\epsilon_1,\epsilon_2}$ and $\tilde{I}_{R_2}^{\epsilon'_1,\epsilon'_2}$ as in \eqref{Inepsilon}.
Our object of study is now
\begin{equation}\label{os}
\frac{1}{T}\int_{0}^{1}\frac{G(u)}{u}I_{R_1}^{\epsilon_1,\epsilon_2}(U,\alpha(u)) I_{R_2}^{\epsilon'_1,\epsilon'_2}(V,\beta(u)) u^{3iT_2}\,du.
\end{equation}Note that from \eqref{usize}, we can assume
\begin{equation}\label{delta2theta}
u^{-1}\asymp T^{\delta_2}\asymp (\theta_1/\theta_2)^{1/6}.
\end{equation}
\begin{lemma}\label{Gder}
We have
\begin{equation}\label{1140}
G^{j}(u)\ll ((1+T^{\delta_3}+T^{\delta_4})T^{\delta_1+\delta_2})^j,\,\,\,j\geq 0.
\end{equation}
 \end{lemma}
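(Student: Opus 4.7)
The plan is to apply Leibniz's rule to the four-factor product $G = F_1 F_2 F_3 F_4$ in \eqref{g} and then invoke Faà di Bruno's formula (Lemma \ref{bruno}) for each composition $F_i(g_i(u))$, where the $g_i$ denote the inner arguments. Since each $F_i$ is a fixed smooth bump function, it suffices to control the derivatives $g_i^{(l)}(u)$ on the joint support of $G$. For $g_1(u) = T^{\delta_1}(1-u^2)$ and $g_2(u) = T^{\delta_2}u$, only the first two and first derivatives survive, and they are manifestly bounded by $T^{\delta_1}$ and $T^{\delta_2}$, so Faà di Bruno immediately gives contributions of $T^{j_1\delta_1}$ and $T^{j_2\delta_2}$. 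For the third factor, write $h(u) = 1/(1-u^2)$; the partial fraction expansion yields $h^{(l)}(u) \ll (1-u^2)^{-(l+1)}$, which on the support of $F_1$ is $\ll T^{(l+1)\delta_1}$. Combining this with the estimate $c_1^2 \ll T^{-\delta_1}$, forced by $c_1^2/(1-u^2) \asymp 1$ on the support of $F_3$, gives $g_3^{(l)} \ll T^{\delta_3 + l\delta_1}$, and Faà di Bruno produces the contribution $T^{j_3(\delta_3+\delta_1)}$ to $G^{(j)}$.

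The main obstacle will be the fourth factor, where a naive mimic of the $F_3$ analysis loses an extra $T^{j_4\delta_2}$. Indeed, using the supports of $F_1, F_2, F_4$ one only gets $c_2^2 \ll T^{2\delta_2 - \delta_1}$, which combined with $\tilde h^{(l)} = h^{(l)} \ll T^{(l+1)\delta_1}$ (where $\tilde h(u) = u^2/(1-u^2) = h(u)-1$) yields merely $g_4^{(l)} \ll T^{\delta_4 + 2\delta_2 + l\delta_1}$. To recover the sharper factor $T^{j_4(\delta_4 + \delta_1 + \delta_2)}$, I will exploit the dichotomy imposed by the joint support of $F_1 F_2$: the conditions $1 - u^2 \asymp T^{-\delta_1}$ and $u \asymp T^{-\delta_2}$ cannot both hold when $\delta_1, \delta_2$ are simultaneously positive, since small $u$ forces $1-u^2 \asymp 1$ (hence $\delta_1 = 0$) while $u$ close to $1$ forces $\delta_2 = 0$. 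In the case $\delta_2 = 0$ the naive bound already suffices. In the case $\delta_1 = 0$ and $u \asymp T^{-\delta_2}$, the Taylor expansion $h(u) = \sum_{n \geq 0} u^{2n}$ yields the parity-dependent estimate $h^{(l)}(u) \asymp 1$ for $l$ even and $\asymp u \asymp T^{-\delta_2}$ for $l$ odd, hence $g_4^{(l)} \asymp T^{\delta_4 + 2\delta_2}$ for $l$ even and $\asymp T^{\delta_4 + \delta_2}$ for $l$ odd. The Faà di Bruno combinatorics then reduces to maximizing $\delta_4(a+b) + \delta_2(a+2b)$ subject to $\sum_l l m_l = j_4$, where $a = \sum_{l\,\mathrm{odd}} m_l$ and $b = \sum_{l\,\mathrm{even}} m_l$; using that odd $l \geq 1$ and even $l \geq 2$ one obtains $a + 2b \leq j_4$ and $a + b \leq j_4$, producing the desired contribution $T^{j_4(\delta_4 + \delta_2)}$.

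Combining the four bounds via the Leibniz multinomial expansion, one arrives at
\begin{equation*}
G^{(j)} \ll \sum_{j_1+j_2+j_3+j_4=j} T^{j_1\delta_1 + j_2\delta_2 + j_3(\delta_3+\delta_1) + j_4(\delta_4+\delta_1+\delta_2)} \ll \bigl((1 + T^{\delta_3} + T^{\delta_4})T^{\delta_1+\delta_2}\bigr)^j,
\end{equation*}
which is the claimed bound.
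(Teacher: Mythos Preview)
Your proof is correct in spirit, with one minor inaccuracy: the claim ``$c_1^2 \ll T^{-\delta_1}$, forced by $c_1^2/(1-u^2)\asymp 1$ on the support of $F_3$'' only holds when $\delta_3\geq 0$. In general the support of $F_3$ gives $c_1^2/(1-u^2) - 4c_0 \asymp T^{-\delta_3}$, hence $c_1^2 \ll T^{-\delta_1}(1+T^{-\delta_3})$, so that $T^{\delta_3}c_1^2 \ll (1+T^{\delta_3})T^{-\delta_1}$ and $g_3^{(l)}\ll (1+T^{\delta_3})T^{l\delta_1}$. The same correction applies to $g_4$. With this fix your argument goes through and the final bound matches \eqref{1140}.

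Your route for the $F_4$ factor differs genuinely from the paper's. You rewrite $(u^{-2}-1)^{-1}=h(u)-1$ with $h(u)=(1-u^2)^{-1}$, bound $h^{(l)}\ll (1-u^2)^{-(l+1)}$ via partial fractions, and then observe that the resulting loss of $T^{j_4\delta_2}$ is illusory because the joint support of $F_1F_2$ forces the dichotomy $\delta_1=0$ or $\delta_2=0$; in the former case you recover the saving through a parity analysis of the Taylor expansion of $h$ at $0$. This is a valid and somewhat elegant observation, but it is more elaborate than necessary. The paper instead applies Fa\`a di Bruno directly to the composition $(u^{-2}-1)^{-1}=f(g(u))$ with $g(u)=u^{-2}-1$ and $f(x)=x^{-1}$: since $g^{(k)}(u)\ll |u|^{-2-k}$ and $|g|^{-1-m}\ll T^{(\delta_1-2\delta_2)(1+m)}$ on the support, one obtains $g_4^{(j)}\ll (1+T^{\delta_4})T^{(\delta_1+\delta_2)j}$ in a single estimate, valid for all $\delta_1,\delta_2\geq 0$ simultaneously, with no case split or parity argument. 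The paper's choice of inner function tracks the $u$-dependence more precisely and is the cleaner route here; your dichotomy, on the other hand, isolates a structural feature of the localization (at most one of $u$, $1-u$ can be small) that might be useful elsewhere.
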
	
 \begin{proof}
We first note that since $1-u^2\asymp T^{-\delta_1}$, from Fa\'a di Bruno's formula \ref{bruno}, we get
\begin{equation}
\begin{aligned}
&\frac{\partial^j}{\partial u^j}(T^{\delta_3}(c_1^2(1-u^2)^{-1}-4c_0))\ll_jT^{\delta_3}c_1^2\sum_{\substack{j_1,j_2\\j_1+2j_2=j}}|1-u^2|^{-1-j_1-j_2}|u|^{j_1}\\
&\ll T^{\delta_3}c_1^2 \sum_{j_2\leq j/2}(T^{-\delta_1})^{-1-j+j_2}(T^{-\delta_2})^{j-2j_2}\ll \sum_{j_2\leq j/2} T^{\delta_3}c_1^2T^{\delta_1}T^{j(\delta_1-\delta_2)}(1+T^{-\delta_1+2\delta_2})^{j/2}\\
&\ll T^{\delta_3}c_1^2T^{\delta_1}(T^{\delta_1-\delta_2}+T^{\delta_1/2})^j\ll (1+T^{\delta_3})T^{j\delta_1}.
\end{aligned}
\end{equation}Hence
\begin{equation}\label{F3}
\frac{\partial^j}{\partial u^j}F_3(T^{\delta_3}(c_1^2(1-u^2)^{-1}-4c_0))\ll_j T^{j\delta_1}(1+T^{\delta_3})^j.
\end{equation}Similarly, using the Fa\'a di Bruno's formula we get
\begin{equation}\notag
\begin{aligned}
&\frac{\partial^j}{\partial u^j}(T^{\delta_4}(c_2^2(u^{-2}-1)^{-1}-4\tilde{c}_0))\ll_jT^{\delta_4}c_2^2\sum_{\substack{j_1,j_2,\cdots\\\sum kj_k=j}}|u^{-2}-1|^{-1-\sum j_k}|u|^{-2\sum j_k-j}\\
&\ll T^{\delta_4}c_2^2\sum_{\sum kj_k=j}(T^{-\delta_1+2\delta_2})^{-1-\sum j_k}(T^{-\delta_2})^{-2\sum j_k-j}\\
&\ll T^{\delta_4}c_2^2 T^{-\delta_1+2\delta_2} T^{j\delta_2}\sum_{\sum kj_k=j}T^{\delta_1\sum j_k}\ll T^{\delta_4}c_2^2 T^{-\delta_1+2\delta_2}  T^{(\delta_1+\delta_2)j} \\
&\ll (1+T^{\delta_4})T^{(\delta_1+\delta_2)j} ,
\end{aligned}
\end{equation}and consequently
\begin{equation}\label{F4}
\frac{\partial^j}{\partial u^j}F_4(T^{\delta_4}(c_2^2(u^{-2}-1)^{-1}-4\tilde{c}_0))\ll_jT^{(\delta_1+\delta_2)j}(1+T^{\delta_4})^j.
\end{equation}Using \eqref{F3}, \eqref{F4} and the simple inequalities $F_1^{(j)}(T^{\delta_1}(1-u^2))\ll_j T^{j\delta_1}$ and $F_2^{(j)}(T^{\delta_2}u)\ll_j T^{j\delta_2}$ we arrive at 
\begin{equation}\notag
G^{j}(u)\ll ((1+T^{\delta_3}+T^{\delta_4})T^{\delta_1+\delta_2})^j,\,\,\,j\geq 0.
\end{equation}
 \end{proof}
\subsubsection*{\bf{\textit{Case 1 : The generic case}}} $R_1^{-1} \ll  T^{\beta}$, $R_2^{-1} \ll T^{\beta}$,
\begin{equation}\label{eq11.40}
T^{-\delta_3/2}\gg T^{-1/3+2\epsilon}\left(\frac{T}{\alpha(u)}+1\right)
\end{equation}and
\begin{equation}\label{eq11.41}
T^{-\delta_4/2}\gg T^{-1/3+2\epsilon}\left(\frac{T}{\beta(u)}+1\right).
\end{equation}Note that  we have
\begin{equation}\label{delta2aux}
T^{\delta_2}\asymp \theta_1^{1/6}/\theta_2^{1/6}=(\theta_1^{1/2}/T)(T/\theta_1^{1/3}\theta_2^{1/6})\ll T^{\epsilon}T^{\delta_1/2}(\alpha(u)/T),
\end{equation}where we have used the first condition of \eqref{uvup}. Since $T/\alpha(u)\asymp R_1+1>1$, we conclude 
\begin{equation}\label{delta2small}
T^{\delta_2}\ll T^{\epsilon+\delta_1/2}\implies T^{\delta_2}\ll T^{\epsilon}.
\end{equation}

In this case we claim that the condition \eqref{eq11.27} holds for the corresponding roots of $I_{R_1}^{\epsilon_1,\epsilon_2}(U,\alpha(u))$ and $\tilde{I}_{R_2}^{\epsilon'_1.\epsilon'_2}(V,\beta(u))$. It is enough to prove the claim for $I_{R_1}^{\epsilon_1,\epsilon_2}(U,\alpha(u))$ since the argument for the other is identical. Recall that $z_1(\epsilon_1\alpha(u))$ and $ z_2(\epsilon_1\alpha(u))$ are the two roots of the equation 
 \begin{equation}\label{11.44}
\frac{\partial}{\partial v}\left(-T_2\log(\phi_R(\epsilon_1\alpha(u),v))+\epsilon_2T_1\log v\right)=0,
\end{equation}where
\begin{equation*}
\phi_R(\alpha,v)=\frac{-2\pi}{T_2}\left(\frac{\alpha}{2\pi}\left(Rv-\frac{1}{Rv}\right)+U\right).
\end{equation*}Expanding we get
\begin{equation*}
\begin{aligned}
&\frac{\partial}{\partial v}\left(-T_2\log(\phi_R(\epsilon_1\alpha(u),v))+\epsilon_2T_1\log v\right)\\
&=-\frac{T_2\epsilon_1\alpha(u)\left(v^2+R^{-2}\right)}{v(\epsilon_1\alpha(u)\left(v^2-R^{-2}\right)+R^{-1}2\pi U v)}+\frac{\epsilon_2T_1}{v}\\
&= \frac{\epsilon_1\alpha(u)(\epsilon_2T_1-T_2)v^2+R^{-1}(2\pi U)(\epsilon_2T_1)v-\epsilon_1\alpha(u)R^{-2}(\epsilon_2T_1+T_2)}{v(\epsilon_1\alpha(u)\left(v^2-R^{-2}\right)+R^{-1}2\pi U v)}
\end{aligned}
\end{equation*}Hence $z_1(\epsilon_1\alpha(u))$ and $ z_2(\epsilon_1\alpha(u))$ are the roots of the quadratic equation
\begin{equation*}
v^2+R^{-1}\left(\frac{2\pi U}{\alpha(u)}\right)\left(\frac{\epsilon_2T_1}{\epsilon_1(\epsilon_2T_1-T_2)}\right)v-R^{-2}\left(\frac{\epsilon_2T_1+T_2}{\epsilon_2T_1-T_2}\right).
\end{equation*}Solving we get
\begin{equation}\label{11.45}
z_1(\epsilon_1\alpha(u))=R^{-1}\frac{c_1}{\sqrt{u^2-1}}+R^{-1}\sqrt{\frac{c^2_1}{1-u^2}-c_0}
\end{equation}and
\begin{equation*}
z_2(\epsilon_1\alpha(u))=R^{-1}\frac{c_1}{\sqrt{u^2-1}}-R^{-1}\sqrt{\frac{c^2_1}{1-u^2}-c_0},
\end{equation*}where $c_1$ and $c_0$ as in \eqref{eq11.39}. Hence by the assumption \eqref{eq11.40}
\begin{equation*}
z_1(\epsilon_1\alpha(u))-z_2(\epsilon_1\alpha(u))\asymp R^{-1}\sqrt{\frac{c^2_1}{1-u^2}-c_0}\asymp R^{-1}T^{-\delta_3/2}\gg T^{2\epsilon}R^{-1}(T/\alpha(u)+1)T^{-1/3}.
\end{equation*}Since we are in the range $R+1\asymp T/\alpha(u)$, we get
\begin{equation}\label{rootdiff}
\begin{aligned}
z_1(\epsilon_1\alpha(u))-z_2(\epsilon_1\alpha(u))\gg T^{2\epsilon}R^{-1}(T/\alpha(u)+1)T^{-1/3}&\gg T^{2\epsilon}(R^3T/(R+1)^3)^{-1/3}\\
&\asymp  T^{2\epsilon}(R^3\alpha/(R+1)^2)^{-1/3}.
\end{aligned}
\end{equation}
Hence we can substitute asymptotic expansion \eqref{eq1123} in place of $I_{R_1}^{\epsilon_1,\epsilon_2}(U,\alpha(u))$ in \eqref{os}. By similar arguments, the asymptotic expansion holds for  $\tilde{I}_{R_2}^{\epsilon'_1,\epsilon'_2}(V,\beta(u))$ as well. Let $x_l(\epsilon'_1\beta(u))$ be the corresponding roots of the phase function of $\tilde{I}_{R_2}$. Substituting these approximations we get
\begin{equation}\label{uint}
\begin{aligned}
 &\frac{1}{T}\int_{0}^{1}\frac{G(u)}{u}I_{R_1}^{\epsilon_1,\epsilon_2}(U,\alpha(u)) \tilde{I}_{R_2}^{\epsilon'_1,\epsilon'_2}(V,\beta(u)) u^{3iT_2}\,du\\
 &\approx\frac{1}{T}\cdot\frac{T^{\delta_1/4+\delta_3/4}}{\theta_1^{1/4}}.\frac{T^{\delta_1/4+\delta_4/4-\delta_2/2}}{\theta_2^{1/4}}\sum_{k,l}\int_{0}^{1}H_{k,l}(u)e\left(T_1\cdot\frac{\phi_{k,l}(u)}{2\pi}\right)\,du,
 \end{aligned}
 \end{equation} where 
 \begin{equation}\label{phikl}
 \begin{aligned}
 \phi_{k,l}(u):=-(T_2/T_1)\log\left(\frac{\phi_{R_1}(\epsilon_1\alpha(u),z_k(\epsilon_1\alpha(u)))}{\psi_{R_2}(\epsilon'_1\beta(u),x_l(\epsilon'_1\beta(u)))}\right)+\epsilon_2 \log(z_k(\epsilon_1\alpha(u))) &+\epsilon_2'\log(x_l(\epsilon_1'\beta(u)))\\
 &+3(T_2/T_1)\log u,
 \end{aligned}
 \end{equation}where
 \begin{equation*}
\phi_{R_1}(\alpha,v)=\frac{-2\pi}{T_2}\left(\frac{\alpha}{2\pi}\left(R_1v-\frac{1}{R_1v}\right)+U\right),
 \end{equation*}
\begin{equation*}
\psi_{R_2}(\alpha,v)=\frac{-2\pi}{T_2}\left(\frac{\alpha}{2\pi}\left(R_2v-\frac{1}{R_2v}\right)+V\right),
 \end{equation*} 
 and
 \begin{equation}\label{h}
 H_{k,l}(u):=\frac{G(u)}{u}H_k(u)\tilde{H}_{l}(u),
 \end{equation}where $H_k(u)$ is the weight function coming from $I_{R_1}^{\epsilon_1,\epsilon_2}$ and $H_l(u)$ is the weight function coming from $\tilde{I}_{R_2}^{\epsilon'_1,\epsilon_2'}$, that is,
 \begin{equation}\label{hk}
 H_k(u)=(T^{\delta_1}(1-u^2))^{-1/4}\frac{\omega_{R_1}(\epsilon_1\alpha(u), z_k(\epsilon_1\alpha(u)))}{(RT^{\delta_3/2}|z_1(\epsilon_1\alpha(u))-z_2(\epsilon_1\alpha(u))|)^{1/2}}
 \end{equation}and
 \begin{equation}\label{hl}
 \tilde{H}_l(u)=(T^{\delta_1-2\delta_2}(u^{-2}-1))^{-1/4}\frac{\omega_{R_2}(\epsilon_1'\beta(u), x_l(\epsilon'_1\beta(u)))}{(RT^{\delta_4/2}|x_1(\epsilon'_1\beta(u))-x_2(\epsilon'_1\beta(u))|)^{1/2}},
 \end{equation}where $\omega_{R_1}, \omega_{R_2}$ as in \eqref{11.24}. From the derivative bounds \eqref{eq1126}, \eqref{1140} we deduce
 \begin{lemma}\label{w1}
 We have
 \begin{equation}
  \frac{d^j H_{k,l}(u)}{du^j}\ll_{j}T^{\delta_2}\cdot ((B_1+B_2)T^{\delta_1+\delta_2})^j,\,\,j\geq 1,
 \end{equation}where
 \begin{equation*}
 B_1:=(1+R_1^{-1}T^{-\delta_3/2})(1+T^{\delta_3})
  \end{equation*}
 \begin{equation*}
B_2:= (1+R_2^{-1}T^{-\delta_4/2})(1+T^{\delta_4}).
 \end{equation*}
\end{lemma}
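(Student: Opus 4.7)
The plan is to apply the generalized Leibniz rule to the product $H_{k,l}(u) = (G(u)/u)\,H_k(u)\,\tilde{H}_l(u)$ and bound the derivatives of each of the three factors separately. The factor $G(u)/u$ is immediate: on the support of $G$ we have $u^{-1}\asymp T^{\delta_2}$ by \eqref{delta2theta}, so combining with Lemma \ref{Gder} gives
\[
\frac{d^j}{du^j}\frac{G(u)}{u}\ll T^{\delta_2}\bigl((1+T^{\delta_3}+T^{\delta_4})T^{\delta_1+\delta_2}\bigr)^j,
\]
which is already absorbed by the target bound. It then suffices to prove $H_k^{(j)}(u)\ll (B_1 T^{\delta_1+\delta_2})^j$ and $\tilde{H}_l^{(j)}(u)\ll (B_2 T^{\delta_1+\delta_2})^j$, after which another Leibniz application yields the claim.

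For $H_k(u)$ I would split it as $(T^{\delta_1}(1-u^2))^{-1/4}\cdot(R_1 T^{\delta_3/2}|z_1-z_2|)^{-1/2}\cdot\omega_{R_1}(\epsilon_1\alpha(u),z_k(\epsilon_1\alpha(u)))$. The first factor is handled exactly as in the proof of Lemma \ref{Gder}: since $1-u^2\asymp T^{-\delta_1}$ and $u\asymp T^{-\delta_2}$, Fa\`a di Bruno gives derivatives $\ll T^{j\delta_1}$, which is fine. For the second factor, recall from \eqref{11.45} that
\[
|z_1(\epsilon_1\alpha(u))-z_2(\epsilon_1\alpha(u))|\asymp R_1^{-1}\sqrt{c_1^2/(1-u^2)-c_0},
\]
so that $R_1 T^{\delta_3/2}|z_1-z_2|\asymp T^{\delta_3/2}\sqrt{q(u)}$ with $q(u):=c_1^2/(1-u^2)-c_0\asymp T^{-\delta_3}$. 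The derivative bound \eqref{F3} from the proof of Lemma \ref{Gder} — namely $(T^{\delta_3}q(u))^{(j)}\ll(1+T^{\delta_3})T^{j\delta_1}$ — together with one more application of Fa\`a di Bruno to $q(u)^{-1/4}$ yields
\[
\frac{d^j}{du^j}(R_1 T^{\delta_3/2}|z_1-z_2|)^{-1/2}\ll \bigl((1+T^{\delta_3})T^{\delta_1}\bigr)^j,
\]
where the factor $(1+R_1^{-1}T^{-\delta_3/2})$ appearing in $B_1$ enters through the chain-rule bookkeeping when one eventually differentiates $\omega_{R_1}$ via the composed stationary point $z_k(\epsilon_1\alpha(u))$, since $z_k'$ inherits the $R_1^{-1}T^{-\delta_3/2}$ scale from differentiating the square root in \eqref{11.45}.

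The third factor $\omega_{R_1}(\epsilon_1\alpha(u),z_k(\epsilon_1\alpha(u)))$ is the most delicate. Here one must apply Fa\`a di Bruno to a function of two variables composed with $u\mapsto(\alpha(u),z_k(\alpha(u)))$. The partial derivative bounds for $\omega_{R_1}$ are supplied by \eqref{eq1126}; the derivatives of $\alpha(u)=\theta_1^{1/2}\sqrt{1-u^2}$ are controlled using $1-u^2\asymp T^{-\delta_1}$ as before; and the derivatives of $z_k$ follow from differentiating the explicit quadratic-formula expression \eqref{11.45}, which telescopes into derivatives of $q(u)$ and $q(u)^{1/2}$ that we have already estimated. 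Each differentiation of $z_k$ costs at most $R_1^{-1}T^{-\delta_3/2}\cdot T^{\delta_1}$, and each differentiation of $\omega_{R_1}$ itself costs the parameters from \eqref{eq1126}; collecting these gives the factor $B_1$ per derivative. The analogous argument for $\tilde{H}_l$ produces $B_2$, with the only change being that $\beta(u)=\theta_2^{1/2}\sqrt{u^{-2}-1}$ and so the analogue of \eqref{F4} furnishes the additional $T^{\delta_2}$ through $u^{-2}-1\asymp T^{-\delta_1+2\delta_2}$.

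The main obstacle is the combinatorial bookkeeping in the chain of Fa\`a di Bruno applications for the $\omega_{R_1}$ factor: one has to verify that when all the nested compositions (of $\omega$ with $z_k$, of $z_k$ with $\alpha$, and of $\alpha$ with $u$) are carried out, no single differentiation produces a factor larger than $B_1 T^{\delta_1+\delta_2}$. The key cancellation is that differentiating the square root in \eqref{11.45} once costs $T^{\delta_1}$ but divides by $\sqrt{q}\asymp T^{-\delta_3/2}$, producing the $R_1^{-1}T^{-\delta_3/2}$ factor that appears in $B_1$, and this is precisely what is needed to match the size of $\omega_{R_1}$'s partial derivatives from \eqref{eq1126}. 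Once this is checked term by term in the multinomial expansion, the stated estimate follows directly.
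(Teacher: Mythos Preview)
Your overall strategy is the same as the paper's---Leibniz on the product $H_{k,l}=(G/u)H_k\tilde H_l$, Lemma~\ref{Gder} for the first factor, and Fa\`a di Bruno for $H_k$ and $\tilde H_l$---but you over-decompose and misidentify what \eqref{eq1126} provides. The bound \eqref{eq1126} is \emph{not} a partial-derivative bound for $\omega_{R_1}(\alpha,v)$ in two variables; it is already a bound on the total $\alpha$-derivatives of the \emph{combined} factor $\omega_{R_1}(\epsilon_1\alpha,z_k(\epsilon_1\alpha))\big/(R_1T^{\delta_3/2}|z_1-z_2|)^{1/2}$. The paper therefore never splits $H_k$ into three pieces and never runs a two-variable chain rule through the stationary point $z_k$. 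Instead it (i) bounds $\alpha^{(j)}(u)$ directly by Fa\`a di Bruno on $\theta_1^{1/2}(1-u^2)^{1/2}$, (ii) feeds that into a single-variable Fa\`a di Bruno for the composition $\alpha\mapsto\omega_{R_1}(\epsilon_1\alpha,z_k)/(R_1T^{\delta_3/2}|z_1-z_2|)^{1/2}$ using \eqref{eq1126} as the outer bound, and (iii) handles the remaining explicit factor $(T^{\delta_1}(1-u^2))^{-1/4}$ separately. The same is done for $\tilde H_l$ with $\beta(u)$ in place of $\alpha(u)$.

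This buys you exactly the simplification you flag as the ``main obstacle'': there is no nested bookkeeping through $z_k'$ and no need to track the $R_1^{-1}T^{-\delta_3/2}$ scale by hand, because that factor is already baked into the constant $A$ of \eqref{eq1126}. Your separate treatment of $(R_1T^{\delta_3/2}|z_1-z_2|)^{-1/2}$ and $\omega_{R_1}$ would work in principle, but it duplicates work already done in the appendix and requires supplying partial derivatives of $\omega_{R_1}$ in both slots, which the paper never states.
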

\begin{proof}
We first note that since $1-u^2\asymp T^{-\delta_1}, u\asymp T^{-\delta_2}$, from Fa\'a di Bruno's formula \ref{bruno}, we get
\begin{equation}\label{alphader}
\begin{aligned}
\frac{\partial^j}{\partial u^j}\alpha (u)=\frac{\partial^j}{\partial u^j}(\theta_1^{1/2}(1-u^2)^{1/2})&\ll_j \theta_1^{1/2}\sum_{\substack{j_1,j_2\\j_1+2j_2=j}}|1-u^2|^{1/2-j_1-j_2}|u|^{j_1}\\
&\ll \theta_1^{1/2} \sum_{j_2\leq j/2}(T^{-\delta_1})^{1/2-j+j_2}(T^{-\delta_2})^{j-2j_2}\\
&\ll \theta_1^{1/2}T^{-\delta_1/2}T^{j(\delta_1-\delta_2)}(1+T^{-\delta_1+2\delta_2})^{j/2}\\
&\ll\theta_1^{1/2}T^{-\delta_1/2}(T^{\delta_1-\delta_2}+T^{\delta_1/2})^j.
\end{aligned}
\end{equation}Using \eqref{alphader}, \eqref{eq1126} and invoking Fa\'a di Bruno's formula \ref{bruno} once again  we obtain
\begin{equation}\label{uweight1}
\begin{aligned}
&\frac{\partial^j}{\partial u^j}\frac{\omega_{R_1}(\epsilon_1\alpha(u), z_k(\epsilon_1\alpha(u)))}{(RT^{\delta_3/2}|z_1(\epsilon_1\alpha(u))-z_2(\epsilon_1\alpha(u))|)^{1/2}}\\
&\ll_j \sum_{\substack{j_1,j_2,\cdots\\\sum kj_k=j}}(|\alpha(u)|^{-1}B_1)^{\sum j_k}\prod_{k}(\theta_1^{1/2}T^{-\delta_1/2}(T^{\delta_1-\delta_2}+T^{\delta_1/2})^k)^{ j_k}\\
&\ll (T^{\delta_1-\delta_2}+T^{\delta_1/2})^j \sum_{\substack{j_1,j_2,\cdots\\\sum kj_k=j}}B_1^{\sum j_k}\ll (B_1(T^{\delta_1-\delta_2}+T^{\delta_1/2}))^j\ll (B_1T^{\delta_1+\delta_2})^j.
\end{aligned}
\end{equation}Using the Fa\'a di Bruno's formula \ref{bruno}, one similarly obtains
\begin{equation}\label{uweight2}
\begin{aligned}
\frac{\partial^j}{\partial u^j}(T^{\delta_1}(1-u^2))^{-1/4}\ll_j \sum_{\substack{j_1,j_2\\j_1+2j_2=j}}(T^{\delta_1}u)^{j_1}(T^{\delta_1})^{j_2}&\ll \sum_{j_2\leq j/2}(T^{\delta_1-\delta_2})^{j-2j_2}(T^{\delta_1})^{j_2}\\
&\ll (T^{\delta_1-\delta_2}+T^{\delta_1/2})^j\\
&\ll T^{j(\delta_1+\delta_2)}.
\end{aligned}
\end{equation}Combining \eqref{uweight1} and \eqref{uweight2}, we obtain
\begin{equation}\label{H_k}
\frac{\partial^j}{\partial u^j}H_k(u)\ll_j (B_1T^{\delta_1+\delta_2})^j.
\end{equation}For $\beta(u)=\theta_2^{1/2}(u^{-2}-1)^{1/2}=\theta_2^{1/2}u^{-1}(1-u^2)^{1/2}$, using \eqref{alphader} and the fact that $(\partial^j/\partial u^j)u^{-1}\ll_j T^{\delta_2}T^{j\delta_2}$, we obtain that for $j\geq 1$,
\begin{equation}\notag
\begin{aligned}
\frac{\partial ^j}{\partial u^j} \beta(u)&\ll_j \theta_2^{1/2}T^{-\delta_1/2+\delta_2}(T^{\delta_1-\delta_2}+T^{\delta_1/2}+T^{\delta_2})^j\ll \theta_2^{1/2}T^{-\delta_1/2+\delta_2} T^{j(\delta_1+\delta_2)}.
\end{aligned}
\end{equation}Doing a similar calculation as in \eqref{uweight1} and \eqref{uweight2}, we arrive at
\begin{equation}\notag
\frac{\partial ^j}{\partial u^j}\frac{\omega_{R_2}(\epsilon_1'\beta(u), x_l(\epsilon'_1\beta(u)))}{(R_2T^{\delta_4/2}|x_1(\epsilon'_1\beta(u))-x_2(\epsilon'_1\beta(u))|)^{1/2}}\ll_j (B_2T^{\delta_1+\delta_2})^j,
\end{equation}and
\begin{equation}\notag
\frac{\partial^j}{\partial u^j}(T^{\delta_1-2\delta_2}(u^{-2}-1))^{-1/4}\ll_j T^{j(\delta_1+\delta_2)},
\end{equation}from which it follows
\begin{equation}\label{H_l}
\frac{\partial^j}{\partial u^j}H_l(u)\ll_j (B_2T^{\delta_1+\delta_2})^j.
\end{equation}Using Lemma \ref{Gder}, it is easy to see that
\begin{equation}\label{G}
\frac{\partial^j}{\partial u^j}\frac{G(u)}{u}\ll_j T^{\delta_2}((1+T^{\delta_3}+T^{\delta_4})T^{\delta_1+\delta_2})^j,\,\,\j\geq 1.
\end{equation}From \eqref{H_k}, \eqref{H_l} and \eqref{G}, it follows
\begin{equation}\notag
\frac{\partial^j}{\partial u^j} H_{k,l}(u)=\frac{\partial^j}{\partial u^j}\frac{G(u)}{u}H_k(u)H_{l}(u)\ll_j T^{\delta_2}((B_1+B_2)T^{\delta_1+\delta_2})^j.
\end{equation}The lemma follows.
\end{proof}	

\begin{lemma}\label{w2}
We have
\begin{equation}
 \frac{d^j \phi_{k,l}(u)}{du^j}\ll_{j} ((B_1+B_2)T^{\delta_1+\delta_2})^j,\,\,j\geq 1,
\end{equation}where $B_1, B_2$ as in Lemma \ref{w1}.
\end{lemma}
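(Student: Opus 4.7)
The plan is to mirror the proof of Lemma \ref{w1} almost verbatim: unfold $\phi_{k,l}(u)$ as a composition through $\alpha(u)$ and $\beta(u)$ via Fa\`a di Bruno's formula, and then bound separately the derivatives with respect to $\alpha$ (resp.\ $\beta$) and the derivatives $\alpha^{(j)}(u)$ (resp.\ $\beta^{(j)}(u)$) already established in the proof of Lemma \ref{w1}.

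The first step is to observe the envelope cancellation. Because $z_k(\alpha)$ is a critical point of the phase
$\mathcal{F}(\alpha,v):=-(T_2/T_1)\log \phi_{R_1}(\epsilon_1\alpha,v)+\epsilon_2\log v$,
the total derivative $\frac{d}{d\alpha}\mathcal{F}(\alpha,z_k(\alpha))$ equals the partial $\partial_\alpha \mathcal{F}(\alpha,z_k(\alpha))$ (the $z_k'(\alpha)$-term drops out). An identical statement holds for the $\beta$-piece with $x_l$. Thus derivatives of $\phi_{k,l}$ with respect to $\alpha$ and $\beta$ are effectively derivatives of $\log \phi_{R_1}(\epsilon_1\alpha,\cdot)$ and $\log \psi_{R_2}(\epsilon'_1\beta,\cdot)$ in their first slot, evaluated at the frozen critical values. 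The same computation producing the bound \eqref{eq1126} for $\omega_{R_1}$ then yields $\partial_\alpha^{\,k}\bigl(\mathcal{F}(\alpha,z_k(\alpha))\bigr)\ll (B_1/\alpha)^k$ and analogously $(B_2/\beta)^k$ for the $\beta$-piece, where the factors $B_1,B_2$ reflect the root separation bound \eqref{rootdiff}, exactly as in Lemma \ref{w1}.

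The second step is to convert $\alpha$ and $\beta$ derivatives into $u$ derivatives via Fa\`a di Bruno. From \eqref{alphader} and its analogue for $\beta$ derived in the proof of Lemma \ref{w1}, one has
\begin{equation*}
\alpha^{(k)}(u)\ll \alpha(u)\,T^{k(\delta_1+\delta_2)},\qquad \beta^{(k)}(u)\ll \beta(u)\,T^{k(\delta_1+\delta_2)},
\end{equation*}
so every factor of $(\alpha^{(k)}(u)/\alpha(u))^{j_k}$ telescopes to $T^{k j_k(\delta_1+\delta_2)}$ and the constraint $\sum k j_k=j$ collapses the Fa\`a di Bruno sum to $(B_1 T^{\delta_1+\delta_2})^j$ from the $\alpha$-piece, and likewise $(B_2 T^{\delta_1+\delta_2})^j$ from the $\beta$-piece. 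The remaining summand $3(T_2/T_1)\log u$ contributes $\ll T^{j\delta_2}$ via $(d/du)^j\log u \ll u^{-j}$, which is absorbed into the stated bound. Adding the three contributions yields $\phi_{k,l}^{(j)}(u)\ll ((B_1+B_2)T^{\delta_1+\delta_2})^j$, as required.

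The only real obstacle is verifying the envelope cancellation cleanly and then keeping track of the root-separation factor in the second derivative bound for $\mathcal{F}$ in the $v$-direction (which is what controls $\partial_\alpha^k$ of the envelope via implicit differentiation of the stationary condition). This is exactly the same issue that forced the factor $B_1$ to appear in Lemma \ref{w1}, and it is controlled in the current generic case by \eqref{rootdiff} combined with \eqref{eq11.40}--\eqref{eq11.41}. Once this is in hand, the Fa\`a di Bruno bookkeeping is routine and mimics the proof of Lemma \ref{w1} step for step.
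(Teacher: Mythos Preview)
Your proposal is correct and takes essentially the same approach as the paper: bound the $\alpha$- and $\beta$-derivatives of each summand in \eqref{phikl} using the bounds \eqref{rootder} and \eqref{phiRder} (this is exactly the computation underlying \eqref{eq1126} that you cite), then push through to $u$-derivatives via Fa\`a di Bruno with the $\alpha^{(k)}(u),\beta^{(k)}(u)$ bounds from the proof of Lemma~\ref{w1}. One small remark: the ``envelope cancellation'' you invoke is correct for $j=1$ but does \emph{not} persist to higher $j$ (for $j\ge 2$ the cross terms $\partial_v\partial_\alpha^{\,\ell}\mathcal{F}\cdot z_k'(\alpha)$ reappear), so the sentence claiming higher $\alpha$-derivatives reduce to partial derivatives in the first slot at frozen critical values is misleading; fortunately you then fall back on the full computation via \eqref{rootder}/\eqref{phiRder}, which is what actually carries the argument and is exactly what the paper does.
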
 
\begin{proof}
Same as the proof of the previous lemma after using the derivative bounds \eqref{rootder} and \eqref{phiRder} for each individual term in \eqref{phikl}. Note the term \eqref{G} is not present in this case and hence the absence of the factor $T^{\delta_2}$ in the statement of the lemma.
\end{proof}
\begin{lemma}\label{sign}
Let $\delta_1,\delta_2,\delta_3$ and $\delta_4$ as in \eqref{g}. Then $T^{-\delta_3}, T^{-\delta_4}\ll T^{\delta_1}$.
\end{lemma}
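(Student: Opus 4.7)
The plan is to read off the conclusion from the sizes of the quantities being localised by $F_3$ and $F_4$ in \eqref{g}, using the global hypotheses \eqref{uvup}, the generic-position assumption on $\mu_0$, and the reduction $\theta_1 \gg \theta_2$ in \eqref{t1>t2}. On the support of $G(u)$ we have
\begin{equation*}
1-u^{2}\asymp T^{-\delta_{1}},\qquad \frac{c_{1}^{2}}{1-u^{2}}-4c_{0}\asymp T^{-\delta_{3}},\qquad \frac{c_{2}^{2}}{u^{-2}-1}-4\tilde c_{0}\asymp T^{-\delta_{4}},
\end{equation*}
and $u\in(0,1]$ forces $\delta_{1}\geq 0$. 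From \eqref{eq11.39} and the generic-position assumption, $|c_{0}|\asymp 1$ and $|\tilde c_{0}|\asymp 1$, so it suffices to bound $c_{1}^{2}/(1-u^{2})$ and $c_{2}^{2}/(u^{-2}-1)$ by $O(T^{\delta_{1}})$, as then the triangle inequality combined with $\delta_{1}\geq 0$ gives $T^{-\delta_{3}},T^{-\delta_{4}}\ll T^{\delta_{1}}$.

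For the first quantity, by \eqref{eq11.39} and the generic-position bound $|\epsilon_{2}T_{1}/(\epsilon_{1}(\epsilon_{2}T_{1}-T_{2}))|\asymp 1$, we get $|c_{1}|\asymp |U|/\theta_{1}^{1/2}$. The hypothesis \eqref{uvup} gives $U\ll \theta_{1}^{1/2}+\theta_{1}^{1/3}\theta_{2}^{1/6}$, and since $\theta_{1}\gg\theta_{2}$ by \eqref{t1>t2}, the second term is absorbed into the first, yielding $|c_{1}|\ll 1$. Hence
\begin{equation*}
\frac{c_{1}^{2}}{1-u^{2}}\ll \frac{1}{1-u^{2}}\asymp T^{\delta_{1}},
\end{equation*}
so $T^{-\delta_{3}}\ll T^{\delta_{1}}$ as required.

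For the second, the same derivation gives $|c_{2}|\asymp |V|/\theta_{2}^{1/2}$, and \eqref{uvup} now gives $V\ll \theta_{2}^{1/2}+\theta_{2}^{1/3}\theta_{1}^{1/6}$, so $|c_{2}|\ll 1+(\theta_{1}/\theta_{2})^{1/6}\ll T^{\delta_{2}}$ by the identification $T^{\delta_{2}}\asymp (\theta_{1}/\theta_{2})^{1/6}$ from \eqref{delta2theta}. Moreover
\begin{equation*}
u^{-2}-1=\frac{1-u^{2}}{u^{2}}\asymp \frac{T^{-\delta_{1}}}{T^{-2\delta_{2}}}=T^{2\delta_{2}-\delta_{1}}.
\end{equation*}
Combining the two,
\begin{equation*}
\frac{c_{2}^{2}}{u^{-2}-1}\ll \frac{T^{2\delta_{2}}}{T^{2\delta_{2}-\delta_{1}}}=T^{\delta_{1}},
\end{equation*}
so $T^{-\delta_{4}}\ll T^{\delta_{1}}$. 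The only delicate point is the bound $|c_{2}|\ll T^{\delta_{2}}$, where one must be careful not to lose the factor $(\theta_{1}/\theta_{2})^{1/6}$ that appears in the upper bound for $V$; this is precisely compensated by the smallness of $u^{-2}-1$ coming from $u\asymp T^{-\delta_{2}}$.
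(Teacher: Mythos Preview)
Your argument is correct and essentially the same as the paper's: both use the definitions of $\delta_3,\delta_4$ from the localisers in $G(u)$, bound $c_1^2/(1-u^2)$ and $c_2^2/(u^{-2}-1)$ via the hypotheses \eqref{uvup}, \eqref{t1>t2} and the identification $T^{\delta_2}\asymp(\theta_1/\theta_2)^{1/6}$, and conclude using $|c_0|,|\tilde c_0|\asymp 1$ and $\delta_1\ge 0$. The only cosmetic difference is that the paper keeps the factor $(1+T^{-2\delta_2})$ explicit before observing $\delta_2\ge 0$, whereas you absorb it earlier using $\theta_1\gg\theta_2$.
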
	
\begin{proof}
By definition, we have
\begin{equation}\label{defdelta34}
T^{-\delta_3}\asymp \frac{c_1^2}{1-u^2}-4c_0,\,\,\,T^{-\delta_4}\asymp \frac{c_2^2}{u^{-2}-1}-4\tilde{c}_0
\end{equation}Now
\begin{equation}\notag
\frac{c_1^2}{1-u^2}\asymp T^{\delta_1}c_1^2=T^{\delta_1} U^2/\theta_1\ll T^{\delta_1}(1+(\theta_2/\theta_1)^{1/3})\asymp T^{\delta_1}(1+T^{-2\delta_2}),
\end{equation}and
\begin{equation}\notag
\frac{c_2^2}{u^{-2}-1}\asymp T^{\delta_1-2\delta_2}V^2/\theta_2\asymp T^{\delta_1} V^2/(\theta_1^{1/3}\theta_2^{2/3})\ll T^{\delta_1}(1+(\theta_2/\theta_1)^{1/3})\asymp T^{\delta_1}(1+T^{-2\delta_2}),
\end{equation}where we have used \eqref{uvup} and \eqref{delta2theta} in the last two inequalities. The claim follows after substituting the above bounds into \eqref{defdelta34} and observing that $\delta_2\geq 0$.
\end{proof}
\begin{lemma}\label{•}
Define the quantities
\begin{equation}\label{quan}
A:= \left|4c_1^2/(c_0-1)^2-1\right|=\left|(\pi U/\theta_1^{1/2})^2-1\right|,
\end{equation}
\begin{equation*}
B_3:=(B_1+B_2)T^{\delta_1-\delta_2}((1+T^{-\delta_1/2-\delta_4/2}).
\end{equation*}
We can write
\begin{equation}\label{reduc}
\int_{0}^{1}H_{k,l}(u)e\left(\frac{T_1\phi_{k,l}(u)}{2\pi}\right)\,du=\int_{\mathbb{R}}F(t)e(T_1\psi(t))\,dt,
\end{equation}where $F(t)$ is a compactly supported smooth function and the phase function $\psi(t)$ is such that
\begin{equation*}
\psi'(t)=K_0\frac{Q(t)}{h(t)},\,\,\,\,\,K_0:=AT^{\delta_1-12\delta_2},
\end{equation*}where $Q(t)$ is a degree 13 monic polynomial in $t$ and $h(t)$ is a smooth bounded function for $t\in\text{supp}(F)$. Furthermore, for $t\in \text{supp}(F)$ we have the following derivative bounds.
\begin{equation*}
F^{(j)}(t)\ll_j T^{-\delta_2}(1+T^{-\delta_4/2-\delta_1/2})B_3^{j},\,\,j\geq 1,
\end{equation*}
\begin{equation*}
F(t)\ll T^{-\delta_2-\delta_1/2-\delta_4/2},
\end{equation*}and
\begin{equation*}
\psi^{(j)}(t)\ll_j B_3^{j},\,\,j\geq 1.
\end{equation*}
\end{lemma}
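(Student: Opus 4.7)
The plan is to reduce the $u$-integral of \eqref{reduc} to standard form via a change of variables and then extract the structure of $\psi'$ using the envelope theorem and the algebraic relations satisfied by $z_k, x_l$. First I would compute $\phi'_{k,l}(u)$. Because $z_k(\epsilon_1\alpha(u))$ and $x_l(\epsilon'_1\beta(u))$ are stationary points of the inner phases used to define $\phi_{k,l}$, the envelope theorem kills all terms involving $z_k'(u)$ and $x_l'(u)$, yielding
\[
\phi'_{k,l}(u)=-\tfrac{T_2}{T_1}(\partial_\alpha\log\phi_{R_1})(\alpha(u),z_k)\,\alpha'(u)+\tfrac{T_2}{T_1}(\partial_\beta\log\psi_{R_2})(\beta(u),x_l)\,\beta'(u)+\tfrac{3T_2}{T_1 u}.
\]
This reduces the computation to the explicit algebraic dependence of the integrand on $u$, with $z_k,x_l$ treated as algebraic functions through their defining quadratics $R_i^2 v^2 + R_i c_1' v - c_0'=0$ and its analogue.

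Next I would perform the linear rescaling $t := T^{\delta_2}u$, so that $t\asymp 1$ on the support of $G$ and $du=T^{-\delta_2}dt$. The various square roots $\sqrt{c_1^2/(1-u^2)-c_0}\asymp T^{-\delta_3/2}$ and $\sqrt{c_2^2/(u^{-2}-1)-\tilde c_0}\asymp T^{-\delta_4/2}$ appearing in $z_k, x_l$ would be absorbed into the smooth function $h(t)$ (together with the nonvanishing factors $1-u^2$, $u^{-2}-1$, and the denominators of $\partial_\alpha\log\phi_{R_1}$, $\partial_\beta\log\psi_{R_2}$; Lemma \ref{sign} ensures $h$ stays bounded away from zero on the support of $F$). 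Clearing these denominators and applying the quadratic relations $R^2 z_k^2=c_0'-Rc_1'z_k$ (and similarly for $x_l$) eliminates all square roots from the numerator, leaving a polynomial in $t$. Careful bookkeeping of the degrees contributed by $\alpha'(u)$, $\beta'(u)$, the rational coefficients, and the powers of $u$ and $1-u^2$ cleared in the process shows that this polynomial has degree $13$ in $t$, with leading term proportional to the deviation $A=|(\pi U/\theta_1^{1/2})^2-1|$; this is exactly the cancellation at $c_1=1$ responsible for the factor $A$ in $K_0$. The remaining scaling $T^{\delta_1-12\delta_2}$ in $K_0$ is bookkept from the $T^{\delta_1}$ produced by $1-u^2\asymp T^{-\delta_1}$ (entering through $\alpha'(u)=-\theta_1^{1/2}u/\sqrt{1-u^2}$) and the $T^{-\delta_2}$ per monomial in $t$ upon rescaling.

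Having established $\psi'(t)=K_0 Q(t)/h(t)$, I would read off the pointwise and derivative bounds. The support of $F$ has length $O(1)$ after rescaling. The pointwise bound on $F$ follows from the prefactors $(T^{\delta_1}(1-u^2))^{-1/4}$ in \eqref{hk} and $(T^{\delta_1-2\delta_2}(u^{-2}-1))^{-1/4}$ in \eqref{hl}, and the extra factor $T^{-\delta_2}$ from the Jacobian $du/dt$. The higher derivative bounds on $F$ and $\psi$ follow from Lemmas \ref{w1} and \ref{w2} applied to $u(t)=T^{-\delta_2}t$ via Faà di Bruno (Lemma \ref{bruno}); the factor $T^{-\delta_2}$ in $du/dt$ exactly compensates the $T^{\delta_2}$ appearing in the $u$-side bound on $H_{k,l}$, and the extra factor $1+T^{-\delta_1/2-\delta_4/2}$ in $F$ reflects the size of $\tilde H_l$ when $\beta(u)$ is small.

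The main obstacle is the algebraic verification that, after the indicated manipulations, the numerator is a \emph{monic} polynomial of degree exactly $13$ with the leading coefficient carrying the factor $A$. This requires patient elimination of the square roots via the quadratic relations for $z_k$ and $x_l$, and recognizing the leading-order cancellations: a naive estimate would give $\psi'(t)\ll T^{\delta_1+\delta_2}$ (from Lemma \ref{w2}), but the structure of the stationary-phase roots forces the leading part of $\phi'_{k,l}(u)$ to vanish identically, leaving behind the much smaller remainder of size $AT^{\delta_1-12\delta_2}$. This cancellation is precisely what enables the degree-$13$ Van der Corput bound supplied by Lemma \ref{s3} to yield the nontrivial estimate \eqref{mainest} of Theorem \ref{average}.
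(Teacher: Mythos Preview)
Your envelope-theorem computation of $\phi'_{k,l}(u)$ is correct and matches \eqref{11.47} in the paper exactly. The gap is in the change of variable. You propose the linear rescaling $t=T^{\delta_2}u$, but this does not remove either of the two square roots $\sqrt{c_1^2/(1-u^2)-c_0}$ (coming from $z_k$) and $\sqrt{c_2^2/(u^{-2}-1)-\tilde c_0}$ (coming from $x_l$). The quadratic relations you cite only reduce $z_k^2$ to something linear in $z_k$; they do not remove $z_k$ itself, which still carries a square root in $u$. With two independent square roots still present, you cannot get a polynomial numerator of degree $13$ by ``clearing denominators''; a double rationalization would be needed and the degree count and leading coefficient would both be different.

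The paper's substitution is the nonlinear one
\[
t:=\sqrt{c_2^2-\tilde c_0(u^{-2}-1)},
\]
chosen precisely so that the $x_l$-square root becomes $t$ itself; see \eqref{11.48}. After this, the $\beta$-contribution becomes the rational function $P_1(t)^{3/2}Q_1(t)/(P_2(t)Q_2(t))$ (Lemma after \eqref{11.48}), while the $\alpha$-contribution still carries one square root $\sqrt{P_1(t)P_3(t)}$. That remaining root is removed by the explicit rationalization step \eqref{11.59}--\eqref{11.60}: multiply and divide by $2c_1P_1^{1/2}P_3^{1/2}F_\pm - G$. Only then does the numerator become a genuine polynomial of degree $12$ in $t$, and the leading coefficient is computed to be proportional to
\[
4c_1^2(c_1^2-c_0)-(2c_1^2-(c_0-1))^2=-(c_0-1)^2\bigl(4c_1^2/(c_0-1)^2-1\bigr),
\]
which is where the factor $A$ actually enters. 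The thirteenth power of $t$ comes from the Jacobian $du=-P_1'(t)/(2P_1(t)^{3/2})\,dt$ with $P_1'(t)=-2t/\tilde c_0$. Your outline never performs this rationalization and so cannot recover either the degree $13$ or the constant $A$. The derivative bounds you quote also do not match: with $u(t)=T^{-\delta_2}t$ and Lemma~\ref{w2} you would get $\psi^{(j)}\ll((B_1+B_2)T^{\delta_1})^j$, not $B_3^j$; the correct factor $(1+T^{-\delta_1/2-\delta_4/2})$ in $B_3$ arises from the size $t\asymp T^{-\delta_1/2-\delta_4/2+\delta_2}$ under the paper's substitution, via \eqref{P1}.
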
	
\begin{proof}
It follows from \eqref{11.44} and \eqref{phikl} that
\begin{equation}\label{11.47} 
\begin{aligned}
\phi'_{k,l}(u)=&-(T_2/T_1)\epsilon_1\alpha'(u)\left(\frac{1}{\phi_{R_1}}\frac{\partial \phi_{R_1}(\alpha,v)}{\partial\alpha}\right)\Biggr|_{(\alpha,v)=(\epsilon_1\alpha(u),z_k(\epsilon_1\alpha(u)))}\\
&+(T_2/T_1)\epsilon'_1\beta'(u)\left(\frac{1}{\psi_{R_2}}\frac{\partial \psi_{R_2}(\alpha,v)}{\partial\alpha}\right)\Biggr|_{(\alpha,v)=(\epsilon'_1\beta(u),x_l(\epsilon'_1\beta(u)))}\\
&+\frac{3(T_2/T_1)}{u}.
\end{aligned}
\end{equation}We carefully evaluate each of the first two terms in the r.h.s. of \eqref{11.47} in the following two lemma. Denote 
\begin{equation}\label{11.48}
t:=\sqrt{c^2_2-\tilde{c}_0(u^{-2}-1)}.
\end{equation}We begin with the second term of \eqref{11.47}.
\begin{lemma}\label{•}
We have
\begin{equation}\label{der1}
(T_2/T_1)\epsilon'_1\beta'(u)\left(\frac{1}{\psi_{R_2}}\frac{\partial \psi_{R_2}(\alpha,v)}{\partial\alpha}\right)\Biggr|_{(\alpha,v)=(\epsilon'_1\beta(u),x_l(\epsilon'_1\beta(u)))}=\frac{-\epsilon'_2(P_1(t))^{3/2}Q_1(t)}{P_2(t)Q_2(t)},
\end{equation}where
\begin{equation}\label{1150}
\begin{aligned}
&P_1(t)= -t^2/\tilde{c}_0+(1+c_2^2/\tilde{c}_0),\,\,\,P_2(t)=(c_2^2-t^2)/\tilde{c}_0,\\
&Q_1(t)=(-(\tilde{c}_0+1)/\tilde{c}_0)t^2\pm 2c_2t-(\tilde{c}_0-1)c_2^2/\tilde{c}_0,\\&Q_2(t)=(-(\tilde{c}_0-1)/\tilde{c}_0)t^2\pm 2c_2t-(\tilde{c}_0+1)c_2^2/\tilde{c}_0
\end{aligned}
\end{equation}where $t$ as in \eqref{11.48}. Furthermore, for $u\in \text{supp}(H_{k,l})$, the sizes of these polynomials are 
\begin{equation}\label{11.50}
\begin{aligned}
P_1(t)\asymp T^{2\delta_2},\,P_2(t)\asymp T^{-\delta_1+2\delta_2}\,\,\text{and}\,\,\,\, Q_1(t),Q_2(t)\ll T^{2\delta_2+\epsilon}.
\end{aligned}
\end{equation}
\end{lemma}	
\begin{proof}
Using the fact that $x_l:=x_l(\epsilon'_1\beta(u))$ satisfies \eqref{11.44}, we get
\begin{equation}\notag
\frac{1}{\psi_{R_2}(\epsilon'_1\beta(u), x_l(\epsilon'_1\beta(u)))}=-\frac{\epsilon'_2T_1}{\beta(u) (R_2x_l+(R_2x_l)^{-1})},
\end{equation}and hence
\begin{equation}\label{2ndterm}
T_2\epsilon'_1\beta'(u)\left(\frac{1}{\psi_{R_2}}\frac{\partial \psi_{R_2}(\alpha,v)}{\partial\alpha}\right)\Biggr|_{(\alpha,v)=(\epsilon'_1\beta(u),x_l(\epsilon'_1\beta(u)))}=-\frac{\epsilon_2'T_1(x_l^2-R_2^{-2})}{u(1-u^2)(x_l^2+R_2^{-2})}.
\end{equation}Substituting the roots (see \eqref{11.45} in case of $z_k$) ,we get
\begin{equation}\label{xl}
\begin{aligned}
\frac{x_l^2-R_2^{-2}}{x_l^2+R_2^{-2}}&=\frac{(\tilde{c}_0+1)(u^{-2}-1)-2c_2^2\pm 2c_2\sqrt{c_2^2-\tilde{c}_0(u^{-2}-1)}}{(\tilde{c}_0-1)(u^{-2}-1)-2c_2^2\pm 2c_2\sqrt{c_2^2-\tilde{c}_0(u^{-2}-1)}}.
\end{aligned}
\end{equation}In terms of $t=\sqrt{c^2_2-\tilde{c}_0(u^{-2}-1)}$, the numerator becomes
\begin{equation*}
\begin{aligned}
(\tilde{c}_0+1)(u^{-2}-1)-2c_2^2\pm 2c_2\sqrt{c_2^2-\tilde{c}_0(u^{-2}-1)}&=-(\tilde{c}_0+1)t^2/\tilde{c}_0\pm 2c_2t - (\tilde{c}_0-1)c_2^2/\tilde{c_0}\\
&=Q_1(t),
\end{aligned}
\end{equation*}and the denominator
\begin{equation*}
\begin{aligned}
(\tilde{c}_0-1)(u^{-2}-1)-2c_2^2\pm 2c_2\sqrt{c_2^2-\tilde{c}_0(u^{-2}-1)}&=-(\tilde{c}_0-1)t^2/\tilde{c}_0\pm2 c_2t - (\tilde{c}_0+1)c_2^2/\tilde{c_0}\\
&=Q_2(t).
\end{aligned}
\end{equation*}We also get
\begin{equation}\notag
u^{-1}=\sqrt{-t^2/\tilde{c}_0+(1+c_2^2/\tilde{c}_0)}=\sqrt{P_1(t)},
\end{equation}
\begin{equation}\notag
u^{-2}-1=(c_2^2-t^2)/\tilde{c}_0=P_2(t).
\end{equation}
 The first part of the claim follows after substituting the above transformations into \eqref{2ndterm}. For the sizes of the polynomials, note from the definitions it follows $P_1(t)=u^{-2}\asymp T^{2\delta_2}$ and $P_2(t)= u^{-2}-1\asymp T^{-\delta_1+2\delta_2}$. For $Q_1, Q_2$, we have from \eqref{xl}, 
\begin{equation}\notag
 Q_1(t)=R_2^2(u^{-2}-1)(x_l^2-R_2^{-2}), Q_2(t)=R_2^2(u^{-2}-1)(x_l^2+R_2^{-2}).
 \end{equation} Now from the support of $\omega_{R_2}(\epsilon_1'\beta(u), x_l(\epsilon'_1\beta(u)))$ function  in \eqref{hl}, it follows $x_l\ll 1+R_2^{-1}$. Hence
 \begin{equation}\label{Q1Q2bd}
 Q_1(t),Q_2(t)\ll T^{-\delta_1+2\delta_2}(1+R_2)^2\asymp T^{-\delta_1+2\delta_2}(T/\beta(u))^2\ll T^{2\delta_2+\epsilon},
 \end{equation}since $\beta(u)\asymp T^{-\delta_1/2}T^{\delta_2}\theta_2^{1/2}\asymp  T^{-\delta_1/2} \theta_1^{1/6}\theta_2^{1/3}\gg T^{-\delta_1/2} T^{1-\epsilon}$. 
 
\end{proof}
\begin{lemma}\label{•}
We have
\begin{equation}\label{der2}
\begin{aligned}
&-(T_2/T_1)\epsilon_1\alpha'(u)\left(\frac{1}{\phi_{R_1}}\frac{\partial \phi_{R_1}(\alpha,v)}{\partial\alpha}\right)\Biggr|_{(\alpha,v)=(\epsilon_1\alpha(u),z_k(\epsilon_1\alpha(u)))}\\
&=\frac{\epsilon_2\sqrt{P_1(t)}(Q_3(t)\pm c_1\sqrt{P_1(t)P_3(t)})}{P_2(t)(Q_4(t)\pm c_1\sqrt{P_1(t)P_3(t)})},
\end{aligned}
\end{equation}where
\begin{equation*}
\begin{aligned}
&P_3(t)=(c_1^2-c_0)P_1(t)-c_0,\,\,\,\, Q_3(t)=(c_0+1)P_2(t)-2c_1^2P_1(t), \\
&Q_4(t)=(c_0-1)P_2(t)-2c_1^2P_1(t),
\end{aligned}
\end{equation*}where $P_1(t)$ and $P_2(t)$ as in \eqref{1150}. The sizes are given by
\begin{equation}\label{11.55}
\begin{aligned}
&(Q_3(t)\pm c_2\sqrt{P_1(t)P_3(t)}),\,\,(Q_4(t)\pm c_2\sqrt{P_1(t)P_3(t)})\ll T^{-\delta_1}\ll T^{\epsilon}.
\end{aligned}
\end{equation}
\end{lemma}	
\begin{proof}
Using the fact that $z_k:=z_k(\epsilon_1\alpha(u))$ satisfies \eqref{11.44} we get
\begin{equation}\notag
\frac{1}{\phi_{R_1}(\epsilon_1\alpha(u),z_k(\epsilon_1\alpha(u)))}=-\frac{\epsilon_2T_1}{\alpha(u)(R_1z_k+(R_1z_k)^{-1})}
\end{equation}and hence
\begin{equation}\label{11.51}
\begin{aligned}
-T_2\epsilon_1\alpha'(u)\left(\frac{1}{\phi_{R_1}}\frac{\partial \phi_{R_1}(\alpha,v)}{\partial\alpha}\right)\Biggr|_{(\alpha,v)=(\epsilon_1\alpha(u),z_k(\epsilon_1\alpha(u)))}&=\frac{\epsilon_2T_1u (z_k^2-R_1^{-2})}{(1-u^2)(z_k^2+R_1^{-2})}\\
&=\frac{\epsilon_2T_1 (z_k^2-{R_1}^{-2})}{u(u^{-2}-1)(z_k^2+{R_1}^{-2})}
\end{aligned}
\end{equation}Substituting the roots from \eqref{11.45} we get
\begin{equation*}
\begin{aligned}
\frac{ (z_k^2-R_1^{-2})}{(z_k^2+R_1^{-2})}&=\frac{(c_0+1)(1-u^2)-2c_1^2\pm 2c_1\sqrt{c_1^2-c_0(1-u^2)}}{(c_0-1)(1-u^2)-2c_1^2\pm 2c_1\sqrt{c_1^2-c_0(1-u^2)}}\\
&=\frac{(c_0+1)(u^{-2}-1)-2c_1^2u^{-2}\pm 2c_1u^{-1}\sqrt{c_1^2u^{-2}-c_0(u^{-2}-1)}}{(c_0-1)(u^{-2}-1)-2c_1^2u^{-2}\pm 2c_1u^{-1}\sqrt{c_1^2u^{-2}-c_0(u^{-2}-1)}}
\end{aligned}
\end{equation*}In terms of $t=\sqrt{c^2_2-\tilde{c}_0(u^{-2}-1)}$, the numerator becomes
\begin{equation*}
\begin{aligned}
&(c_0+1)(u^{-2}-1)-2c_1^2u^{-2}\pm 2c_1u^{-1}\sqrt{c_1^2u^{-2}-c_0(u^{-2}-1)}\\
&=(c_0+1)P_2(t)-2c_1^2P_1(t)\pm 2c_1\sqrt{P_1(t)}\sqrt{(c_1^2-c_0)P_1(t)+c_0}\\
&=Q_3(t)\pm 2c_1\sqrt{P_1(t)P_3(t)}
\end{aligned}
\end{equation*}and the denominator
\begin{equation*}
\begin{aligned}
&(c_0-1)(u^{-2}-1)-2c_1^2u^{-2}\pm 2c_1u^{-1}\sqrt{c_1^2u^{-2}-c_0(u^{-2}-1)}\\
&=(c_0-1)P_2(t)-2c_1^2P_1(t)\pm 2c_1\sqrt{P_1(t)}\sqrt{(c_1^2-c_0)P_1(t)+c_0}\\
&=Q_4(t)\pm 2c_1\sqrt{P_1(t)P_3(t)}.
\end{aligned}
\end{equation*}
The first part of claim follows after substituting these expressions in \eqref{11.51}. For the last part, following the calculation in \eqref{Q1Q2bd}, we obtain
\begin{equation}\notag
(Q_3(t)\pm c_2\sqrt{P_1(t)P_3(t)}),\,\,(Q_4(t)\pm c_2\sqrt{P_1(t)P_3(t)})\ll T^{-\delta_1}(1+R_1)^2\ll T^{\epsilon}.
\end{equation}
\end{proof}Finally the third term in \eqref{11.47} in terms of $t=\sqrt{c^2_2-\tilde{c}_0(u^{-2}-1)}$ becomes
\begin{equation}\label{der3}
\frac{3T_2}{u}=3T_2\sqrt{P_1(t)}
\end{equation}where $P_1(t)$ as in \eqref{1150}. Combining \eqref{11.47}, \eqref{der1}, \eqref{der2} and \eqref{der3} we obtain
\begin{equation}\label{11.58}
\phi_{k,l}'(u)=\frac{-\epsilon'_2T_1(P_1(t))^{3/2}Q_1(t)}{P_2(t)Q_2(t)}+\frac{\epsilon_2T_1\sqrt{P_1(t)}(Q_3(t)\pm 2c_1\sqrt{P_1(t)P_3(t)})}{P_2(t)(Q_4(t)\pm 2c_1\sqrt{P_1(t)P_3(t)})}+3T_2\sqrt{P_1(t)}.
\end{equation}We now rationalise the expression in \eqref{11.58} so that numerator becomes a polynomial in $t$. Let $P_i,Q_i$ denote the shorthand for $P_i(t),Q_i(t)$. Simplifying we get
\begin{equation}\label{11.59}
\begin{aligned}
\phi_{k,l}'(u)=\frac{T_1P_1^{1/2}\left(2c_1P_1^{1/2}P_3^{1/2}F_{\pm}+G\right)}{P_2Q_2(Q_4\pm 2c_1P_1^{1/2}P_3^{1/2})}\\
\end{aligned}
\end{equation}where $F_{\pm}:=\mp\epsilon'_2P_1Q_1\pm\epsilon_2Q_2\pm cP_2Q_2$ and $G:=-\epsilon'_2P_1Q_1Q_4+\epsilon_2Q_2Q_3+(3T_2/T_1)P_2Q_2Q_4$. Multiplying and dividing $2c_1P_1^{1/2}P_3^{1/2}F_{\pm}-G$, we get
\begin{equation}\label{11.60}
\begin{aligned}
\phi_{k,l}'(u)&=\frac{T_1P_1^{1/2}\left(4c_1^2P_1P_3F^2_{\pm}-G^2\right)}{P_2Q_2(Q_4\pm 2c_1P_1^{1/2}P_3^{1/2})(2c_1P_1^{1/2}P_3^{1/2}F_{\pm}-G)}.
\end{aligned}
\end{equation}Note that $4c_1^2P_1P_3F^2_{\pm}$ is a degree 12 polynomial in $t$ with leading coefficient 
\begin{equation*}
\frac{4c_1^2(c_1^2-c_0)(\epsilon'_2(\tilde{c}_0+1)-(3T_2/T_1)(\tilde{c}_0-1))^2}{\tilde{c}_0^6}=\left(\frac{4T_2}{\tilde{c}_0^3(T_2+\epsilon'_2T_1)}\right)^24c_1^2(c_1^2-c_0),
\end{equation*}and $G^2$ is also degree 12 polynomial with leading coefficient
\begin{equation*}
\frac{(2c_1^2-(c_0-1))^2(\epsilon'_2(\tilde{c}_0+1)-(3T_2/T_1)(\tilde{c}_0-1))^2}{\tilde{c}_0^6}=\left(\frac{4T_2}{\tilde{c}_0^3(T_2+\epsilon'_2T_1)}\right)^2(2c_1^2-(c_0-1))^2.
\end{equation*}Hence, the numerator $c_1^2P_1P_3F^2_{\pm}-G^2$ in \eqref{11.60} is a degree 12 polynomial in $t$ with leading coefficient 
\begin{equation}
\left(\frac{4T_2}{\tilde{c}_0^3(T_2+\epsilon'_2T_1)}\right)^2\left(4c_1^2(c_1^2-c_0)-(2c_1^2-(c_0-1))^2\right)\asymp \left(4c_1^2/(c_0-1)^2-1\right) =:A
\end{equation}Also, from the bounds in \eqref{11.50}, \eqref{11.55}, we get
\begin{equation}\label{11.62}
2c_1P_1^{1/2}P_3^{1/2}F_{\pm}\pm G \ll T^{6\delta_2+\epsilon}.
\end{equation}\eqref{11.62} combined with the bounds from \eqref{11.50} and \eqref{11.55} shows that the denominator in \eqref{11.60} is bounded by 
\begin{equation}
T^{10\delta_2-\delta_1+\epsilon}.
\end{equation}Summarising the above, we have
\begin{equation*}
\phi_{k,l}'(u)=T_1\left(A T^{\delta_1-9\delta_2-\epsilon}\right)\frac{P(t)}{f(t)},
\end{equation*}where $P(t)$ is a degree 12 monic polynomial in $t$ and $f(t)\ll 1$ is a smooth bounded function. Hence, after the change of variable $\sqrt{c_2^2-\tilde{c}_0(u^{-2}-1)}\mapsto t$, the above discussion boils down to the equality
\begin{equation*}
\int_{0}^{1}H_{k,l}(u)e\left(\frac{T_1\phi_{k,l}(u)}{2\pi}\right)\,du=\int_{\mathbb{R}}F(t)e(T_1\psi (t))\,dt,
\end{equation*}where
\begin{equation*}
F(t)=-\frac{P_1'(t)H_{k,l}((P_1(t))^{-1/2})}{2(P_1(t))^{3/2}},\,\,\,\,  \psi(t)=\frac{\phi_{k,l}((P_1(t))^{-1/2})}{2\pi}
\end{equation*}such that
\begin{equation}\label{11.65}
\psi'(t)\asymp K_0\cdot\frac{tP(t)}{T^{-3\delta_2}(P_1(t))^{3/2}f(t)},
\end{equation}where
\begin{equation*}
K_0:=AT^{\delta_1-12\delta_2}
\end{equation*}Here $P_1(t)$ is as in \eqref{11.50} with size $T^{2\delta_2}$, so that the denominator in \eqref{11.65} is a smooth bounded function, and the numerator, $tP(t)$, is a degree 13 monic polynomial. This completes the proof of first part of the lemma. For the second part, note that for $\alpha\in\mathbb{R}$ and $P_1(t)=-t^2/\tilde{c}_0+(1+c_2^2/\tilde{c}_0)\asymp T^{2\delta_2}, t\asymp T^{-\delta_4/2-\delta_1/2+\delta_2}$, using the Fa\'a di Bruno's formula \ref{bruno}, we obtain
\begin{equation}\label{P1}
\begin{aligned}
&\frac{d^j}{dt^j}P_1(t)^{\alpha}\ll_j \sum_{j_1+2j_2=j} (T^{2\delta_2})^{\alpha-j_1-j_2}T^{j_1(-\delta_4/2-\delta_1/2+\delta_2)}\\
&=\sum_{j_1+2j_2=j}T^{2\delta_2\alpha} (T^{2\delta_2})^{-j+j_2}T^{(j-2j_2)(-\delta_4/2-\delta_1/2+\delta_2)}\ll T^{2\delta_2\alpha}T^{-2\delta_2j}\sum_{j_2\leq j/2}T^{j_2(\delta_4+\delta_1)}\\
&\ll T^{2\delta_2\alpha}(T^{-2\delta_2}(1+T^{\delta_1/2+\delta_4/2}))^j,
\end{aligned}
\end{equation}for $j\geq 0$. Using the above with $\alpha=-1/2$, and using Lemma \ref{w1} and the Fa\'a di Bruno's formula \ref{bruno} we obtain
\begin{equation}\notag
\begin{aligned}
\frac{d^j}{dt^j}H_{k,l}((P_1(t))^{-1/2})&\ll_j T^{\delta_2}(T^{-2\delta_2}(1+T^{\delta_1/2+\delta_4/2}))^j\sum_{\sum kj_k=j}((B_1+B_2)T^{\delta_1+\delta_2})^{\sum j_k}T^{-\delta_2\sum j_k}\\
&\ll T^{\delta_2}((B_1+B_2)T^{\delta_1-2\delta_2}(1+T^{\delta_1/2+\delta_4/2}))^j.
\end{aligned}
\end{equation}Using the last inequality and \eqref{P1} with $\alpha=-3/2$, we deduce
\begin{equation}\notag
\frac{d^j}{dt^j}\frac{H_{k,l}((P_1(t))^{-1/2})}{P_1(t)^{3/2}}\ll_j T^{-2\delta_2}((B_1+B_2)T^{\delta_1-2\delta_2}(1+T^{\delta_1/2+\delta_4/2}))^j.
\end{equation}Since $P'_1(t)=-2t/\tilde{c}_0\asymp T^{-\delta_4/2-\delta_1/2+\delta_2} $, we finally obtain
\begin{equation}\notag
\begin{aligned}
F^{(j)}(t)&\ll_j|t|\frac{d^j}{dt^j}\frac{H_{k,l}((P_1(t))^{-1/2})}{P_1(t)^{3/2}}+\frac{d^{j-1}}{dt^{j-1}}\frac{H_{k,l}((P_1(t))^{-1/2})}{P_1(t)^{3/2}}\\
&\ll_j T^{-\delta_2}(1+T^{-\delta_4/2-\delta_1/2})((B_1+B_2)T^{\delta_1-\delta_2}((T^{-\delta_1/2-\delta_4/2}+1))^j\\
&=T^{-\delta_2}(1+T^{-\delta_4/2-\delta_1/2})B_3^{j}.
\end{aligned}
\end{equation}
Similarly, using  Lemma \ref{w2} one can deduce
\begin{equation*}
\begin{aligned}
\psi^{(j)}(t)&\ll_j \left((B_1+B_2)T^{\delta_1}(T^{-\delta_2-\delta_1/2-\delta_4/2}+T^{-\delta_2})\right)^j,\,\,j\geq 1\\
&=B_3^{j}.
\end{aligned},
\end{equation*}For the last part, note that for $u\in \text{supp} (G(u))$, where $G$ as in \eqref{g},  $$t=\sqrt{c_2^2-\tilde{c}_0(u^{-2}-1)}\asymp T^{-\delta_1/2-\delta_4/2+\delta_2}.$$ Similarly for $H_{k,l}$ as in \eqref{h} we have $H_{k,l}(u)\ll T^{\delta_2}$. Lastly from \eqref{11.50} we have $P_1(t)\asymp T^{2\delta_2}$. Hence we get
\begin{equation*}
F(t)=-\frac{P_1'(t)H_{k,l}((P_1(t))^{-1/2})}{2(P_1(t))^{3/2}}\ll T^{-\delta_2-\delta_1/2-\delta_4/2}.
\end{equation*}This completes the proof the the lemma.
\end{proof}Applying Lemma \ref{s3} to the right hand side of \eqref{reduc} we obtain
\begin{equation}\label{finalbd}
\begin{aligned}
&\int_{0}^{1}H_{k,l}(u)e\left(\frac{\phi_{k,l}(u)}{2\pi}\right)\,du\\
&\ll T^{-\delta_2-\delta_1/2-\delta_4/2}\Bigg((TK_0)^{1/14}+\left(\frac{TK_0}{B_3}\right)^{-1/13}+\left(\frac{T^{1/2}K_0}{B_3}\right)^{-1/13}\Bigg)\\
&\ll T^{-\delta_2-\delta_1/2-\delta_4/2}T^{-1/26}B_3^{1/13}(K_0^{-1/13}+K_0^{-1/14}).
\end{aligned}
\end{equation}Note that we also have the trivial bound
\begin{equation}\label{trivialbd}
\int_{0}^{1}H_{k,l}(u)e\left(\frac{\phi_{k,l}(u)}{2\pi}\right)\,du\ll \min\{T^{-\delta_1}, T^{-\delta_3+\delta_2}, T^{-\delta_4}\}
\end{equation}coming from the size of support of $H_{k,l}(u)$. We now substitute the above obtained bounds in \eqref{uint} according to the following subcases.\\
\\
\emph{Subcase 1.} 
\begin{equation}\label{sbsb1}
A> T^{-\gamma}.
\end{equation}Let us recall that
\begin{equation*}
B_3=(B_1+B_2)T^{\delta_1-\delta_2}(1+T^{-\delta_1/2-\delta_4/2}),
\end{equation*}where
\begin{equation}\notag
B_1=(1+T^{\delta_3})\left(1+R_1^{-1}T^{-\delta_3/2}\right),\,\,B_2=(1+T^{\delta_4})\left(1+R_2^{-1}T^{-\delta_4/2}\right)
\end{equation}and
\begin{equation*}
K_0=AT^{\delta_1-12\delta_2}.
\end{equation*}

Let $\delta_{max}=\max\{\delta_1,\delta_3,\delta_4\}$ and $\delta_{min}=\min\{\delta_1,\delta_3,\delta_4\}$. Suppose first that $\delta_{max}\leq a$, for some $a>0$ to be chosen in a moment. Note that from Lemma \ref{sign} we have $T^{-\delta_3}\ll T^{\delta_1}$ and $T^{-\delta_4}\ll T^{\delta_1}$. Consequently,
\begin{equation}
B_3\ll T^{-\delta_2+3\delta_{max}+\beta},\,\,K_0= AT^{\delta_1-12\delta_2}.
\end{equation}Hence from \eqref{finalbd} we obtain
\begin{equation}\notag
\begin{aligned}
&\int_{0}^{1}H_{k,l}(u)e\left(\frac{\phi_{k,l}(u)}{2\pi}\right)\,du\\
&\ll T^{-\delta_2-\delta_1/2-\delta_4/2}T^{-1/26-\delta_1/14+12\delta_2/13}T^{-\delta_2/13+3\delta_{\max}/13+\beta/13}(A^{-1/13}+A^{-1/14}).
\end{aligned}
\end{equation}Substituting, we see that the contribution of this case towards \eqref{uint} is bounded by
\begin{equation}\label{s1ss2}
\begin{aligned}
&\frac{T^{-\delta_2/2}}{T(\theta_1\theta_2)^{1/4}}\cdot T^{\delta_3/4+\delta_4/4+\delta_1/2}\cdot T^{-\delta_1/2-\delta_4/2} T^{-(1-2(\beta+\gamma))/26}T^{3\delta_{\max}/13}\\
&\ll \frac{T^{-\delta_2/2}}{T(\theta_1\theta_2)^{1/4}}\cdot T^{\delta_{max}/4}T^{-(1-2(\beta+\gamma))/26+3\delta_{max}/13}\\
&\ll \frac{T^{-\delta_2/2}}{T(\theta_1\theta_2)^{1/4}}\cdot T^{-(1-2(\beta+\gamma))/26+a/2}\asymp T^{-1}(\theta_1\theta_2)^{-1/4} (\theta_2/\theta_1)^{1/12}T^{-(1-2(\beta+\gamma))/26+a/2}\\
&=(\theta_2/\theta_1)^{1/6}T^{-1}(\theta_1^{1/6}\theta_2^{1/3})^{-1}T^{-(1-2(\beta+\gamma))/26+a/2}\\
&\ll (\theta_2/\theta_1)^{1/6}T^{-2}T^{-(1-2(\beta+\gamma))/26+a/2},
\end{aligned}
\end{equation}where we have used the fact that $T^{\delta_2}\asymp (\theta_2/\theta_1)^{1/6}$ and the assumption \eqref{uvup}. Now suppose $\delta_{max}>a>0$ and $\delta_{max}-\delta_{min}\geq \delta_{max}/2$.
Then, using the trivial bound \eqref{trivialbd}, we see that the contribution of this case towards \eqref{uint} is bounded by
\begin{equation}\notag
\begin{aligned}
&\frac{T^{-\delta_2/2}}{T(\theta_1\theta_2)^{1/4}}\cdot T^{\delta_3/4+\delta_4/4+\delta_1/2}\min\{T^{-\delta_1}, T^{-\delta_3+\delta_2},T^{-\delta_4}\}\\
&\ll\frac{T^{\epsilon}T^{-\delta_2/2}}{T(\theta_1\theta_2)^{1/4}}\cdot T^{\delta_3/4+\delta_4/4+\delta_1/2}T^{-\delta_{max}}\ll\frac{T^{\epsilon}T^{-\delta_2/2}}{T(\theta_1\theta_2)^{1/4}}\cdot T^{-(\delta_{max}-\delta_{min})/4}  \\
&\ll (\theta_2/\theta_1)^{1/6} T^{-2-a/8+\epsilon}.
\end{aligned}
\end{equation}
\\
Next suppose $\delta_{max}>a$ and $\delta_{max}-\delta_{min}<\delta_{max}/2$. This implies $\delta_{max}/2<\delta_i\leq\delta_{max}$ for each $i=1,3,4$. By the definitions of $T^{\delta_3}, T^{\delta_4}$ \eqref{defdelta34}, this forces $c^2_1T^{\delta_1}\ll 1, c^2_2T^{\delta_1}\ll 1$ and consequently,
\begin{equation}\notag
c^2_1\ll T^{-\delta_1}\ll T^{-a/2}\,\,\, \text{and} \,\,\,\,c^2_2\ll T^{-\delta_1}\ll T^{-a/2}.
\end{equation}In this case we use the trivial bound \ref{trivialbd} to obtain that the contribution of this case towards \eqref{uint} is bounded by
\begin{equation}\notag
\begin{aligned}
&\frac{T^{-\delta_2/2}}{T(\theta_1\theta_2)^{1/4}} T^{\delta_3/4+\delta_4/4+\delta_1/2} \min\{T^{-\delta_1}, T^{-\delta_3+\delta_2},T^{-\delta_4}\}\ll \frac{T^{\epsilon}T^{-\delta_2/2}}{T(\theta_1\theta_2)^{1/4}} \\
&\ll (\theta_2/\theta_1)^{1/6} T^{-2+\epsilon}.
\end{aligned}
\end{equation}
\\
Summarising the above, the contribution of this sub-case towards \eqref{uint} is 
\begin{equation}\notag
\ll (\theta_2/\theta_1)^{1/6}T^{-2+\epsilon}
\end{equation}if $c_1\ll T^{-a/4}, c_2\ll T^{-a/4}$ and is
\begin{equation}\notag
\ll (\theta_2/\theta_1)^{1/6}T^{-2+\epsilon}\left(T^{-(1-2(\beta+\gamma))/26+a/2}+T^{-a/8}\right)
\end{equation}otherwise. Equating the powers of $T$ in the last inequality, we choose $a=4(1-2(\beta+\gamma))/65$. Substituting, we finally obtain the contribution of this sub-case towards \eqref{uint} is 
\begin{equation}\label{c3}
\ll  (\theta_2/\theta_1)^{1/6}T^{-2+\epsilon}
\end{equation}if $\min\{c_1,c_2\}\ll T^{-(1-2(\beta+\gamma))/65}$, and is
\begin{equation}\label{c4}
\ll   (\theta_2/\theta_1)^{1/6} T^{-2-(1-2(\beta+\gamma))/130+\epsilon}
\end{equation}otherwise.\\
\\
\emph{Subcase 2.} 
\begin{equation}\label{sbsb1}
A\leq  T^{-\gamma}.
\end{equation}
In this case we use the trivial estimate \eqref{trivialbd} to see that its contribution towards \eqref{uint} is at most
\begin{equation}\label{c5}
\begin{aligned}
&\ll T^{-1}(\theta_1\theta_2)^{-1/4}T^{\delta_1/2+\delta_3/4+\delta_4/4-\delta_2/2}\min\{T^{-\delta_1}, T^{-\delta_3+\delta_2}, T^{-\delta_4}\}\\
&\ll   T^{-1+\delta_2}(\theta_1\theta_2)^{-1/4} T^{-\delta_2/2}\asymp T^{-1+\epsilon}(\theta_1\theta_2)^{-1/4} (\theta_2/\theta_1)^{1/12}= T^{-1+\epsilon}(\theta_2/\theta_1)^{1/6} (\theta_1^{1/6}\theta_2^{1/3})^{-1}\\
&\ll (\theta_2/\theta_1)^{1/6} T^{-2+\epsilon}.
\end{aligned}
\end{equation}
\subsubsection*{\bf{\textit{Case 2 :}}} $T^{-\beta}\min\{\alpha^{-1},T^{-1}\}\ll R_1$, $T^{-\beta}\min\{\beta^{-1},T^{-1}\}\ll R_2$ and either
\begin{equation}\label{1ste}
T^{-\delta_3/2}\ll \left(\frac{T}{\alpha(u)}+1\right)T^{-1/3+2\epsilon}
\end{equation}or
\begin{equation}\label{2nd}
T^{-\delta_4/2}\ll \left(\frac{T}{\beta(u)}+1\right)T^{-1/3+2\epsilon}.
\end{equation}Note that from the bounds in \eqref{eq1123} and \eqref{bd2}, we have
\begin{equation}\label{degbd1}
I_{R_1}^{\epsilon_1,\epsilon_2}(U,\alpha(u))\ll \frac{T^{\delta_3/4}}{\alpha(u)^{1/2}}+T^{-1/3}\asymp \frac{T^{\delta_1/4+\delta_3/4}}{\theta_1^{1/4}}+T^{-1/3}\ll T^{-1/2+\delta_1/4+\delta_3/4}+T^{-1/3}.
\end{equation}Similarly, we have
\begin{equation}\label{degbd2}
\tilde{I}_{R_2}^{\epsilon'_1,\epsilon'_2}(V,\beta(u))\ll T^{-1/2+\delta_1/4+\delta_4/4}+T^{-1/3}.
\end{equation}

Let $0<a<1/3$ be a quantity to be chosen in a moment.\\
\\
\emph{Subcase 1.} $\delta_1<1/3-a$.\\
\\
WLOG, suppose \eqref{1ste} holds. Then, using $\delta_1<1/3-a$, we obtain
\begin{equation}\label{delta3bd}
\begin{aligned}
T^{-\delta_3}\ll T^{-2/3+4\epsilon}\left(\frac{T}{\alpha(u)}+1\right)^2\ll T^{-2/3+4\epsilon+\delta_1}\left(\frac{T}{\theta_1^{1/2}}+1\right)^2\ll T^{-1/3-a+4\epsilon}.
\end{aligned}
\end{equation}Similar bounds holds in case \eqref{2nd} holds. Hence in any case, if $\delta_{max}=\max\{\delta_1,\delta_3,\delta_4\}$, then we have
\begin{equation}\label{maxsize}
T^{-\delta_{max}}\ll T^{-1/3-a+4\epsilon}.
\end{equation}
Using the above, the bounds \eqref{degbd1}, \eqref{degbd2} for the integrals and then trivially execute the $u$-integral in \eqref{os}  we get
\begin{equation}\label{uint2}
\begin{aligned}
&\frac{1}{T}\int_{0}^{1}\frac{G(u)}{u}I_{R_1}^{\epsilon_1,\epsilon_2}(U,\alpha(u)) \tilde{I}_{R_2}^{\epsilon'_1,\epsilon'_2}(V,\beta(u)) u^{3iT_2}\,du\\
&\ll T^{-1}(T^{-1/2+\delta_1/4+\delta_{max}/4}+T^{-1/3})^2\min\{T^{-\delta_1}, T^{-\delta_3+\delta_2},T^{- \delta_4}\}\\
&\ll T^{-1+\delta_2}(T^{-1+\delta_1/2+\delta_{max}/2}+T^{-2/3})T^{-\delta_{max}}
\end{aligned}
\end{equation}Using $\delta_1<1/3-a, \delta_2\leq \epsilon $ and $T^{-\delta_{max}}\ll T^{-1/3-a+4\epsilon}$ we obtain
\begin{equation}\notag
\frac{1}{T}\int_{0}^{1}\frac{G(u)}{u}I_{R_1}^{\epsilon_1,\epsilon_2}(U,\alpha(u)) \tilde{I}_{R_2}^{\epsilon'_1,\epsilon'_2}(V,\beta(u)) u^{3iT_2}\,du\ll T^{-2-a+5\epsilon}\ll (\theta_2/\theta_1)^{1/6} T^{-2-a+6\epsilon}.
 \end{equation} 
\\
\\
\emph{Subcase 2.} $1/3-a\leq \delta_1\leq 1/3$ and $\delta_2=0$.\\
\\
WLOG, suppose \eqref{1ste} holds. If $c^2_1T^{\delta_1}\gg 1$, then from the definition \eqref{defdelta34} it follows $T^{-\delta_3}\asymp c^2_1T^{\delta_1}$. Substituting this in \eqref{1ste} we obtain
\begin{equation}\notag
\begin{aligned}
c_1T^{\delta_1/2}\ll \left(\frac{T}{\alpha(u)}+1\right)T^{-1/3+2\epsilon} \ll T^{\delta_1/2-1/3+2\epsilon},
\end{aligned}
\end{equation}that is
\begin{equation}\label{b1}
c_1\ll T^{-1/3+2\epsilon}.
\end{equation}
On the other hand if $c^2_1T^{\delta_1}\ll 1$, then by the assumption $\delta \geq 1/3-a$ we get
\begin{equation}\label{b2}
c_1\ll T^{-1/6+a/2}.
\end{equation}So, in any case we have
\begin{equation}\label{c1bd}
c_1\ll  T^{-1/6+a/2}.
\end{equation}in this case. For the integral bound in this case, note that the arguments leading upto \eqref{maxsize} gives
\begin{equation}\notag
T^{-\delta_{max}}\ll T^{-1/3+4\epsilon}
\end{equation}in this case. Consequently, from the bound \eqref{uint2} we get
\begin{equation}\notag
\frac{1}{T}\int_{0}^{1}\frac{G(u)}{u}I_{R_1}^{\epsilon_1,\epsilon_2}(U,\alpha(u)) \tilde{I}_{R_2}^{\epsilon'_1,\epsilon'_2}(V,\beta(u)) u^{3iT_2}\,du\ll T^{-2+4\epsilon}\asymp (\theta_2/\theta_1)^{1/6}T^{-2+4\epsilon}
\end{equation}since $(\theta_2/\theta_1)^{1/6}\asymp T^{-\delta_2}=1$ in this sub-case.
For the case where \eqref{2nd} holds, it is clear that the only change is that the bound \eqref{c1bd} holds for $c_2$ instead of $c_1$. Summarising, we obtain that for $1/3-a\leq \delta_1$, either 
\begin{equation}\label{c1bd1}
c_1\ll  T^{-1/6+a/2},
\end{equation} or 
\begin{equation}\label{c2bd}
c_2\ll  T^{-1/6+a/2}
\end{equation}
 holds, and furthermore
\begin{equation}\label{uintbd}
\frac{1}{T}\int_{0}^{1}\frac{G(u)}{u}I_{R_1}^{\epsilon_1,\epsilon_2}(U,\alpha(u)) \tilde{I}_{R_2}^{\epsilon'_1,\epsilon'_2}(V,\beta(u)) u^{3iT_2}\,du\ll (\theta_2/\theta_1)^{1/6}T^{-2+4\epsilon}
\end{equation}We summarise the last two sub-cases above by choosing $a$ such that $-1/6+a/2=-a$, i.e., $a=1/9$ to get
\begin{equation}\label{c6}
\frac{1}{T}\int_{0}^{1}\frac{G(u)}{u}I_{R_1}^{\epsilon_1,\epsilon_2}(U,\alpha(u)) \tilde{I}_{R_2}^{\epsilon'_1,\epsilon'_2}(V,\beta(u)) u^{3iT_2}\,du\ll (\theta_2/\theta_1)^{1/6} T^{-2+4\epsilon},
\end{equation}if $\min\{c_1,c_2\}\ll T^{-1/9} $, and
\begin{equation}\label{c7}
\frac{1}{T}\int_{0}^{1}\frac{G(u)}{u}I_{R_1}^{\epsilon_1,\epsilon_2}(U,\alpha(u)) \tilde{I}_{R_2}^{\epsilon'_1,\epsilon'_2}(V,\beta(u)) u^{3iT_2}\,du\ll (\theta_2/\theta_1)^{1/6}T^{-2-1/9+6\epsilon}
\end{equation}otherwise.
\\
\\
\emph{Subcase 3.} $\delta_1>1/3$ and $\delta_2=0$.\\
\\
Following the above calculations, It is clear that the conclusion \eqref{c1bd1} and \eqref{c2bd} holds for this case with $a=0$, i.e., either
\begin{equation}\label{c8}
c_1\ll  T^{-1/6}\,\,\,\,\text{or}\,\,\,\,c_2\ll  T^{-1/6}
\end{equation}holds. For the integral bound, since $\delta_1>1/3$, we have $T^{-\delta_{max}}\ll T^{-1/3}$ and consequently the bound \eqref{uint2} gives
\begin{equation}\notag
\frac{1}{T}\int_{0}^{1}\frac{G(u)}{u}I_{R_1}^{\epsilon_1,\epsilon_2}(U,\alpha(u)) \tilde{I}_{R_2}^{\epsilon'_1,\epsilon'_2}(V,\beta(u)) u^{3iT_2}\,du\ll T^{-2+\epsilon}\asymp (\theta_2/\theta_1)^{1/6}T^{-2+\epsilon}
\end{equation}since $(\theta_2/\theta_1)^{1/6}\asymp T^{-\delta_2}=1$ in this sub-case. Hence this case gets absorbed into the previous one.
\begin{remark}
Here we outline the necessary modification required  when one of $U$ or $V$ is zero, i.e, one of $c_1$ or $c_2$ is zero. It enough to consider when $c_1=0$. Note that by definition \eqref{g}, $c_1=0$ implies $\delta_3=0$. Suppose first $\delta_1>2/3-100\epsilon$. Now, using the bounds \eqref{degbd1}, \eqref{degbd2} with $\delta_3=0$ and then trivially execute the $u$-integral we obtain
\begin{equation}\notag
\begin{aligned}
&\frac{1}{T}\int_{0}^{1}\frac{G(u)}{u}I_{R_1}^{\epsilon_1,\epsilon_2}(U,\alpha(u)) \tilde{I}_{R_2}^{\epsilon'_1,\epsilon'_2}(V,\beta(u)) u^{3iT_2}\,du\\
&\ll T^{-1}(T^{-1+\delta_1/2+\delta_4/4}+T^{-2/3})\min\{T^{-\delta_1}, T^{- \delta_4}\}\\
&\ll T^{-1}(T^{-1-\delta_1/4}+T^{-2/3-\delta_1})\ll T^{-2-1/6+100\epsilon}\asymp (\theta_2/\theta_1)^{1/6}T^{-2-1/6+100\epsilon},
\end{aligned}
\end{equation}which gets absorbed into previous upper bounds (\eqref{c7} for example). If $\delta_1\leq 2/3-100\epsilon$, then from \eqref{delta3bd} it is clear that \eqref{1ste} cannot hold since $\delta_3=0$. Hence only \eqref{2nd} can hold in this case and we will be left with only the condition \eqref{c2bd} and the second condition in  \eqref{c8} in the sub-cases above.
\end{remark}
\emph{\textbf{Case 3 :}} $T^{\beta}\min\{\alpha^{-1},T^{-1}\}\ll R_1$ , $T^{\beta}\min\{\beta^{-1},T^{-1}\}\ll R_2$,
\begin{equation}
T^{-\delta_3/2}\gg T^{-1/3+2\epsilon}\left(\frac{T}{\alpha(u)}+1\right),
\end{equation}
\begin{equation}\label{2ndcond}
T^{-\delta_4/2}\gg T^{-1/3+2\epsilon}\left(\frac{T}{\beta(u)}+1\right),
\end{equation}and $\min\{R_1,R_2\}\ll T^{-\beta}$. 
\\
\\
Suppose that $T^{\beta}\min\{\alpha^{-1},T^{-1}\}\ll R_1\ll T^{-\beta}$. Then from part \ref{negsmall} of the lemma we have the restriction
\begin{equation}\label{ures1}
\left|\left(\frac{\pi U}{\alpha(u)}\right)\left(\frac{T_1}{T_2}\right)-1\right|\ll T^{\epsilon}(R_1+(R_1\alpha(u)))^{-1})\ll T^{-\beta+\epsilon}
\end{equation}and the bound
\begin{equation}\label{R1}
I_{R_1}^{\epsilon_1,\epsilon_2}(U,\alpha(u)) \ll \alpha(u)^{-1/2}\asymp \frac{T^{\delta_1/4}}{\theta_1^{1/4}}.
\end{equation}Also, since $T^{-\delta_4}$ satisfies \eqref{2ndcond}, arguing as in \eqref{rootdiff}, we have the bound 
\begin{equation}\label{R2}
\tilde{I}_{R_2}^{\epsilon'_1,\epsilon'_2}(V,\beta(u))\ll \frac{T^{\delta_4/3}}{\beta(u)^{1/2}}\asymp \frac{T^{-\delta_2/2+\delta_1/4+\delta_4/4}}{\theta_2^{1/4}}.
\end{equation}Note that we still have the condition $T^{\delta}\ll T^{\epsilon}$ from \eqref{delta2small}. Using the bounds \eqref{R1},\eqref{R2} and trivially executing the $u$-integral with the restriction \eqref{ures1} we obtain
\begin{equation}\label{case3}
\begin{aligned}
&\frac{1}{T}\int_{0}^{1}\frac{G(u)}{u}I_{R_1}^{\epsilon_1,\epsilon_2}(U,\alpha(u)) \tilde{I}_{R_2}^{\epsilon'_1,\epsilon'_2}(V,\beta(u)) u^{3iT_2}\,du\\
&\ll \frac{T^{-\delta_2/2}}{T(\theta_1\theta_2)^{1/4}}T^{\delta_1/2+\delta_4/4}\min\{T^{-\delta_1}, T^{-\delta_3+\delta_2}, T^{-\delta_4}, T^{-\beta+\epsilon+\delta_2}\}\\
&\ll \frac{T^{-\delta_2/2}}{T(\theta_1\theta_2)^{1/4}}T^{-\beta/4+2\epsilon}\ll (\theta_2/\theta_1)^{1/6}T^{-2-\beta/4+3\epsilon}.
\end{aligned}
\end{equation}The same conclusion holds if we instead assume $T^{\beta}\min\{\beta^{-1},T^{-1}\}\ll R_1\ll T^{-\beta}$. \\
\\
\emph{\textbf{Case 4 :}} $R_1\ll T^{\beta}\min\{\alpha^{-1},T^{-1}\}$ or $R_2\ll T^{\beta}\min\{\beta^{-1},T^{-1}\}$.\\
\\
Suppose first that both $R_1\ll T^{\beta}\min\{\alpha^{-1},T^{-1}\}$ and $R_2\ll T^{\beta}\min\{\beta^{-1},T^{-1}\}$ holds. Then a trivial estimation gives
\begin{equation}\label{c41}
\begin{aligned}
&\frac{1}{T}\int_{0}^{1}\frac{G(u)}{u}I_{R_1}^{\epsilon_1,\epsilon_2}(U,\alpha(u)) \tilde{I}_{R_2}^{\epsilon'_1,\epsilon'_2}(V,\beta(u)) u^{3iT_2}\,du\ll \frac{R_1R_2}{T}T^{-\delta_1}\\
&\ll \frac{T^{2\beta-1}}{\alpha(u)\beta(u)}T^{-\delta_1}\asymp \frac{T^{-1+2\beta +\delta_1-\delta_2}}{\theta_1\theta_2}T^{-\delta_1}\ll T^{-\delta_2} T^{-3+\beta}\\
&\asymp  (\theta_2/\theta_1)^{1/6}T^{-3+2\beta}.
\end{aligned}
\end{equation}Now suppose $R_1\ll T^{\beta}\min\{\alpha^{-1},T^{-1}\}$ and  $R_2\gg T^{\beta}\min\{\beta^{-1},T^{-1}\}$. Then from the bounds in \eqref{eq1123} and \eqref{bd2}, we have
\begin{equation}\notag
\tilde{I}_{R_2}^{\epsilon'_1,\epsilon'_2}(V,\beta(u))\ll \frac{T^{\delta_4/4}}{\beta (u)^{1/2}}+T^{-1/3}\ll  T^{-1/2+\delta_1/4+\delta_4/4}+T^{-1/3}.
\end{equation}Using this and the trivial bound $R_1$ for $I_{R_1}^{\epsilon_1,\epsilon_2}(U,\alpha(u))$ we obtain
\begin{equation}\label{c42}
\begin{aligned}
&\frac{1}{T}\int_{0}^{1}\frac{G(u)}{u}I_{R_1}^{\epsilon_1,\epsilon_2}(U,\alpha(u)) \tilde{I}_{R_2}^{\epsilon'_1,\epsilon'_2}(V,\beta(u)) u^{3iT_2}\,du\\
&\ll \frac{R_1}{T}(T^{-1/2+\delta_1/4+\delta_4/4}+T^{-1/3})\min\{T^{-\delta_1}, T^{-\delta_4}\}\\
&\ll \frac{T^{-1+\beta}}{\alpha(u)}(T^{-1/2+\delta_1/4+\delta_4/4}+T^{-1/3})\min\{T^{-\delta_1}, T^{-\delta_4}\}\\
&\ll T^{-\delta_2+\epsilon}T^{-2+\beta}(T^{-1/2+3\delta_1/4+\delta_4/4}+T^{-1/3+\delta_1/2})\min\{T^{-\delta_1}, T^{-\delta_4}\}\\
&\ll T^{-\delta_2+\epsilon}T^{-2-1/3+\beta}\asymp (\theta_2/\theta_1)^{1/6}T^{-2-1/3+\beta+\epsilon},
\end{aligned}
\end{equation}where we have used the inequality $T/\alpha(u)\ll T^{\delta_1/2-\delta_2+\epsilon}$ from  \eqref{delta2aux}. Finally suppose $R_1\gg T^{\beta}\min\{\alpha^{-1},T^{-1}\}$ and  $R_2\ll T^{\beta}\min\{\beta^{-1},T^{-1}\}$, then proceeding similarly as above we get
\begin{equation}\label{c43}
\begin{aligned}
&\frac{1}{T}\int_{0}^{1}\frac{G(u)}{u}I_{R_1}^{\epsilon_1,\epsilon_2}(U,\alpha(u)) \tilde{I}_{R_2}^{\epsilon'_1,\epsilon'_2}(V,\beta(u)) u^{3iT_2}\,du\\
&\ll\frac{T^{-1+\beta}}{\beta(u)}(T^{-1/2+\delta_1/4+\delta_3/4}+T^{-1/3})\min\{T^{-\delta_1}, T^{-\delta_3+\delta_2}\}\\
&\ll T^{-2+\beta+\epsilon}T^{\delta_1/2}(T^{-1/2+\delta_1/4+\delta_3/4}+T^{-1/3})\min\{T^{-\delta_1}, T^{-\delta_3+\delta_2}\}\\
&\ll T^{-2+\beta+\epsilon}(T^{-1/2+3\delta_1/4+\delta_3/4}+T^{-1/3+\delta_1/2})\min\{T^{-\delta_1}, T^{-\delta_3+\epsilon}\}\\
&\ll T^{-2-1/3+\beta+2\epsilon}\ll (\theta_2/\theta_1)^{1/6}T^{-2-1/3+\beta+3\epsilon},
\end{aligned}
\end{equation}where we have used the inequalities $\beta(u)\asymp T^{-\delta_1/2}(T^{\delta_2}\theta_2^{1/2})\gg T^{-\delta_1/2}T^{1-\epsilon}$ and $T^{\delta_2}\ll T^{\epsilon}$ following from \eqref{delta2small}.

We compile the all the above cases to reach at our conclusion \eqref{mainest}. Let
\begin{equation}\notag
 \alpha=1-2(\beta+\gamma),\,\,A=\left|\frac{\pi |U|}{\theta_1^{1/2}}-1\right|.
 \end{equation} 
Combining the estimates from  \eqref{c3}, \eqref{c4}, \eqref{c5}, \eqref{c6}, \eqref{c7}, \eqref{c8}, \eqref{case3}, \eqref{c41}, \eqref{c42} ,\eqref{c43} and substituting in \eqref{K4}  we obtain
\begin{equation}\label{compile}
\begin{aligned}
T^{-100\epsilon}(\theta_1/\theta_2)^{1/6}\mathcal{K}_4(\theta_1,\theta_2, U,V)&\ll T^{3}\Big(T^{-2-\alpha/130}+T^{-2}\delta_{\min\{c_1,c_2\}\ll T^{-\alpha/65}}\\
&+T^{-2}\delta_{A\ll T^{-\gamma}}+T^{-2}\delta_{\min\{c_1,c_2\}\ll T^{-1/9}}\\
&+T^{-2-\beta/4}+T^{-2-1/3+\beta}+T^{-3+2\beta}\Big)
\end{aligned}
\end{equation}Comparing the exponents of the first, third and the fifth term, we choose $\beta,\gamma$ such that
\begin{equation}\notag
\frac{\alpha}{130}=\gamma=\frac{\beta}{4}.
\end{equation}Solving we obtain
\begin{equation}\notag
\gamma=\frac{\beta}{4}=\frac{1}{140}.
\end{equation}Note that with this choice of $\beta$ and $\gamma$, the first, third and the fifth term of \eqref{compile} dominates the others. This completes the proof of Theorem \ref{average}.
\section*{Appendix: An average of Bessel functions with non-linear twist}
For this section we fix a smooth partition of unity
\begin{equation}\notag
\sum_{R}F\left(\frac{r}{R}\right)=1\,\,\text{for}\,\,r\in (0,\infty)
\end{equation}consisting of a sequence of numbers $R\in \mathbb{R}_{> 0}$ and a smooth function $F$ supported on $[1,2]$ and satisfying $F^{(j)}(x)\ll_j 1$.
\begin{lemma}\label{11.1}
Let $\alpha >0, U\in \mathbb{R}$ and $\epsilon>0$ be small enough. Let $\mathcal{U}$ be a smooth weight function compactly supported on $\mathbb{R}_{>0}$ and satisfying $\mathcal{U}^{(j)}(x)\ll_j 1$. Define
\begin{equation}\label{mainI}
I(U,\alpha):=\int_{\mathbb{R}}\mathcal{U}(x)x^{iT_2}\mathcal{K}_{iT_1}(\alpha x)e(Ux)\,dx,
\end{equation}where $\mathcal{K}_{iT_1}$ is one of the Bessel function $\tilde{K}_{iT_1}$ or $J^{-}_{iT_1}$. Then there exists smooth weight functions $\mathcal{W}$ (depending on $\mathcal{U}$) compactly supported on $\mathbb{R}_{>0}$, such that
\begin{equation}\label{eq1121}
I(U,\alpha)\sim  \frac{e\left(\frac{-T_2}{2\pi}\right)}{T^{1/2}}\sum_{\epsilon_1,\epsilon_2=\pm 1}\,\,\sum_{R}I_{R}^{(\epsilon_1,\epsilon_2)}(U,\alpha),
\end{equation}where
\begin{equation}\label{Inepsilon}
I_{R}^{(\epsilon_1,\epsilon_2)}(U,\alpha)=\int_{\mathbb{R}}v^{-1}\omega_R(\epsilon_1\alpha,v) e\left(\frac{-T_2\log\phi_R(\epsilon_1\alpha,v)+\epsilon_2T_1\log v}{2\pi}\right)\,dv
\end{equation}where 
\begin{equation}\label{eq1122}
\phi_R(\alpha,v)=\frac{-2\pi}{T_2}\left(\frac{\alpha}{2\pi}\left(Rv\pm\frac{1}{Rv}\right)+U\right).
\end{equation}	and
\begin{equation}\label{11.24}
\omega_R(\alpha,v) =F(v-R^{-1})\mathcal{W}\left(\phi_R(\alpha,v)\right).
\end{equation}
The `$\pm$' in \eqref{eq1122} is $+$ for $\mathcal{K}_{iT_1}=\tilde{K}_{iT_1}$ and is $-$ for $\mathcal{K}_{iT_1}=J^{-}_{iT_1}$.\\
For $R\gg T^{\epsilon}\min\{\alpha^{-1}, T^{-1}\}$, $I_R^{(\epsilon_1,\epsilon_2)}$ is negligibly small unless
\begin{equation}\label{Rsize}
\begin{aligned}
(R+1)\alpha\asymp T,\,\,\,\text{if}\,\,\,\mathcal{K}_{iT_1}=\tilde{K}_{iT_1},\\
R\alpha\asymp T,\,\,\,\text{if}\,\,\,\mathcal{K}_{iT_1}=J^{-}_{iT_1}.
\end{aligned}
\end{equation}Let $z_1(\alpha),z_2(\alpha)$ be the the two roots of the corresponding phase function derivative
\begin{equation}
\frac{\partial}{\partial v}\left(-T_2\log(\phi_R(\alpha,v))+\epsilon_2T_1\log v\right)=0.
\end{equation}Suppose
\begin{equation}
z_1(\epsilon_1\alpha)-z_2(\epsilon_1\alpha)\asymp R^{-1}T^{-\delta/2},
\end{equation}for some $\delta\in\mathbb{R}$.
We have the following asymptotics for $I_R$  :
\begin{enumerate}
\item\label{asymptotic} If
\begin{equation}\label{alphasize}
R\gg T^{3\epsilon}\min\{\alpha^{-1}, T^{-1}\}
\end{equation}
and
\begin{equation}\label{eq11.27}
z_1(\epsilon_1\alpha)-z_2(\epsilon_1\alpha)\asymp R^{-1}T^{-\delta/2}\gg T^{2\epsilon}\left(\frac{R^3\alpha}{(R+1)^2}\right)^{-1/3}
\end{equation}then $I_{R}^{(\epsilon_1,\epsilon_2)}(U,\alpha)$ is negligibly small unless the roots $z_1(\epsilon_1\alpha),z_2(\epsilon_1\alpha)$ are real, in which case,
\begin{equation}\label{eq1123}
\begin{aligned}
&I_{R}^{(\epsilon_1,\epsilon_2)}(U,\alpha)\\
&\sim \frac{T^{\delta/4}}{|\alpha|^{1/2}}\sum_{k=1,2}\frac{\omega_R(\epsilon_1\alpha,z_k(\epsilon_1\alpha))}{(RT^{\delta/2}|z_1(\epsilon_1\alpha)-z_2(\epsilon_1\alpha)|)^{1/2}} \left(\phi_{R}(\epsilon_1\alpha,z_k(\epsilon_1\alpha))\right)^{-iT_2}\left(z_k(\epsilon_1\alpha)\right)^{i\epsilon_2 T_1}.
\end{aligned}
\end{equation}The weight function satisfy
\begin{equation}\label{eq1126}
|\alpha|^j\frac{d^j }{d\alpha^j}\frac{\omega_R(\epsilon_1\alpha,z_k(\epsilon_1\alpha))}{(RT^{\delta/2}|z_1(\epsilon_1\alpha)-z_2(\epsilon_1\alpha)|)^{1/2}} \ll_{j,R,\epsilon} A^j,\,\,j\geq 0,
\end{equation}where
\begin{equation}\notag
A:= (1+R^{-1}T^{\delta/2})(1+T^{\delta}).
\end{equation}
\item\label{thirdderbd}
If $R\gg T^{3\epsilon}\min\{\alpha^{-1}, T^{-1}\}$ and
\begin{equation}
z_1(\epsilon_1\alpha)-z_2(\epsilon_1\alpha)\asymp R^{-1}T^{-\delta/2}\ll T^{2\epsilon}\left(\frac{R^3\alpha}{(R+1)^2}\right)^{-1/3},
\end{equation}then
\begin{equation}\label{bd2}
I_{R}^{(\epsilon_1,\epsilon_2)}(U,\alpha)\ll ((R+1)\alpha)^{-1/3}.
\end{equation}
\item\label{negsmall} If $T^{\epsilon}\min\{\alpha^{-1}, T^{-1}\}\ll R\ll T^{-\epsilon}$, then $I_{R}^{(\epsilon_1,\epsilon_2)}(U,\alpha)$ is negligibly small unless
\begin{equation}
\begin{aligned}
\left|\left(\frac{\pi U}{\alpha}\right)\left(\frac{T_1}{T_2}\right)-1\right|\ll T^{\epsilon}(R+(R\alpha))^{-1}),\,\,\text{if}\,\,\mathcal{K}_{iT_1}=\tilde{K}_{iT_1},\\
\left|\left(\frac{\pi U}{\alpha}\right)-1\right|\ll T^{\epsilon}(R+(R\alpha))^{-1}),\,\,\,\text{if}\,\,\,\mathcal{K}_{iT_1}=J^{-}_{iT_1}.
\end{aligned}
\end{equation}in which case
\begin{equation}\label{bd3}
I_{R}^{(\epsilon_1,\epsilon_2)}(U,\alpha)\ll \alpha^{-1/2}.
\end{equation}
\end{enumerate}
\end{lemma}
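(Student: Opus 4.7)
The plan is to reduce the analysis of $I(U,\alpha)$ to the one-variable integrals $I_R^{(\epsilon_1,\epsilon_2)}$ by substituting an oscillatory integral representation of $\mathcal{K}_{iT_1}(\alpha x)$ and then carrying out a single stationary-phase analysis in the $x$-variable. I would begin from the standard representation
$$\tilde K_{iT_1}(y)=\tfrac12\int_0^\infty e^{-y(t+t^{-1})/2}\,t^{iT_1}\,\frac{dt}{t}$$
(and the Schl\"afli integral for $J^-_{iT_1}$, which contributes $t-t^{-1}$ in place of $t+t^{-1}$, accounting for the sign $\pm$ in \eqref{eq1122}), and rotate the $t$-contour onto a purely imaginary line so that the Gaussian factor becomes oscillatory; this rotation is the source of both the two signs $\epsilon_1=\pm1$ (from the upper/lower halves of the rotated contour) and the overall scalar $e(-T_2/(2\pi))$ appearing in \eqref{eq1121}. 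The sign $\epsilon_2=\pm1$ arises by writing $t^{iT_1}+t^{-iT_1}=2\cos(T_1\log t)$ and using $K_{iT_1}=K_{-iT_1}$. Inserting the smooth partition $\sum_RF(t/R)=1$ and setting $t=Rv$ localizes $v$ near $1+R^{-1}$ and produces the decomposition
$$\mathcal{K}_{iT_1}(\alpha x)\sim e(-T_2/(2\pi))\sum_{R,\epsilon_1,\epsilon_2}\int F(v-R^{-1})\,e\!\left(\tfrac{\alpha x\,\epsilon_1(Rv\pm(Rv)^{-1})}{2\pi}+\tfrac{\epsilon_2 T_1\log v}{2\pi}\right)\,\frac{dv}{v}.$$

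Substituting into \eqref{mainI} and interchanging integrals, the $x$-phase $\Phi_x(x)=(T_2\log x)/(2\pi)+Ux+\epsilon_1\alpha x(Rv\pm(Rv)^{-1})/(2\pi)$ has a unique stationary point $x_0=1/\phi_R(\epsilon_1\alpha,v)$, lying in $\mathrm{supp}\,\mathcal U$ precisely when $\phi_R(\epsilon_1\alpha,v)$ is in a fixed positive range (producing the cutoff $\mathcal W(\phi_R)$ inside $\omega_R$). A direct calculation gives $\Phi_x(x_0)=-\tfrac{T_2}{2\pi}(\log\phi_R+1)$ and $\Phi_x''(x_0)=-T_2\phi_R^2/(2\pi)$; one application of Lemma~\ref{s2} then yields $T^{-1/2}|\phi_R|^{-1}$ and, after combining with the outer $e(-T_2/(2\pi))$, gives \eqref{eq1121} with $\omega_R$ as in \eqref{11.24}. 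For part \ref{asymptotic}, I would apply Lemma~\ref{s2} again to the resulting $v$-integral: the equation $\frac{d}{dv}(-T_2\log\phi_R+\epsilon_2T_1\log v)=0$ reduces to the quadratic $\alpha(1-\epsilon_2T_1/T_2)w^2-(2\pi U\epsilon_2T_1/T_2)w\mp\alpha(1\pm\epsilon_2T_1/T_2)=0$ in $w=Rv$, with the two roots $z_1,z_2$; the second derivative of the phase at $z_k$ is proportional to $|z_1-z_2|$, and condition \eqref{eq11.27} is exactly what is needed to meet the hypothesis of Lemma~\ref{s2}. The derivative bound \eqref{eq1126} follows by differentiating the implicit quadratic for $z_k(\alpha)$ and applying Fa\`a di Bruno (Lemma~\ref{bruno}). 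Part \ref{thirdderbd} corresponds to coalescence of $z_1$ and $z_2$, where Lemma~\ref{s2} fails; I would instead invoke the classical Van der Corput third-derivative bound (a specialization of Lemma~\ref{s3} with $d=3$) to extract the confluence saving $((R+1)\alpha)^{-1/3}$. Part \ref{negsmall}, the regime of small $R$, admits no stationary point in $v$: Lemma~\ref{s1} gives a negligible contribution unless the phase derivative is tiny at some $v$, which translates into the stated closeness condition on $\pi UT_1/(\alpha T_2)$; in that residual range, trivial estimation using the support of $F$ and the size $|\phi_R|^{-1}\asymp\alpha^{-1/2}$ yields \eqref{bd3}.

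The main obstacle will be the contour-rotation step: one must justify that the deformation of the $t$-contour picks up no residues and that the endpoint contributions vanish, and simultaneously track the exact phases and signs so that the pairings $(\epsilon_1,\epsilon_2)\in\{\pm1\}^2$ and the overall scalar $e(-T_2/(2\pi))$ emerge correctly. A secondary technical point is establishing \eqref{eq1126} uniformly in the dyadic parameter $R$ and with explicit dependence on the separation $|z_1-z_2|\asymp R^{-1}T^{-\delta/2}$ appearing in the denominator; because $z_k$ is only implicit, each $\alpha$-derivative produces an escalating chain of implicit-function computations which must be combined through Lemma~\ref{bruno} with careful bookkeeping of the factor $A=(1+R^{-1}T^{\delta/2})(1+T^\delta)$, so that the bound is sharp enough for the downstream application in Theorem~\ref{average}.
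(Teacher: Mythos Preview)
Your overall strategy matches the paper's: integral representation of the Bessel function, dyadic partition in the new variable, stationary phase in $x$, then stationary-phase/non-stationary-phase analysis in $v$. However, several points of your plan diverge from what the paper actually does, and a couple are genuine mis-attributions.

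\textbf{The starting representation.} The paper does not start from the exponential-decay representation $\int_0^\infty e^{-y(t+t^{-1})/2}t^{iT_1}\,dt/t$ and rotate the contour. It uses directly the oscillatory formula
\[
\tilde K_{iT_1}(\alpha x)=\tfrac12\sum_{\pm}\int_{-\infty}^{\infty}e\!\left(\frac{\pm 2\alpha x\sinh u+T_1 u}{2\pi}\right)du,
\]
then substitutes $e^u\mapsto v$ and splits $u\gtrless0$. Thus $\epsilon_1$ is the outer $\pm$ and $\epsilon_2$ comes from the two halves of the $u$-line; no contour rotation is needed and your ``main obstacle'' evaporates. Relatedly, the scalar $e(-T_2/(2\pi))$ in \eqref{eq1121} does \emph{not} come from any rotation: it arises solely from the $x$-stationary phase via $\Phi_x(x_0)=-\tfrac{T_2}{2\pi}(\log\phi_R+1)$, exactly as you compute later. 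Your attribution of this scalar to the rotation step is inconsistent with your own calculation.

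\textbf{Part \ref{thirdderbd}.} The paper does not apply a Van der Corput third-derivative bound. It localizes $v$ dyadically around $\Re z_1$, shows via Lemma~\ref{s1} that each piece with $|v-\Re z_1|\gg T^{2\epsilon}A_0^{-1/3}$ (where $A_0=R^3\alpha/(R+1)^2$) is negligible, and then estimates the remaining window trivially, using that $v^{-1}\omega_R\ll R/(R+1)$ and the window has length $\ll A_0^{-1/3}$, giving $((R+1)\alpha)^{-1/3}$. A direct third-derivative bound over the full support would lose a factor when $R\ll1$ because the support has length $\asymp 1+R^{-1}$; the localization is what recovers the stated bound.

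\textbf{Part \ref{negsmall}.} Your claim that ``trivial estimation using $|\phi_R|^{-1}\asymp\alpha^{-1/2}$'' gives \eqref{bd3} is incorrect: on $\mathrm{supp}\,\mathcal W$ one has $\phi_R\asymp1$. The paper instead shows $\psi''(v)\gg R^2\alpha$ in the residual range and applies the second-derivative bound, yielding $R\cdot(R^2\alpha)^{-1/2}=\alpha^{-1/2}$ (the leading $R$ coming from $\sup|v^{-1}\omega_R|$).

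In summary, your skeleton is correct and close to the paper's, but the paper sidesteps your contour-rotation difficulty entirely, and your proposed mechanisms for parts \ref{thirdderbd} and \ref{negsmall} need to be replaced by localization-plus-trivial and a second-derivative bound, respectively.
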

\begin{remark}
The $``\sim"$ symbol in \eqref{eq1123} means equal upto lower order terms. The lower order terms can be calculated explicitly by following the proof and can be handled similarly as the main term. These have smaller contributions and their details are suppressed for the sake of simplicity.
\end{remark}
\begin{proof}
Let us begin with the proof of $\eqref{eq1121}$ for $\mathcal{K}_{iT_1}=\tilde{K}_{iT_1}$. The modifications required for $J^{-}_{iT_1}$ will be pointed out at appropriate places in the course of the proof below. Consider the integral representation
\begin{equation}\notag
\begin{aligned}
\tilde{K}_{iT_1}(\alpha x)&=\frac{1}{2}\sum_{\pm}\int_{-\infty}^{\infty}e\left(\frac{\pm 2\alpha x\sinh u+T_1u}{2\pi}\right)du\\
&=\frac{1}{2}\sum_{\epsilon_1,\epsilon_2\in\{\pm 1\}^2}\int_{0}^{\infty}e\left(\frac{2\epsilon_1\alpha x\sinh u+\epsilon_2T_1u}{2\pi}\right)du,
\end{aligned}
\end{equation}for $\alpha, x>0$. Changing variable $e^u\mapsto v$, we obtain
\begin{equation}\notag
\tilde{K}_{iT_1}(\alpha x)=\frac{1}{2}\sum_{\epsilon_1,\epsilon_2\in\{\pm 1\}^2}\int_{1}^{\infty}e\left(\frac{\epsilon_1\alpha x(v-v^{-1})+\epsilon_2T_1\log v}{2\pi}\right)\frac{dv}{v}.
\end{equation} Inserting the dyadic partition $U((v-1)/R), R>0$ of the interval $(1,\infty)$ into the last integral we obtain
\begin{equation}\notag
\tilde{K}_{iT_1}(\alpha x)=\frac{1}{2}\sum_{\epsilon_1,\epsilon_2\in\{\pm 1\}^2}\sum_{R>0}\int_{\mathbb{R}}U((v-1)/R) e\left(\frac{\epsilon_1\alpha x(v-v^{-1})+\epsilon_2T_1\log v}{2\pi}\right)\frac{dv}{v}.
\end{equation}Changing variable $v\mapsto Rv$ we obtain
\begin{equation}\label{besselsim}
\begin{aligned}
&\tilde{K}_{iT_1}(\alpha x)\\
&=\frac{1}{2}\sum_{\epsilon_1,\epsilon_2\in\{\pm 1\}^2}\sum_{R>0}e(\epsilon_2T_1\log(R))\int_{\mathbb{R}}U(v-R^{-1})e\left(\frac{\epsilon_1\alpha x (Rv-R^{-1}v^{-1})+\epsilon_2T_1\log v}{2\pi}\right)\frac{dv}{v}.
\end{aligned}
\end{equation}Note that $U(v-1/R)$  has support inside $[1+R^{-1},2+R^{-1}]$ and satisfies  $\frac{\partial^j}{\partial v^j}U(v-R^{-1})\ll_j 1$. Note that the phase function in \eqref{besselsim} has the first derivative
\begin{equation}\label{fd}
(2\pi)^{-1}\left(R\epsilon_1\alpha x(1+(Rv)^{-2})+\epsilon_2T_1v^{-1}\right)
\end{equation}For $v\in \text{supp}\,U(v-R^{-1})$ we have 
\begin{equation}\notag
R\epsilon_1\alpha x(1+(Rv)^{-2})\asymp R\alpha\,\,\,\,\text{and}\,\,\,\,\epsilon_2T_1v^{-1}\asymp T(1+R^{-1})^{-1}.
\end{equation}So if we assume $RT\gg T^{\epsilon}$, it follows that the integral \eqref{besselsim} is negligibly small unless
\begin{equation}\label{nsize}
(R+1)\alpha\asymp T.
\end{equation}This completes the proof of the claim \eqref{Rsize}.
\begin{remark}
In the case $\mathcal{K}_{iT_1}=J^{-}_{iT_1}$, \eqref{fd} becomes $\left(R\epsilon_1\alpha x(1-(Rv)^{-2})+\epsilon_2T_1v^{-1}\right)$. So we must have $ R^2\alpha/(R+1)\asymp R\epsilon_1\alpha x(1-(Rv)^{-2})\asymp  \epsilon_2T_1v^{-1}\asymp RT/(R+1)$, that is, $R\alpha\asymp T$, for non-negligible contribution.
\end{remark}
Substituting \eqref{besselsim} in \eqref{mainI}, we obtain
\begin{equation}\label{xint}
\begin{aligned}
I(U, \alpha)=\frac{1}{2}\sum_{\epsilon_1,\epsilon_2\in\{\pm 1\}^2}&\sum_{Rw}e(\epsilon_2T_1\log(R))\int_{\mathbb{R}}U(v-R^{-1})e\left(\frac{\epsilon_2T_1\log v}{2\pi}\right)\\
&\,\,\,\,\,\,\,\,\int_{\mathbb{R}}\mathcal{U}(x)x^{iT_2}e\left(\frac{\epsilon_1\alpha (Rv-R^{-1}v^{-1})x}{2\pi}+Ux\right)\,dx\,\,\frac{dv}{v}.
\end{aligned}
\end{equation} Let 
\begin{equation}\label{defphiR}
\phi_R(\alpha,v):=-2\pi T_2^{-1}\left(\frac{\alpha(Rv-R^{-1}v^{-1})}{2\pi}+U\right),
\end{equation}then by a simple stationary phase analysis, the last $x$-integral is essentially
\begin{equation}\notag
\begin{aligned}
&\int_{\mathbb{R}}\mathcal{U}(x)x^{iT_2}e\left(-\frac{T_2}{2\pi}\cdot\phi_R(\epsilon_1\alpha,v)x\right)\,dx\\
&\sim\frac{(2\pi)^{1/2}}{|T_2|^{1/2}}\cdot (\phi_R(\epsilon_1\alpha,v))^{-1}\mathcal{U}((\phi_R(\epsilon_1\alpha,v))^{-1})\cdot e\left(\frac{-T_2\log(\phi_R(\epsilon_1\alpha,v))-T_2}{2\pi}\right).
\end{aligned}
\end{equation}Substituting the last approximation of the $x$ integral into \eqref{xint}, we obtain
\begin{equation}\notag
 I(U,\alpha)\sim  \frac{e\left(\frac{-T_2}{2\pi}\right)}{T^{1/2}}\sum_{\epsilon_1,\epsilon_2=\pm 1}\sum_{R}e(\epsilon_2T_1\log(R))I_{R}^{(\epsilon_1,\epsilon_2)}(U,\alpha),
\end{equation}where
\begin{equation}\notag
I_{R}^{(\epsilon_1,\epsilon_2)}(U,\alpha)=\int_{\mathbb{R}}v^{-1}\omega_R(\epsilon_1\alpha,v) e\left(\frac{-T_2\log\phi_R(\epsilon_1\alpha,v)+\epsilon_2T_1\log v}{2\pi}\right)\,dv,
\end{equation}where
\begin{equation}\label{defomegaR}
\omega_R(\alpha,v)=U(v-R^{-1})\mathcal{W}\left(\phi_R(\alpha,v)\right),\,\,\,\,\mathcal{W}(y)=\frac{2\pi T^{1/2}}{|T_2|^{1/2}}\cdot y^{-1}\mathcal{U}(y^{-1}).
\end{equation}To complete the proof of part \eqref{eq1121}, it remains to prove $U\ll T$. Note that from the definition \eqref{defphiR}
\begin{equation}\notag
U/T\ll |\phi_r(\alpha,v)|+R\alpha/T\ll |\phi_r(\alpha,v)|+1\ll 1,
\end{equation}for $\phi_r(\alpha,v)\in \text{supp}\,\mathcal{W}$. This completes the proof upto \eqref{eq1121}
\\

Let us proceed for the proof of \ref{asymptotic} \eqref{eq1123}. We have
\begin{equation*}
\begin{aligned}
&\frac{\partial}{\partial v}\left(-T_2\log(\phi_R(\epsilon_1\alpha,v))+\epsilon_2T_1\log v\right)\\
&=-\frac{T_2\epsilon_1\alpha\left(v^2+R^{-2}\right)}{v(\epsilon_1\alpha\left(v^2-R^{-2}\right)+R^{-1}2\pi U v)}+\frac{\epsilon_2T_1}{v}\\
&= \frac{\epsilon_1\alpha(\epsilon_2T_1-T_2)v^2+R^{-1}(2\pi U)(\epsilon_2T_1)v-\epsilon_1\alpha(u)R^{-2}(\epsilon_2T_1+T_1)}{v(\epsilon_1\alpha\left(v^2-R^{-2}\right)+R^{-1}2\pi U v)}
\end{aligned}
\end{equation*}Hence $z_1(\epsilon_1\alpha)$ and $ z_2(\epsilon_1\alpha)$ are the roots of the quadratic equation
\begin{equation}\label{quadratic}
v^2+R^{-1}\left(\frac{\pi U}{\epsilon_1\alpha}\right)\left(\frac{2\epsilon_2T_1}{(\epsilon_2T_1-T_2)}\right)v-R^{-2}\left(\frac{\epsilon_2T_1+T_2}{\epsilon_2T_1-T_2}\right),
\end{equation}and we have
\begin{equation}\label{psider}
\begin{aligned}
\frac{\partial}{\partial v}\left(-T_2\log(\phi_R(\epsilon_1\alpha,v))+\epsilon_2T_1\log v\right)&=\frac{R\alpha(\epsilon_2T_1-T_2)(v-z_1(\epsilon_1\alpha))(v-z_2(\epsilon_1\alpha))}{T_2v^2\phi_R(\epsilon_1\alpha,v))}
\end{aligned}
\end{equation}Denote
\begin{equation}\notag
A_0:=R^3\alpha/(R+1)^2\gg T^{\epsilon}.
\end{equation}Note that by our assumption \eqref{eq11.27}
\begin{equation}\label{rootgap}
z_1-z_2\gg T^{2\epsilon}A_0^{-1/3}.
\end{equation}
Let $\psi(v):=-T_2\log(\phi_R(\epsilon_1\alpha,v))+\epsilon_2T_1\log v$. Suppose the roots $z_1,z_2$ are not real. Let $w$ be a smooth function such that $w(t)=1, t\in [-1/2,1/2]$ and $\text{supp}(w)\subseteq [-1,1]$. Denote $A:=|z_1(\epsilon_1\alpha)-z_2(\epsilon_1\alpha)|\gg T^{2\epsilon }A_0^{-1/3} $. We can write
\begin{equation}\label{imaginary}
I_{R}^{(\epsilon_1,\epsilon_2)}(U,\alpha)=\int_{\mathbb{R}}G_1(v)e(\psi(v)/2\pi)\,dv+\int_{\mathbb{R}}G_2(v)e(\psi(v)/2\pi)\,dv
\end{equation}where $$G_1(v):=w(A^{-1}(v-\Re(z_1)))v^{-1}\omega^{(\epsilon_1,\epsilon_2)}_R(\alpha,v),\,\,\,\,G_2(v)=(1-w(A^{-1}(v-\Re(z_1))))v^{-1}\omega^{(\epsilon_1,\epsilon_2)}_R(\alpha,v).$$ Consider the integral with $G_1$. For $v$ in support of $G_1$, from \eqref{psider} we obtain
\begin{equation}\notag
\psi'(v)\gg A_0\left(|v-\Re(z_1)|^2+|z_1-z_2|^2\right)\gg A_0A^2.
\end{equation} Furthermore, using the simple inequalities
\begin{equation}\label{phider}
\frac{\partial^j }{\partial v^j}\phi_R(\alpha, v)\ll_j \frac{\alpha}{T}\Bigg(\frac{R^j}{(R+1)^{j-1}}\Bigg)\asymp (R/(R+1))^j,\,j\geq 1,\,\,\frac{\partial^j }{\partial v^j}(1/v^2)\ll (R/(R+1))^{j+2},\,j\geq 0
\end{equation} it is easy to show that for $v$ in support of $\omega^{(\epsilon_1,\epsilon_2)}_R(\alpha,v)$,
\begin{equation}\label{denoder}
\frac{\partial ^j}{\partial v^j}(1/v^2\phi_R(v,\alpha))\ll _j (R/(R+1))^{j+2},\,j\geq 0,
\end{equation}so that using the above and \eqref{psider}, we obtain
\begin{equation}\notag
\psi''(v)\ll A_0(A+A^2),\,\,\psi^{(j)}(v)\ll_j A_0(1+A+A^2),\,j\geq 3.
\end{equation}On the other hand, using \eqref{phider} and \eqref{defomegaR}, we obtain that
 \begin{equation}\notag
G_1^{(j)}(v)\ll_{j} (1+A^{-1})^j.
 \end{equation}Applying Lemma \ref{s1} with the parameters $(X,U^{-1})=(1, 1+A^{-1})$, $W=A_0A^2, Y=\min\{1,A\}^2A_0(A+A^2), Q=\min\{1,A\}$, we obtain
 \begin{equation}\label{G1}
I_{n}^{(\epsilon_1,\epsilon_2)}(U,\alpha,\delta)\ll_{A} \left(\frac{A_0 A^2}{(A_0(A+A^2))^{1/2}}\right)^{-A}+\left(\frac{A_0 A^2}{1+A^{-1}}\right)^{-A}.
 \end{equation}Using the fact that $A\gg T^{2\epsilon}A_0^{-1/3}$ and $A_0\gg T^{\epsilon}$, it can be easily verified that each term inside the parenthesis above is $\gg T^{\epsilon}$. Hence we conclude that the first integral in \eqref{imaginary} is negligibly small. For the second integral in \eqref{imaginary}, we further introduce dyadic partition and insert localising factors $F(T^{\delta}(v-\Re(z))$, where $T^{-\delta}\gg A$. Each dyadic part is of the form
 \begin{equation}\label{G2dyadic}
\int_{\mathbb{R}}F(T^{\delta}(v-\Re(z))G_2(v)e(\psi(v)/2\pi)\,dv. 
 \end{equation}Then for $v$ in the support of $F(T^{\delta}(v-\Re(z))G_2(v)$, we have
 \begin{equation}\notag
\psi'(v)\gg  A_0\left(|v-\Re(z_1)|^2+|z_1-z_2|^2\right)\gg A_0 T^{-2\delta},
 \end{equation}and arguing as earlier we have
 \begin{equation}\notag
\psi''(v)\ll A_0(T^{-\delta}+T^{-2\delta}),\,\,\psi^{(j)}(v)\ll_j A_0(1+T^{-\delta}+T^{-2\delta}),\,j\geq 3.
 \end{equation}Now arguing similarly as in \eqref{G1} with $T^{-\delta}$ in place of $A$, and using the fact that $T^{-\delta}\gg A$, we can conclude \eqref{G2dyadic} is negligibly small.\\
 
Hence, for rest of the calculation we assume that the roots $z_1,z_2$ are real. 
Fix a smooth function $w$ such that $w(t)=1, t\in [-1/2,1/2]$ and $\text{supp}(w)\subseteq [-1,1]$. We divide the range of integration of  $I_{n}^{(\epsilon_1,\epsilon_2)}(U,\alpha)$ into two pieces (localising around the two roots)
\begin{equation}\notag
I_{R}^{(\epsilon_1,\epsilon_2)}(U,\alpha)=\int_{v\in\mathbb{R}}v^{-1}\omega^{(\epsilon_1,\epsilon_2)}_R(\alpha,v)e(\psi(v)/2\pi)\,dv=I_1+I_2,
\end{equation}where
\begin{equation}\label{I1def}
I_1:=\int_{\mathbb{R}}(1-w(T^{-\epsilon}A_0^{1/3}(v-z_1)))v^{-1}\omega^{(\epsilon_1,\epsilon_2)}_R(\alpha,v)e(\psi(v)/2\pi) \,dv,
\end{equation}and
\begin{equation}\notag
I_2:=\int_{\mathbb{R}}w(T^{-\epsilon}A_0^{1/3}(v-z_1))v^{-1}\omega^{(\epsilon_1,\epsilon_2)}_R(\alpha,v)e(\psi(v)/2\pi)\,dv.
\end{equation}Consider the $I_1$ part first. Note that $v\in \text{supp}(1-w(T^{-\epsilon}A_0^{1/3}(v-z_1))) $ implies $v-z_1\gg T^{\epsilon}A_0^{-1/3}$. We fix the size of $v-z_1$ by introducing dyadic partition of unity and inserting localising factors $F(T^{\delta}(v-z_1))$, where
\begin{equation}
 T^{\epsilon}A_0^{-1/3}\ll T^{-\delta}
\end{equation}Note that since $\text{supp}\,\omega^{(\epsilon_1,\epsilon_2)}_R(\alpha,v)\subseteq [1+R^{-1}, 1+2R^{-1}]$ and $|z_1|\ll 1+R^{-1}$ which follows from \eqref{quadratic}, we can also assume
\begin{equation}\label{deltasize}
T^{\epsilon}A_0^{-1/3}\ll T^{-\delta}\ll 1+R^{-1} .
\end{equation}
 Each dyadic part is of the form
\begin{equation}\notag
\begin{aligned}
I_1(\delta):=\int_{v\in\mathbb{R}}F(T^{\delta}(v-z_1))&(1-w(T^{-\epsilon}A_0^{1/3}(v-z_1)))v^{-1}\omega^{(\epsilon_1,\epsilon_2)}_R(\alpha,v) \\
&e\left(\frac{-T_2\log\phi_R(\epsilon_1\alpha,v)+\epsilon_2T_1\log v}{2\pi}\right)\,dv.
\end{aligned}
\end{equation}
 Changing variable $T^{\delta}(v-z_1)\mapsto v_1$, we obtain
\begin{equation}\label{I1dyadic}
I_1(\delta)=T^{-\delta}\int_{v_1}G(v_1)e(\phi(v_1)/2\pi)\,dv_1,
\end{equation}where 
\begin{equation}\notag
G(v_1)=F(v_1)(1-w(T^{-\epsilon}A_0^{1/3}T^{-\delta}v_1))(T^{-\delta}v_1+z_1)^{-1}\omega^{(\epsilon_1,\epsilon_2)}_R(\alpha, T^{-\delta}v_1+z_1) 
\end{equation}and $\phi(v_1):=\psi(T^{-\delta}v_1+z_1)$ which using \eqref{psider} satisfy
\begin{equation}\label{I1der}
\begin{aligned}
\phi'(v_1)=\frac{R\alpha(\epsilon_2T_1-T_2)T^{-3\delta}v_1(v_1+T^{\delta}(z_1-z_2))}{T_2(T^{-\delta}v_1+z_1)^2\phi_n(\epsilon_1\alpha, T^{-\delta}v_1+z_1)}
\end{aligned}
\end{equation}
\begin{lemma}\label{•}
$I_1(\delta)$ is negligibly small unless $T^{\delta}(z_1-z_2)\ll 1$ for some large enough implied constant.
\end{lemma}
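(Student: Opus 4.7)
The plan is to deduce the conclusion from Lemma \ref{s1} applied to the rescaled integral \eqref{I1dyadic}. Set $Z := T^{\delta}|z_1 - z_2|$ and suppose that $Z \gg 1$ with a sufficiently large implied constant; I aim to show $I_1(\delta)$ is then negligibly small. By \eqref{deltasize}, one has $T^{\epsilon}A_0^{-1/3}T^{\delta} \ll 1$, so the cutoff $1 - w(T^{-\epsilon}A_0^{1/3}T^{-\delta}v_1)$ is identically one on the support $|v_1| \asymp 1$ of $F$ in \eqref{I1dyadic} and may be dropped. Each $d/dv_1$ applied to $\omega_R(\epsilon_1\alpha, T^{-\delta}v_1 + z_1)$ contributes a factor of size $T^{-\delta}(R/(R+1)) \ll 1$ (using $T^{-\delta} \ll 1 + R^{-1}$ coming from the supports), so the smooth weight $G(v_1)$ satisfies $|G^{(j)}(v_1)| \ll_j 1$.

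For the phase, I would use \eqref{I1der} together with $R\alpha \asymp A_0(1+R^{-1})^2$, $(T^{-\delta}v_1 + z_1)^2 \asymp (1+R^{-1})^2$, $\phi_R \asymp 1$ on the support, and $(\epsilon_2 T_1 - T_2)/T_2 \asymp 1$ (generic position); the two factors of $(1+R^{-1})^2$ cancel and yield
\[
\phi'(v_1) \asymp A_0 T^{-3\delta} v_1 \bigl(v_1 + T^{\delta}(z_1 - z_2)\bigr).
\]
Since $|v_1| \asymp 1$ while $Z \gg 1$, we have $|v_1 + T^{\delta}(z_1 - z_2)| \asymp Z$ and hence $|\phi'(v_1)| \gg A_0 T^{-3\delta} Z$. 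For the higher derivatives, I would write $\psi'(v) = A_0(v - z_1)(v - z_2) h(v)$ with $h(v) := (\epsilon_2 T_1 - T_2)/(T_2 v^2 \phi_R(\epsilon_1\alpha, v))$, use the chain rule $\phi^{(k)}(v_1) = T^{-k\delta}\psi^{(k)}(T^{-\delta}v_1 + z_1)$ and Leibniz. Since $(v - z_1)(v - z_2)$ is quadratic in $v$, only its first three derivatives contribute; combining this with $h^{(j)}(v) \ll (R/(R+1))^{j+2}$ from \eqref{denoder} and the uniform control $T^{-\delta}(R/(R+1)) \ll 1$, a short calculation should yield the uniform bound
\[
|\phi^{(k)}(v_1)| \ll_k A_0 T^{-3\delta} Z, \qquad k \geq 2.
\]

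With these estimates, Lemma \ref{s1} applied with $X = 1$, $U = 1$, $R_\star := A_0 T^{-3\delta} Z$, $Y := A_0 T^{-3\delta} Z$, $Q := 1$ gives
\[
\frac{Q R_\star}{\sqrt{Y}} = \sqrt{A_0 T^{-3\delta} Z} \gg T^{3\epsilon/2}, \qquad R_\star U = A_0 T^{-3\delta} Z \gg T^{3\epsilon},
\]
where both bounds use $A_0 T^{-3\delta} \gg T^{3\epsilon}$ (from \eqref{deltasize}) together with $Z \gg 1$; in particular both exceed $T^{\epsilon}$, so Lemma \ref{s1} yields $I_1(\delta) \ll_A T^{-A}$ for every $A \geq 1$, as required. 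The only technically delicate point is the uniform higher-derivative estimate: in the regime $R \ll 1$ the scale $T^{-\delta}$ may itself be large, and one needs the cancellation $T^{-\delta}(R/(R+1)) \ll 1$ to prevent the higher derivatives from degrading. Verifying this across all terms in the Leibniz expansion (rather than simply bounding term by term) is the main book-keeping task.
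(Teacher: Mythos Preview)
Your approach is essentially the paper's, but the estimate $G^{(j)}(v_1)\ll_j 1$ is incorrect, and this is not merely cosmetic. The weight $\omega_R(\epsilon_1\alpha,v)=U(v-R^{-1})\mathcal{W}(\phi_R(\epsilon_1\alpha,v))$ contains the factor $U(v-R^{-1})$, whose $v$-derivatives are of size $\asymp 1$, not $R/(R+1)$; only the $\mathcal{W}(\phi_R)$ piece differentiates to size $R/(R+1)$ via \eqref{phider}. Hence $\partial_v\omega_R\ll 1$ and, after the chain rule, $\tfrac{d}{dv_1}\omega_R(\epsilon_1\alpha,T^{-\delta}v_1+z_1)\ll T^{-\delta}$, which can be large when $R\ll 1$ (indeed $T^{-\delta}$ ranges up to $\asymp R^{-1}$ by \eqref{deltasize}). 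The same remark applies to the cutoff $1-w(T^{-\epsilon}A_0^{1/3}T^{-\delta}v_1)$: the inequality $T^{-\epsilon}A_0^{1/3}T^{-\delta}\gg 1$ from \eqref{deltasize} holds only up to an implied constant, so on the boundary dyadic pieces this factor need not be identically $1$ and its derivatives contribute $T^{-\epsilon}A_0^{1/3}T^{-\delta}$. This is exactly why the paper records
\[
G^{(j)}(v_1)\ll_j\bigl(1+T^{-\delta}+A_0^{1/3}T^{-\delta}\bigr)^{j}.
\]

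The fix is easy and brings you back to the paper's proof: take $U^{-1}=1+T^{-\delta}+A_0^{1/3}T^{-\delta}\asymp A_0^{1/3}T^{-\delta}$ in Lemma~\ref{s1}. Then with your $R_\star=Y=A_0T^{-3\delta}Z$, $Q=1$, one has $QR_\star/\sqrt{Y}=(A_0T^{-3\delta}Z)^{1/2}\gg T^{3\epsilon/2}$ as you wrote, and
\[
R_\star U\asymp \frac{A_0T^{-3\delta}Z}{A_0^{1/3}T^{-\delta}}=(A_0^{1/3}T^{-\delta})^{2}Z\gg T^{2\epsilon},
\]
again using $A_0^{1/3}T^{-\delta}\gg T^{\epsilon}$ from \eqref{deltasize}. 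So the conclusion survives, but only with the correct derivative bound for $G$; your parameter choice $U=1$ would fail in the small-$R$ regime.
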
	
\begin{proof}
Suppose $T^{\delta}(z_1-z_2)\gg 1$ for some large enough implied constant. Then from \eqref{I1der}, for $v_1$ in the support of $G(v_1)$,
\begin{equation}\notag
\phi'(v_1)\gg A_0 T^{-3\delta}(T^{\delta}(z_1-z_2)),
\end{equation}
\begin{equation}\notag
\phi^{(j)}(v_1)\ll_j A_0 T^{-3\delta}(T^{\delta}(z_1-z_2)),\,j\geq 2.
\end{equation}where we have used \eqref{denoder} and \eqref{deltasize}. Furthermore,
\begin{equation}\notag
G^{(j)}(v_1)\ll_j  (1+T^{-\delta}+A_0^{1/3}T^{-\delta})^{j}.
\end{equation}Applying Lemma \ref{s1} with $(X, U^{-1})= (1, 1+T^{-\delta}+T^{-\delta}A_0^{1/3}), (Y,Q^{-1})=(A_0T^{-3\delta}(T^{\delta}(z_1-z_2)), 1)$ and $W=A_0T^{-3\delta}(T^{\delta}(z_1-z_2))$, we obtain
\begin{equation}\label{rootbded}
\begin{aligned}
I_1(\delta)&\ll_A \left(\frac{A_0 T^{-3\delta}(T^{\delta}(z_1-z_2))}{(A_0 T^{-3\delta}(T^{\delta}(z_1-z_2)))^{1/2}}\right)^{-A}+\left(\frac{A_0 T^{-3\delta}(T^{\delta}(z_1-z_2))}{1+T^{-\delta}+T^{-\delta}A_0^{1/3}}\right)^{-A}\\
&\ll (A_0T^{-3\delta})^{-A/2}+\left(\frac{A_0 T^{-3\delta}}{1+T^{-\delta}+T^{-\delta}A_0^{1/3}}\right)^{-A}
\end{aligned}
\end{equation}Using $T^{-\delta}\gg T^{\epsilon}A_0^{-1/3}$ and $A_0\gg T^{\epsilon}$, it can be easily shown that the term inside each parenthesis above is $\gg T^{\epsilon}$. The lemma follows.
\end{proof}

The next lemma localises the integral $I_1(\delta)$ around the root $-T^{\delta}(z_1-z_2)$ of its phase function.
\begin{lemma}\label{I1local}
Suppose $w$ is a smooth function such that $w(t)=1, t\in [-1/2,1/2]$ and $\text{supp}(w)\subseteq [-1,1]$. Let $A:= A_0^{1/2}T^{-3\delta/2-\epsilon} (\gg T^{\epsilon/2})$ and
\begin{equation}\notag
H(v_1):=(1-w(A(v_1+T^{\delta}(z_1-z_2))))G(v_1).
\end{equation}Then
\begin{equation}\notag
I:=\int_{v_1}H(v_1)e(\phi(v_1)/2\pi)\,dv_1\ll_{A} T^{-A}.
\end{equation}
\end{lemma}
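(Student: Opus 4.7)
The plan is to apply Lemma \ref{s1} on $\text{supp}(H)$, exploiting the fact that removing a neighborhood of the root $v_1 = -T^{\delta}(z_1 - z_2)$ of size $A^{-1}$ produces a lower bound on $|\phi'|$ large enough to render the integral negligible by repeated integration by parts. By the preceding lemma we may restrict to the regime $T^{\delta}|z_1 - z_2| \ll 1$, so both critical points $0$ and $-T^{\delta}(z_1-z_2)$ of $\phi$ are $O(1)$, while $F(v_1)$ constrains $v_1 \asymp 1$. Moreover, the auxiliary cutoff $1 - w(T^{-\epsilon}A_0^{1/3}T^{-\delta}v_1)$ inherited from $G$ is identically $1$ on $\text{supp}(F)$, since $T^{-\delta} \gg T^{\epsilon}A_0^{-1/3}$ by \eqref{deltasize}; hence the only additional restriction imposed by $H$ is $|v_1 + T^{\delta}(z_1 - z_2)| \gg A^{-1}$.

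First I would extract the derivative estimates. From the explicit formula \eqref{I1der}, using that $\phi_R$ is bounded away from zero on $\text{supp}(\mathcal{W})$ and that $v = T^{-\delta}v_1 + z_1 \asymp 1 + R^{-1}$, one obtains
\begin{equation*}
|\phi'(v_1)| \asymp A_0 T^{-3\delta}\,|v_1|\,|v_1 + T^{\delta}(z_1 - z_2)| \gg A_0 T^{-3\delta} A^{-1}
\end{equation*}
on $\text{supp}(H)$, where $A_0 = R^3\alpha/(R+1)^2$. For $j \geq 2$, the same kind of computation that produced \eqref{rootbded}, combined with \eqref{phider}, \eqref{denoder} and $T^{\delta}|z_1 - z_2| \ll 1$, yields $\phi^{(j)}(v_1) \ll_j A_0 T^{-3\delta}$. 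On the amplitude side, only the new cutoff $1 - w(A(v_1 + T^{\delta}(z_1 - z_2)))$ contributes derivatives larger than $O(1)$, giving $H^{(j)}(v_1) \ll_j A^j$.

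Finally I apply Lemma \ref{s1} with $X = 1$, $U = A^{-1}$, lower bound $R_{\mathrm{lb}} := A_0 T^{-3\delta}A^{-1}$, $Y = A_0 T^{-3\delta}$, and $Q = 1$. The two key ratios become
\begin{equation*}
\frac{QR_{\mathrm{lb}}}{\sqrt{Y}} = \frac{A_0^{1/2}T^{-3\delta/2}}{A} = T^{\epsilon}, \qquad R_{\mathrm{lb}} U = \frac{A_0 T^{-3\delta}}{A^2} = T^{2\epsilon},
\end{equation*}
by the choice $A = A_0^{1/2}T^{-3\delta/2-\epsilon}$; since both are $\gg T^{\epsilon}$, Lemma \ref{s1} delivers $I \ll_K T^{-K\epsilon}$ for any $K$, which is the desired negligibility. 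The principal technical obstacle I anticipate lies in confirming the $\ll_j 1$ derivative bounds for the composite factor $\omega_R(\alpha, T^{-\delta}v_1 + z_1)$: each $v_1$-derivative brings down an inner factor of $T^{-\delta}(R/(R+1))$ via \eqref{phider}, which is $O(1)$ thanks to the constraint $T^{-\delta} \ll 1 + R^{-1}$ from \eqref{deltasize}. This is precisely what keeps the amplitude derivatives controlled by the cutoff scale $A$, so that no oscillation is lost to the weight and Lemma \ref{s1} can be invoked cleanly.
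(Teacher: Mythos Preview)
Your approach is the same as the paper's: bound $|\phi'|$ from below on $\operatorname{supp}(H)$ by $A_0T^{-3\delta}A^{-1}$, bound $\phi^{(j)}\ll A_0T^{-3\delta}$ for $j\ge 2$, and invoke Lemma~\ref{s1}. Your observation that the inherited cutoff $1-w(T^{-\epsilon}A_0^{1/3}T^{-\delta}v_1)$ is identically $1$ on $\operatorname{supp}(F)$ is a genuine simplification over the paper, which carries the resulting (unnecessary) term $T^{-\delta}A_0^{1/3}$ in its amplitude bound.

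There is, however, a small gap in your claim $H^{(j)}\ll A^j$. By \eqref{defomegaR} the factor $\omega_R(\epsilon_1\alpha,v)$ is a product $U(v-R^{-1})\,\mathcal{W}(\phi_R(\epsilon_1\alpha,v))$. Your argument via \eqref{phider}, giving an inner factor $T^{-\delta}R/(R+1)\ll 1$ per $v_1$-derivative, applies only to the $\mathcal{W}$-part; the cutoff $U(v-R^{-1})$ contributes $T^{-\delta}$ per derivative with no compensating $R/(R+1)$, and this need not be $O(1)$ when $R\ll 1$ (indeed \eqref{deltasize} only gives $T^{-\delta}\ll 1+R^{-1}$, and one can check that $T^{-\delta}/A$ may be as large as $T^{1/2-O(\epsilon)}$). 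The correct bound is therefore $H^{(j)}\ll (A+T^{-\delta})^j$, as the paper (essentially) uses. The additional verification needed is $R_{\mathrm{lb}}/T^{-\delta}=A_0^{1/2}T^{-\delta/2+\epsilon}\gg 1$, which follows from $A_0^{1/3}T^{-\delta}\gg T^{\epsilon}$ (that is, \eqref{deltasize}) together with $A_0\gg T^{\epsilon}$. With this adjustment your argument is complete.
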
	
\begin{proof}
Using \eqref{denoder} ,\eqref{deltasize} and \eqref{I1der}, it is easy to see that for $v_1$ in the support of $1-w(A(v+T^{\delta}(z_1-z_2)))$, we have
\begin{equation}\notag
\phi'(v_1)\gg T^{\epsilon}A_0^{1/2} T^{-3\delta/2},\,\phi^{(j)}(v_1)\ll A_0 T^{-3\delta},\,\,j\geq 2,
\end{equation}and
\begin{equation}\notag
H^{(j)}(v_1)\ll_j (A+T^{-\delta}+T^{-\delta}A_0^{1/3})^j,\,\,j\geq 1.
\end{equation}Applying Lemma \ref{s1} with the parameters $(X, U^{-1})=(1, A+T^{-\delta}+T^{-\delta}A_0^{1/3})$, $W=T^{\epsilon}A_0 T^{-3\delta/2}$, $(Y,Q^{-1})=(A_0T^{-3\delta}, 1)$, we obtain
\begin{equation}\label{Ineg}
I\ll_{A} T^{-A\epsilon}\left(\frac{A_0^{1/2}T^{-3\delta/2}}{A+T^{-\delta}+T^{-\delta}A_0^{1/3}}\right)^{-A}+T^{-A\epsilon/2}.
\end{equation}Using $A= A_0^{1/2}T^{-3\delta/2-\epsilon}, T^{-\delta}\gg T^{\epsilon}A_0^{-1/3}$ and $A_0\gg T^{\epsilon}$, the term inside the parenthesis is easily seen to be $\gg 1$ and the claim follows.
\end{proof}
It follows from \eqref{I1dyadic} and Lemma \ref{I1local} that
\begin{equation}\label{I1localised}
I_1(\delta)\approx T^{-\delta}\int_{v_1}w(A(v_1+T^{\delta}(z_1-z_2)))G(v_1)e(\phi(v_1)/2\pi)\,dv_1.
\end{equation}Note that using \eqref{I1der}, \eqref{denoder} and \eqref{deltasize}, for $v_1$ in the support of $w(A(v_1+T^{\delta}(z_1-z_2)))G(v_1)$, we get
\begin{equation}\notag
\phi''(v_1)\gg A_0 T^{-3\delta}\left(1+O(A^{-1})\right)\asymp A_0 T^{-3\delta}
\end{equation}It is also clear that for $v_1$ in the support of $w(A(v_1+T^{\delta}(z_1-z_2)))G(v_1)$,
\begin{equation}\label{od}
\phi'(v_1)\ll T^{\epsilon}A_0^{1/2} T^{-3\delta/2},\,\,\,\phi^{(j)}(v_1)\ll_j  A_0 T^{-3\delta},\,\,j\geq 3.
\end{equation}Let $Y_0=A_0 T^{-3\delta}, Q_0=Y_0^{1/2}T^{-\epsilon}$. Since $Q_0\gg T^{3\epsilon}T^{-\epsilon}=T^{2\epsilon}$, the above inequalities implies that
\begin{equation}\notag
\phi^{(j)}(v_1)\ll_j Y_0 Q_0^{j-2}
\end{equation}holds for all $j\geq 1$. We also have
\begin{equation}\notag
\frac{\partial^j}{\partial v_1^j}w(A(v_1+T^{\delta}(z_1-z_2)))G(v_1)\ll_j (A+T^{-\delta}+T^{-\delta}A_0^{1/3})^j,
\end{equation}As earlier, plugging in $A= A_0^{1/2}T^{-3\delta/2-\epsilon}, T^{-\delta}\gg T^{\epsilon}A_0^{-1/3}$ and $A_0\gg T^{\epsilon}$, we can easily verify
\begin{equation}\notag
A+T^{-\delta}+T^{-\delta}A_0^{1/3}\ll T^{-\epsilon/2} Y_0^{1/2}
\end{equation}We are now ready to apply \ref{s2} to the integral $I_1(\delta)$ in  \eqref{I1localised} with the parameters, $ (X,V^{-1})=(1, A+T^{-\delta}+T^{-\delta}A_0^{1/3}), (Y, Q^{-1})= (Y_0Q_0^{-2}, Q_0)$, with the unique root $t_0=-T^{\delta}(z_1-z_2)$, to get the main term
\begin{equation}\notag
I_1(\delta)\sim T^{-\delta} \frac{e(\phi(t_0)/2\pi)}{\sqrt{\phi''(t_0)}}G(t_0).
\end{equation}Evaluating from \eqref{I1der}, we have
\begin{equation}\notag
\phi''(t_0)=\phi''(-T^{\delta}(z_1-z_2))=\left(-\frac{(\epsilon_2T_1-T_2)}{T_2\phi_R(\epsilon_1\alpha, z_2)}\right)\cdot \frac{R\alpha T^{-2\delta}(z_1-z_2)}{z_2^2}.
\end{equation}From their definitions, we also have
\begin{equation}\notag
\phi(t_0)=\phi (-T^{\delta}(z_1-z_2))=\psi(z_2)=-T_2\log\phi_R(\epsilon_1\alpha, z_2)+\epsilon_2T_1\log z_2,
\end{equation}and
\begin{equation}\notag
G(t_0)=G(-T^{\delta}(z_1-z_2))=F(T^{\delta}(z_2-z_1))z_2^{-1}\omega^{(\epsilon_1,\epsilon_2)}_R(\alpha, z_2).
\end{equation}
Substituting these expressions we obtain
\begin{equation}\notag
I_1(\delta)\sim \frac{1}{\alpha^{1/2}}\left(-\frac{(\epsilon_2T_1-T_2)}{T_2\phi_R(\epsilon_1\alpha, z_2)}\right)^{-1/2}\frac{F(T^{\delta}(z_2-z_1))\omega^{(\epsilon_1,\epsilon_2)}_R(\alpha, z_2)}{(R(z_1-z_2))^{1/2}}e(\psi(z_2)/2\pi).
\end{equation}Finally, summing over all the dyadic parts and substituting in \eqref{I1def}, we obtain
\begin{equation}\label{I1asymp}
I_1\sim \frac{1}{\alpha^{1/2}}\left(-\frac{(\epsilon_2T_1-T_2)}{T_2\phi_R(\epsilon_1\alpha, z_2)}\right)^{-1/2}\frac{\omega^{(\epsilon_1,\epsilon_2)}_R(\alpha, z_2)}{(R(z_1-z_2))^{1/2}}\left(\phi_{R}(\epsilon_1\alpha,z_2(\epsilon_1\alpha))\right)^{-iT_2}\left(z_2(\epsilon_1\alpha)\right)^{i\epsilon_2 T_1}.
\end{equation}This complete the analysis for the $I_1$ part.

Next consider the $I_2$ integral 
\begin{equation}\label{I2def2}
I_2=\int_{\mathbb{R}}w(T^{-\epsilon}A_0^{1/3}(v-z_1))v^{-1}\omega^{(\epsilon_1,\epsilon_2)}_R(\alpha,v)e(\psi(v)/2\pi) \,dv.
\end{equation}
Using \eqref{psider} and  \eqref{denoder}, for $v$ in the support of $w(T^{-\epsilon}A_0^{1/3}(v-z_1))\omega_R(\epsilon_1\alpha,v)$, we have
\begin{equation}\notag
\psi''(v)\asymp A_0 \Big(|z_1-z_2|+O(T^{\epsilon }A_0^{1/3})+O((R/(R+1))|z_1-z_2|T^{\epsilon }A_0^{-1/3}))\Big)
\end{equation}Since $|z_1-z_2|\gg T^{2\epsilon}A_0$ from \eqref{rootgap}, and $R/(R+1)A_0^{-1/3}T^{\epsilon}= T^{\epsilon}((R+1)\alpha)^{-1/3}\asymp T^{-1/3+\epsilon} $, it follows that
\begin{equation}\notag
\psi''(v)\asymp A_0|z_1-z_2|.
\end{equation}Furthermore, we have
\begin{equation}\notag
\psi'(v)\ll A_0 |z_1-z_2|T^{\epsilon}A^{-1/3}_0\ll A_0 |z_1-z_2|
\end{equation}since $A_0\gg T^{3\epsilon} $, and using \eqref{denoder} we have
\begin{equation}\label{3term}
\psi^{(j)}(v)\ll_j  A_0 |z_1-z_2|,\,\,j\geq 3.
\end{equation}For the weight function, we have
\begin{equation}\notag
\frac{\partial ^j}{\partial v^j}w(T^{-\epsilon}A_0^{1/3}(v-z_1))v^{-1}\omega_R(\epsilon_1\alpha,v)\ll_j (1+T^{-\epsilon}A_0^{1/3})^j,\,\,j\geq 1.
\end{equation}Since $|z_1-z_2|\gg T^{2\epsilon}A_0$ and $A_0\gg T^{3\epsilon}$, it follows that
\begin{equation}\notag
1+T^{-\epsilon}A_0^{1/3}\ll T^{-\epsilon}(A_0 |z_1-z_2|)^{1/2}.
\end{equation}Hence, we can apply Lemma \ref{s2} to the $I_2$ integral in \eqref{I2def2} with the parameters $ (X, V^{-1})= (1, 1+T^{-\epsilon}A_0^{1/3}), (Y, Q^{-1})=(A_0 |z_1-z_2|, 1)$ and with the unique root $z_1$, to get 
\begin{equation}\notag
I_2\sim \frac{e(\psi(z_1)/2\pi)}{\sqrt{\psi''(z_1)}}z_1^{-1}\omega_R(\alpha, z_1).
\end{equation}From \eqref{psider}, we get
\begin{equation}\notag
\psi''(z_1)= \left(\frac{(\epsilon_2T_1-T_2)}{T_2\phi_R(\epsilon_1\alpha, z_1)}\right)\frac{R\alpha (z_1-z_2)}{z_1^2}
\end{equation}Substituting the above and $\psi(z_1)=-T_2\log\phi_R(\epsilon_1\alpha, z_1)+\epsilon_2T_1\log z_1$, we obtain
\begin{equation}\label{I2asymp}
I_2\sim \frac{1}{\alpha^{1/2}}\left(\frac{(\epsilon_2T_1-T_2)}{T_2\phi_R(\epsilon_1\alpha, z_1)}\right)^{-1/2} \frac{\omega_R(\alpha, z_1)}{(R(z_1-z_2))^{1/2}}\left(\phi_{R}(\epsilon_1\alpha,z_1(\epsilon_1\alpha))\right)^{-iT_2}\left(z_1(\epsilon_1\alpha)\right)^{i\epsilon_2 T_1}
\end{equation}Proof upto \eqref{eq1123} of Lemma \ref{11.1} is now complete after combining \eqref{I1asymp} and \eqref{I2asymp}. It remains to prove \eqref{eq1126} to complete the proof of part \ref{asymptotic}.\\

Note that from \eqref{quadratic}, we have
\begin{equation}\label{roots}
z_1(\epsilon_1\alpha)=R^{-1}c_1\left(\frac{\pi U}{\alpha}\right)+\sqrt{f(\alpha)}, \,\,\,\,z_2(\epsilon_1\alpha)=R^{-1}c_1\left(\frac{\pi U}{\alpha}\right)-\sqrt{f(\alpha)}.
\end{equation}where \[f(\alpha)=R^{-2}\left(\frac{\pi U}{\alpha}\right)^2+R^{-2}c_0\] and  $c_1=\frac{\epsilon_2T_1}{\epsilon_1(\epsilon_2T_1-T_2)}\asymp 1$, $c_0=\frac{\epsilon_2T_1+T_2}{\epsilon_2T_1-T_1}\asymp 1$. It is easy to see that
\begin{equation}\label{fder}
\frac{\partial^j}{\partial\alpha^j}f(\alpha)\ll_j |\alpha|^{-j}R^{-2}\left(\frac{ U}{|\alpha|}\right)^2,\,\,j\geq 1.
\end{equation}Recall the hypothesis $f(\alpha)\asymp R^{-2}T^{-\delta}$. Using these informations, one deduces from the Fa\'a di Bruno's formula \ref{bruno}, that
\begin{equation}\notag
\frac{\partial^j}{\partial\alpha^j}\sqrt{f(\alpha)}\ll_j \sum_{\substack{j_1,j_2,\cdots,j_n\\}} (R^{-2}T^{-\delta})^{1/2-\sum j_i}\prod_{i=1}^{n} \left(|\alpha|^{-i}R^{-2}\left(\frac{ U}{|\alpha|}\right)^2\right)^{j_i},
\end{equation}where the sum is over all $n$-tuples of non-negative integers $(j_1,j_2,\cdots,j_n)$ satisfying
\begin{equation}\notag
1\cdot j_1+2\cdot j_2+3\cdot j_3+\cdots+n\cdot j_n=j.
\end{equation}Simplifying we obtain
\begin{equation}\notag
|\alpha|^j\frac{\partial^j}{\partial\alpha^j}\sqrt{f(\alpha)}\ll_j (R^{-2}T^{-\delta})^{1/2}\sum \left(T^{\delta}\left(\frac{U}{|\alpha|}\right)^2\right)^{\sum j_i}.
\end{equation}Since $\sum j_i\leq j$, we conclude
\begin{equation}\label{imgrootder}
|\alpha|^j\frac{\partial^j}{\partial\alpha^j}\sqrt{f(\alpha)}\ll_j (R^{-2}T^{-\delta})^{1/2} \left(1+T^{\delta}\left(\frac{U}{|\alpha|}\right)^2\right)^j.
\end{equation}On the other hand we have
\begin{equation}\label{realrootder}
|\alpha|^j\frac{\partial^j}{\partial\alpha^j}\left(R^{-1}\frac{\pi U}{\alpha}\right)\ll_j R^{-1}\frac{U}{|\alpha|}.
\end{equation}By the simple AM-GM inequality, the right hand side of \eqref{realrootder} gets     absorbed into \eqref{imgrootder}. Substituting in \eqref{roots} we obtain
\begin{equation}\label{rootder}
|\alpha|^j\frac{\partial^j}{\partial\alpha^j}z_i(\epsilon_1\alpha)\ll_j R^{-1}T^{-\delta/2} \left(1+T^{\delta}\left(\frac{U}{|\alpha|}\right)^2\right)^j,\,\,\,j\geq 1.
\end{equation}One can similarly show that
\begin{equation}\label{invrootder}
|\alpha|^j\frac{\partial^j}{\partial\alpha^j}z_i^{-1}(\epsilon_1\alpha)\ll_j R^{-1}T^{-\delta/2} \left(1+T^{\delta}\left(\frac{U}{|\alpha|}\right)^2\right)^j,\,\,\,j\geq 1.
\end{equation}Now
\begin{equation}\notag
\phi_R(\alpha, z_i(\epsilon_1\alpha))=-\frac{R\alpha}{T_2}\left(z_i(\epsilon_1\alpha)-\frac{1}{R^2z_i(\epsilon_1\alpha)}\right)-\frac{2\pi U}{T_2},
\end{equation}so that
\begin{equation}\notag
\begin{aligned}
\frac{\partial^j}{\partial\alpha^j}\phi_R(\alpha, z_i(\epsilon_1\alpha))=-\frac{R\alpha}{T_2}\frac{\partial^j}{\partial\alpha^j}&\left(z_i(\epsilon_1\alpha)-\frac{1}{R^2z_i(\epsilon_1\alpha)}\right)\\
&-\frac{R}{T_2}\frac{\partial^{j-1}}{\partial\alpha^{j-1}}\left(z_i(\epsilon_1\alpha)-\frac{1}{R^2z_i(\epsilon_1\alpha)}\right).
\end{aligned}
\end{equation}Substituting the bounds \eqref{rootder} and \eqref{invrootder} into above, we obtain
\begin{equation}\notag
\begin{aligned}
\frac{\partial^j}{\partial\alpha^j}\phi_R(\alpha, z_i(\epsilon_1\alpha))\ll_j T^{-\delta/2}(\alpha/T)|\alpha|^{-j}&(1+T^{\delta}(U/\alpha)^2)^j\\
&+T^{-\delta/2}T^{-1}|\alpha|^{-j+1}(1+T^{\delta}(U/\alpha)^2)^{j-1}.
\end{aligned}
\end{equation}Since $1+T^{\delta}(U/\alpha)^2>1$, we conclude
\begin{equation}\label{phiRder}
|\alpha|^j\frac{\partial^j}{\partial\alpha^j}\phi_R(\alpha, z_i(\epsilon_1\alpha))\ll_j T^{-\delta/2}(\alpha/T)(1+T^{\delta}(U/\alpha)^2)^j,\,\,j\geq 1.
\end{equation}Using the last inequality,\eqref{rootder} and the Fa\'a di Bruno's formula \ref{bruno}, we conclude that for \[\omega_R(\epsilon_1\alpha,z_i(\epsilon_1\alpha))=F(z_i(\epsilon_1\alpha))U(z_i(\epsilon_1\alpha)-R^{-1})\mathcal{W}\left(\phi_R(\epsilon_1\alpha, z_i(\epsilon_1\alpha))\right),\]
\begin{equation}\label{weightder}
|\alpha|^j\frac{\partial^j}{\partial\alpha^j}\omega_R(\epsilon_1\alpha,z_i(\epsilon_1\alpha))\ll_j \Big((1+T^{-\delta/2}(R^{-1}+\alpha/T))(1+T^{\delta}(U/\alpha)^2)\Big)^j,\,\,j\geq 1.
\end{equation}Finally,  since $(z_1(\epsilon_1\alpha)-z_2(\epsilon_1\alpha))=\sqrt{f(\alpha)}\asymp R^{-1}T^{-\delta/2}$, using the derivative bound \eqref{fder}, and the Fa\'a di Bruno's formula \ref{bruno}, we deduce
\begin{equation}\label{rootdiffder}
\begin{aligned}
&\frac{\partial^j}{\partial\alpha^j} (RT^{\delta/2}(z_1(\epsilon_1\alpha)-z_2(\epsilon_1\alpha)))^{-1/2}\ll_j \sum_{\substack{j_1,j_2,\cdots,j_n\\\sum kj_k=j}} \left(RT^{\delta/2}|\alpha|^{-k}R^{-2}(U/|\alpha|)^2\right)^{j_k}\\
&=|\alpha|^{-j}\sum_{\substack{j_1,j_2,\cdots,j_n\\\sum kj_k=j}}(R^{-1}T^{\delta/2}(U/|\alpha|)^2)^{\sum j_k}\ll |\alpha|^{-j}\Big(1+R^{-1}T^{\delta/2}(U/|\alpha|)^2\Big)^j
\end{aligned}
\end{equation}Combining \eqref{weightder} and \eqref{rootdiffder} we obtain
\begin{equation}\notag
|\alpha|^j\frac{\partial^j}{\partial\alpha^j}\left(\frac{\omega_R(\epsilon_1\alpha,z_i(\epsilon_1\alpha))}{RT^{\delta/2}(z_1(\epsilon_1\alpha)-z_2(\epsilon_1\alpha)))^{1/2}}\right)\ll_j \Big((1+T^{-\delta/2}(R^{-1}+\alpha/T))(1+T^{\delta}(U/\alpha)^2)\Big)^j.
\end{equation}The proof of \eqref{eq1126} is complete after observing that $\alpha/T\asymp (R+1)^{-1}$ and 
\begin{equation}\notag
T^{\delta}(U/\alpha)^2\ll 1+T^{\delta}
\end{equation}which is trivially true if $U/\alpha\ll 1$, and follows the definition $T^{-\delta}\asymp (\pi U/\alpha)^2-c_0$ if $U/\alpha\gg 1$.
\\

Let us proceed to prove part \ref{thirdderbd} \eqref{bd2}, which is the third derivative bounds for the integral $I_{R}^{(\epsilon_1,\epsilon_2)}(U,\alpha)$. Recall that
\begin{equation}\notag
I_{R}^{(\epsilon_1,\epsilon_2)}(U,\alpha)=\int_{v\in\mathbb{R}}v^{-1}\omega_R(\epsilon_1\alpha,v)e(\psi(v)/2\pi)\,dv
\end{equation}and from \eqref{psider}
\begin{equation}\label{psidernew}
\psi'(v)=\frac{R\alpha(\epsilon_2T_1-T_2)(v-z_1(\epsilon_1\alpha))(v-z_2(\epsilon_1\alpha))}{T_2v^2\phi_R(\epsilon_1\alpha,v))}.
\end{equation}The standing  assumptions for the case under consideration are
\begin{equation}\notag
R\gg T^{3\epsilon}\min\{\alpha^{-1}, T^{-1}\}\,\,\,\,\text{and}\,\,\,\,\,z_1(\epsilon_1\alpha)-z_2(\epsilon_1\alpha)\ll T^{2\epsilon}A_0^{-1/3},
\end{equation}where $A_0=R^3\alpha/(R+1)^2$. It is enough to work around one of root $z_1$(say) and furthermore, by introducing dyadic partition of unity, it is enough to prove the claim for each dyadic piece
\begin{equation}\notag
I(\delta):=\int_{v\in\mathbb{R}}F(T^{\delta}(v-\Re(z_1)))v^{-1}\omega_R(\epsilon_1\alpha,v)e(\psi(v)/2\pi)\,dv,
\end{equation}where we can assume $T^{-\delta}\ll 1+R^{-1}$ following from the earlier remark \eqref{deltasize}.
\begin{lemma}\label{•}
$I(\delta)$ is negligibly small unless 
\begin{equation}\notag
T^{-\delta}\ll T^{2\epsilon}A^{-1/3}_0,
\end{equation}for some large enough implied constant.
\end{lemma}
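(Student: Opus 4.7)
My plan is to show that under the negation of the conclusion, namely $T^{-\delta} \gg T^{2\epsilon} A_0^{-1/3}$ with a large implied constant, the phase $\psi(v)$ has no stationary point in the support of $F(T^{\delta}(v-\Re(z_1)))$ and in fact $|\psi'(v)|$ is so large relative to the higher derivatives that repeated integration by parts yields a negligibly small bound. The standing hypothesis of this part of the lemma is $|z_1(\epsilon_1\alpha)-z_2(\epsilon_1\alpha)| \ll T^{2\epsilon}A_0^{-1/3}$, so by the assumed lower bound on $T^{-\delta}$ we have $|z_1-z_2| \ll T^{-\delta-C}$ for any prescribed $C$. In particular, for $v \in \mathrm{supp}\,F(T^{\delta}(v-\Re(z_1)))$ (where $|v-\Re(z_1)| \asymp T^{-\delta}$), both $|v-z_1|$ and $|v-z_2|$ are $\asymp T^{-\delta}$.

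Substituting into the identity \eqref{psidernew}, and using the estimates $|\phi_R(\epsilon_1\alpha,v)| \asymp 1$ on the support of $\omega_R$, $v \asymp 1+R^{-1}$, and $R\alpha/(1+R^{-1})^2 \asymp A_0$, we obtain $|\psi'(v)| \asymp A_0 T^{-2\delta}$. For higher derivatives I would expand $\psi'(v) = c \cdot (v-z_1)(v-z_2)/(v^2\phi_R(\epsilon_1\alpha,v))$ with $c \asymp A_0$ and apply Leibniz together with the bounds \eqref{denoder} and the observation $T^{-\delta} \ll 1+R^{-1}$ (cf. \eqref{deltasize}), to conclude $\psi^{(j)}(v) \ll_j A_0 T^{-\delta}(1+R^{-1})^{j-2}$ for $j\ge 2$. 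Likewise, from the bounds on $\omega_R^{(j)}$ inherited from \eqref{phider} and the definition \eqref{defomegaR}, the weight $v^{-1}\omega_R(\epsilon_1\alpha,v)$ satisfies derivative bounds of size $(R/(R+1))^{j+1}\ll 1$.

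After the change of variable $v_1 = T^\delta(v-\Re(z_1))$, $I(\delta) = T^{-\delta}\int F(v_1)G(v_1)e(\phi(v_1)/2\pi)\,dv_1$ with $\phi(v_1)=\psi(T^{-\delta}v_1+\Re(z_1))$, so we have the crucial lower bound $|\phi'(v_1)| \gg A_0 T^{-3\delta}$ together with $\phi^{(j)}(v_1)\ll_j A_0 T^{-3\delta}(1+T^{-\delta}(1+R^{-1}))^{j-1}$ and $G^{(j)}(v_1)\ll_j 1$. By our assumption $T^{-\delta}\gg T^{2\epsilon} A_0^{-1/3}$ with a large enough constant, $A_0 T^{-3\delta}\gg T^{6\epsilon}$. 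Applying Lemma \ref{s1} with $R=A_0T^{-3\delta}$, $Y=A_0T^{-3\delta}$, $Q \asymp 1$, $X=1$, $U\asymp 1$, both terms $(QR/\sqrt{Y})^{-A}$ and $(RU)^{-A}$ are bounded by $(A_0T^{-3\delta})^{-A/2}\ll T^{-3A\epsilon}$, and the lemma follows.

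The only step that requires any care is the bookkeeping of the higher derivatives of $\psi$; everything else is a direct integration-by-parts argument paralleling the treatment of the analogous piece near the root $z_1$ earlier in the section. The point is that once we control the two factors $(v-z_1)(v-z_2) \asymp T^{-2\delta}$ and note that the denominator $v^2\phi_R$ has derivatives no larger than powers of $R/(R+1)\ll 1$, every derivative of $\psi$ picks up at most a factor of $T^{-\delta}+1+R^{-1}$, which is harmless after rescaling.
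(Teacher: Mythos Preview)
Your approach coincides with the paper's: assume $T^{-\delta}\gg T^{2\epsilon}A_0^{-1/3}$, obtain a lower bound on $|\psi'|$ on the support, and feed this into Lemma~\ref{s1}. The paper works directly in the variable $v$; your preliminary substitution $v_1=T^{\delta}(v-\Re z_1)$ is cosmetic.

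There is, however, a gap in the derivative bookkeeping. Writing $\psi'(v)=c\,(v-z_1)(v-z_2)/(v^2\phi_R)$, one has $c\asymp R\alpha=A_0(1+R^{-1})^2$ (not $c\asymp A_0$), and the term in which \emph{both} $v$-derivatives land on the quadratic numerator gives $\psi'''\asymp c\cdot 2/(v^2\phi_R)\asymp A_0$. The correct bounds are thus $\psi''\ll A_0T^{-\delta}$ and $\psi^{(j)}\ll A_0$ for $j\ge 3$; your stated $\psi^{(j)}\ll A_0T^{-\delta}(1+R^{-1})^{j-2}$ can undershoot this (e.g.\ $j=3$, $R\asymp 1$, $\delta>0$). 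After your change of variable this yields $\phi^{(j)}(v_1)=T^{-j\delta}\psi^{(j)}(v)\ll A_0T^{-j\delta}$ for $j\ge 3$, which is $\le A_0T^{-3\delta}$ only when $\delta\ge 0$; similarly $\partial_{v_1}^j\bigl(v^{-1}\omega_R\bigr)\ll (R/(R+1))T^{-j\delta}$, so $G^{(j)}(v_1)\ll 1$ also requires $\delta\ge 0$. But $\delta<0$ is permitted: from \eqref{deltasize} one only has $T^{-\delta}\ll 1+R^{-1}$, which may exceed $1$ when $R\ll 1$. In that regime your choice $(Y,Q,U)\asymp(A_0T^{-3\delta},1,1)$ does not verify the hypotheses of Lemma~\ref{s1}. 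The paper handles both signs of $\delta$ at once by staying in the $v$-variable and taking $(X,U^{-1})=(1,1+T^{\delta})$, $(Y,Q^{-1})=\bigl(A_0T^{-\delta}\min\{1,T^{-2\delta}\},\max\{1,T^{\delta}\}\bigr)$, which gives $QR/\sqrt{Y}=A_0^{1/2}T^{-3\delta/2}$ and $RU=A_0T^{-2\delta}/(1+T^{\delta})$, both $\gg T^{\epsilon}$ uniformly.
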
	
\begin{proof}
Suppose that for some large enough implied constant,
\begin{equation}\notag
T^{-\delta}\gg T^{2\epsilon}A^{-1/3}_0.
\end{equation}Then from \eqref{psidernew}, for $v$ in the support of $F(T^{\delta}(v-\Re(z_1)))\omega^{(\epsilon_1,\epsilon_2)}_R(\alpha,v)$,
\begin{equation}\notag
\psi'(v)\gg A_0 T^{-2\delta}.
\end{equation}Furthermore, using \eqref{denoder} and $T^{-\delta}\ll 1+R^{-1}$, we get
\begin{equation}\notag
\begin{aligned}
\psi''(v)&\ll A_0 \Big(T^{-\delta}+T^{-2\delta}(R/(R+1))\Big)\ll A_0T^{-\delta}\\
\psi^{(j)}(v)&\ll_j A_0 \Big(T^{-2\delta}(R/(R+1))^{j-1}+(R/(R+1))^{j-2}T^{-\delta}+(R/(R+1))^{j-3}\Big)\\
&\ll A_0 ,\,\,j\geq 3.
\end{aligned}
\end{equation} We also have
\begin{equation}\notag
\frac{\partial ^j}{\partial v^j}F(T^{\delta}(v-\Re(z_1)))v^{-1}\omega_R(\epsilon_2\alpha,v)\ll_j (1+T^{\delta})^j,\,j\geq 1.
\end{equation}Applying Lemma \ref{s1} with $(X,U^{-1})=(1,1+T^{\delta}), W=A_0T^{-2\delta}$ and \\
$(Y, Q^{-1})=(A_0T^{-\delta} \min\{1, T^{-2\delta}\}, \max\{1, T^{\delta}\})$, we obtain
\begin{equation}\notag
I(\delta)\ll_A \left(\frac{A_0T^{-2\delta}}{1+T^{\delta}}\right)^{-A}+\left(\frac{A_0T^{-2\delta}}{A_0^{1/2}T^{-\delta/2}}\right)^{-A}
\end{equation}As earlier, using the $T^{-\delta}\gg T^{2\epsilon}A_0^{-1/3}$ and the basic inequality $A_0\gg T^{\epsilon}$, one can verify that both the term inside the parenthesis above is $\gg T^{\epsilon}$. The lemma follows.
\end{proof} Assuming $T^{-\delta}\ll T^{2\epsilon}A_0^{-1/3}$, trivially executing the $v$ integral one obtains
\begin{equation}\notag
I(\delta)\ll T^{-\delta}\sup {v^{-1}\omega_R(\alpha,v)}\ll T^{2\epsilon}A_0^{-1/3}(R/(R+1))=T^{2\epsilon} ((R+1)\alpha)^{-1/3}
\end{equation}This completes the proof of part \ref{thirdderbd}.\\

It remains to prove part \ref{negsmall}. Recall
\begin{equation}\notag
I_{R}^{(\epsilon_1,\epsilon_2)}(U,\alpha)=\int_{\mathbb{R}}v^{-1}\omega_R(\epsilon_1\alpha,v) e\left(\frac{\psi(v)}{2\pi}\right)\,dv,
\end{equation}where
\begin{equation}\notag
\psi(v)=-T_2\log\phi_R(\epsilon_1\alpha,v)+\epsilon_2T_1\log v,\,\,\phi_R(\alpha,v):=-2\pi T_2^{-1}\left(\frac{\alpha(Rv-R^{-1}v^{-1})}{2\pi}+U\right),
\end{equation}and $\omega_R(\epsilon_1\alpha,v)=U(v-R^{-1})\mathcal{W}\left(\phi(\epsilon_1\alpha,v)\right)$ for some compactly supported smooth functions $U,\mathcal{W}$ with bounded derivatives. Now,
\begin{equation}\label{sider}
\begin{aligned}
\psi'(v)&=\frac{\epsilon_1\alpha(R+(Rv^2)^{-1})}{\phi_R(\alpha,v)}+\frac{\epsilon_2T_1}{v}\\
&=\frac{\epsilon_1\alpha(Rv+(Rv)^{-1})-(\epsilon_2T_1/T_2)(2\pi U)-(\epsilon_2T_1/T_2)\alpha(Rv-(Rv)^{-1})}{v\phi_R(\alpha, v)}.
\end{aligned}
\end{equation}Since $R\ll T^{-\epsilon}, Rv=1+O(R)$, it follows
\begin{equation}\label{npsider}
\psi'(v)=\frac{2\epsilon_1\alpha-(\epsilon_2T_1/T_2)(2\pi U)+O(R\alpha)}{v\phi_R(\alpha, v)}
\end{equation}Also, using the fact that  $\frac{\partial^j}{\partial v^j}(1/v\phi_R(\alpha,v))\ll_j (R/(R+1))^{j+1}$, which can be proved in a similar fashion as \eqref{denoder}, it is easy to see that
\begin{equation}\notag
\psi^{(j)}(v)\ll_j R^2\alpha,\,\,j\geq 2.
\end{equation}So if $|\epsilon_1\alpha -(\epsilon_2T_1/T_2)(2\pi U)|\gg T^{\epsilon}(R\alpha+R^{-1})$, then
\begin{equation}\notag
\psi'(v)\gg T^{\epsilon}(1+R^2\alpha).
\end{equation}Appealing to Lemma \ref{s1} with the parameters $(X,U^{-1})=(1,1), W=T^{\epsilon}(1+R^2\alpha)$ and $ (Y,Q^{-1})=(R^2\alpha, 1)$, it follows that $I_{R}^{(\epsilon_1,\epsilon_2)}(U,\alpha)$ is negligibly small.

Now in the case $|\epsilon_1\alpha -(\epsilon_2T_1/T_2)(2\pi U)|\ll T^{\epsilon}(R\alpha+R^{-1})$, from \eqref{npsider} we get
\begin{equation}\notag
\psi'(v)=\frac{\epsilon_1\alpha(R+(Rv^2)^{-1})}{\phi_R(\alpha,v)}+\frac{\epsilon_2T_1}{v}\ll T^{\epsilon}(1+R^2\alpha),
\end{equation}and from \eqref{sider} we have
\begin{equation}\notag
\begin{aligned}
\psi''(v)&=\frac{(\alpha^2/T_2)(R+(Rv^2)^{-1})^2}{\phi_R^2(\epsilon_1\alpha, v)}-\frac{(2\epsilon_1\alpha)/(Rv^3)}{\phi_R(\epsilon_1\alpha,v)}-\frac{\epsilon_2T_1}{v^2}\\
&=\frac{(\alpha^2/T_2)(R+(Rv^2)^{-1})^2}{\phi_R^2(\epsilon_1\alpha, v)}-v^{-1}\left(\frac{\epsilon_1\alpha(R+(Rv^2)^{-1})}{\phi_R(\alpha,v)}+\frac{\epsilon_2T_1}{v}+O(R^2\alpha)\right)\\
&\gg \left|\frac{R^2\alpha^2}{T}-T^{\epsilon}(R+R^3\alpha)\right|\gg R^2\alpha^2/T\asymp R^2\alpha,
\end{aligned}
\end{equation}since we are in the range $\min\{T^{-1},\alpha^{-1}\}\ll R\ll T^{-\epsilon}$ and $T/\alpha\asymp 1+R\asymp 1$. Finally, observing that $v^{-1}\omega_R(\epsilon_1\alpha,v)\ll R, \frac{\partial }{\partial v}(v^{-1}\omega_R(\epsilon_1\alpha,v))\ll R $, and using the second derivative bound we obtain
\begin{equation}\notag
I_{R}^{(\epsilon_1,\epsilon_2)}(U,\alpha)\ll R (R^2\alpha)^{-1/2}=\alpha^{-1/2}.
\end{equation}

\end{proof}

\section*{Acknowledgements} The author would like to thank Prof. Ritabrata Munshi for his encouragement, suggestions and support throughout the work.

\bibliographystyle{abbrv}

\end{document}